
\documentclass[12pt]{amsart}
\headheight=8pt     \topmargin=0pt
\textheight=624pt   \textwidth=432pt
\oddsidemargin=18pt \evensidemargin=18pt

\usepackage{amssymb}
\usepackage{verbatim}
\usepackage{hyperref}
\usepackage{color}

\begin{document}

\newtheorem{theorem}{Theorem}    
\newtheorem{proposition}[theorem]{Proposition}
\newtheorem{conjecture}[theorem]{Conjecture}
\def\theconjecture{\unskip}
\newtheorem{corollary}[theorem]{Corollary}
\newtheorem{lemma}[theorem]{Lemma}
\newtheorem{sublemma}[theorem]{Sublemma}
\newtheorem{fact}[theorem]{Fact}
\newtheorem{observation}[theorem]{Observation}
\theoremstyle{definition}
\newtheorem{definition}[theorem]{Definition}
\newtheorem{notation}[theorem]{Notation}
\newtheorem{remark}[theorem]{Remark}
\newtheorem{question}[theorem]{Question}
\newtheorem{questions}[theorem]{Questions}
\newtheorem{example}[theorem]{Example}
\newtheorem{problem}[theorem]{Problem}
\newtheorem{exercise}[theorem]{Exercise}

\numberwithin{theorem}{section}
\numberwithin{equation}{section}

\def\reals{{\mathbb R}}
\def\torus{{\mathbb T}}
\def\integers{{\mathbb Z}}
\def\naturals{{\mathbb N}}
\def\complex{{\mathbb C}\/}
\def\distance{\operatorname{distance}\,}
\def\support{\operatorname{support}\,}
\def\dist{\operatorname{dist}\,}
\def\Span{\operatorname{span}\,}
\def\degree{\operatorname{degree}\,}
\def\dim{\operatorname{dim}\,}
\def\codim{\operatorname{codim}}
\def\trace{\operatorname{trace\,}}
\def\Span{\operatorname{span}\,}
\def\dimension{\operatorname{dimension}\,}
\def\codimension{\operatorname{codimension}\,}
\def\nullspace{\scriptk}
\def\kernel{\operatorname{Ker}}
\def\Re{\operatorname{Re\,} }
\def\Im{\operatorname{Im\,} }
\def\eps{\varepsilon}
\def\lt{L^2}
\def\diver{\operatorname{div}}
\def\curl{\operatorname{curl}}
\def\expect{\mathbb E}
\def\bull{$\bullet$\ }
\def\det{\operatorname{det}}
\def\Det{\operatorname{Det}}

\newcommand{\norm}[1]{ \|  #1 \|}
\newcommand{\Norm}[1]{ \Big\|  #1 \Big\| }
\newcommand{\set}[1]{ \left\{ #1 \right\} }
\def\one{{\mathbf 1}}
\newcommand{\modulo}[2]{[#1]_{#2}}
\newcommand{\abr}[1]{ \langle  #1 \rangle}

\renewcommand{\qed}{\hfill \mbox{\raggedright \rule{0.1in}{0.1in}}}

\def\bp{\mathbf p}
\def\bff{\mathbf f}
\def\bg{\mathbf g}

\def\b1{\mathbf 1}
\def\bi{\mathbf i}
\def\bj{\mathbf j}
\def\bk{\mathbf k}

\def\bu{\mathbf u}
\def\bv{\mathbf v}
\def\bw{\mathbf w}
\def\bx{\mathbf x}
\def\by{\mathbf y}

\def\scriptf{{\mathcal F}}
\def\scriptg{{\mathcal G}}
\def\scriptm{{\mathcal M}}
\def\scriptb{{\mathcal B}}
\def\scriptc{{\mathcal C}}
\def\scriptt{{\mathcal T}}
\def\scripti{{\mathcal I}}
\def\scripte{{\mathcal E}}
\def\scriptv{{\mathcal V}}
\def\scriptw{{\mathcal W}}
\def\scriptu{{\mathcal U}}
\def\scriptS{{\mathcal S}}
\def\scripta{{\mathcal A}}
\def\scriptr{{\mathcal R}}
\def\scripto{{\mathcal O}}
\def\scripth{{\mathcal H}}
\def\scriptd{{\mathcal D}}
\def\scriptl{{\mathcal L}}
\def\scriptn{{\mathcal N}}
\def\scriptp{{\mathcal P}}
\def\scriptk{{\mathcal K}}
\def\scriptP{{\mathcal P}}
\def\scriptj{{\mathcal J}}
\def\frakv{{\mathfrak V}}
\def\frakG{{\mathfrak G}}
\def\frakA{{\mathfrak A}}

\author{Marcos Charalambides}
\address{
        Marcos Charalambides\\
        Department of Mathematics\\
        University of California \\
        Berkeley, CA 94720-3840, USA}
\email{marcos@math.berkeley.edu}

\thanks{Research supported in part by NSF grant DMS-0901569.}

\date{February 2nd, 2014}

\title
[Distinct distances on curves via rigidity]
{Distinct distances on curves via rigidity}

\maketitle
\begin{abstract}
It is shown that $N$ points on a real algebraic curve of degree $n$ in $\reals^d$ always determine $\gtrsim_{n,d}N^{1+\frac{1}{4}}$ distinct distances, unless the curve is a straight line or the closed geodesic of a flat torus. In the latter case, there are arrangements of $N$ points which determine $\lesssim N$ distinct distances. The method may be applied to other quantities of interest to obtain analogous exponent gaps. An important step in the proof involves understanding the structural rigidity of certain frameworks on curves.
\end{abstract}

\section{Introduction}

Let $P\subset\reals^2$ be a finite set. Consider the set \[\Delta(P):=\{\norm{p-q}\,\,|\,\,p,q\in P\}\] of distances deteremined by $P$. A famous problem, posed by Erd\"{o}s \cite{erdosdist}, is to determine a sharp asymptotic lower bound on the cardinality $|\Delta(P)|$ of this set as a function of the cardinality $|P|$ of the set $P$. 

\begin{conjecture}[Erd\H{o}s]
Let $P\subset\reals^2$ be a finite subset. Then, \[|\Delta(P)|\gtrsim\frac{|P|}{\sqrt{\log |P|}}.\]
\end{conjecture}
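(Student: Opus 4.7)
The plan is to follow the Elekes--Sharir framework, as refined by Guth and Katz, which reduces the problem to an incidence question in three dimensions. For each ordered pair $(p,q)\in P\times P$ one associates the one-parameter family of orientation-preserving rigid motions of $\reals^2$ mapping $p$ to $q$; parametrizing such motions by translation vector and rotation angle, this family traces out a line $\ell_{p,q}$ in $\reals^3$. A quadruple $(p,q,p',q')$ with $\norm{p-q}=\norm{p'-q'}$ corresponds to an isometry sending one pair to the other, and hence to an intersection between $\ell_{p,q}$ and $\ell_{p',q'}$. By Cauchy--Schwarz applied to the multiplicity function $d\mapsto\#\{(p,q):\norm{p-q}=d\}$, a lower bound on $|\Delta(P)|$ will follow from an upper bound on the number of such quadruples, i.e.\ on incidences in the resulting system of $\sim |P|^2$ lines.

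Next I would establish the requisite incidence bound via polynomial partitioning. Following Guth--Katz, one selects a polynomial $f$ of controlled degree whose zero set $Z(f)$ partitions $\reals^3$ into open cells, each meeting only few of the $\ell_{p,q}$. Incidences inside cells are then controlled by Szemer\'edi--Trotter, while incidences on $Z(f)$ are handled by algebraic arguments: lines lying on an irreducible ruled surface are analyzed through flecnodal and doubly ruled loci, and one uses the structural feature of the Elekes--Sharir construction (no line of $\reals^3$ supports too many of the $\ell_{p,q}$) to prevent a single regulus from absorbing too much of the incidence budget. Together these inputs yield $|\Delta(P)|\gtrsim |P|/\log|P|$.

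The hard part will be closing the remaining $\sqrt{\log|P|}$ gap. The extremal configuration, the $\sqrt{|P|}\times\sqrt{|P|}$ integer grid, achieves $\asymp |P|/\sqrt{\log|P|}$ distinct distances through the distribution of representations as a sum of two squares, so the conjecture is tied to additive-arithmetic structure rather than purely algebraic-geometric structure. Bridging this logarithmic gap appears to demand an incidence inequality that is sharp rather than sharp-up-to-logarithms, perhaps via a refined Cauchy--Schwarz step that isolates the arithmetic contribution, or through an entirely new ingredient; this last improvement remains beyond current techniques and constitutes the still-open core of the problem. For this reason I would not expect the approach above to settle the conjecture in its full strength; at best it recovers the Guth--Katz bound, and a genuinely new idea seems to be needed to reach $|P|/\sqrt{\log|P|}$.
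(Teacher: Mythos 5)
You have correctly recognized that this statement is a \emph{conjecture}, not a theorem, and the paper offers no proof of it (nor does anyone else); the best known lower bound is the Guth--Katz result $|\Delta(P)|\gtrsim |P|/\log|P|$, which the paper records immediately afterward as Theorem~\ref{thm:guthkatz}. Your summary of the Elekes--Sharir reduction to incidences of lines in $\reals^3$, the polynomial-partitioning argument, and the obstruction posed by the $\sqrt{\log|P|}$ gap (the integer grid and sums of two squares) is accurate, and your conclusion that closing this gap requires a genuinely new, likely arithmetic, idea is the correct assessment of the current state of the problem. Since there is no proof in the paper to compare against, no further comparison is possible; the honest non-proof you give is the appropriate response.
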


Recently, Guth and Katz have proven the following celebrated (almost-sharp) result.

\begin{theorem}[Guth-Katz \cite{guthkatz}]\label{thm:guthkatz}
Let $P\subset\reals^2$ be a finite subset. Then, \[|\Delta(P)|\gtrsim\frac{|P|}{\log |P|}.\]
\end{theorem}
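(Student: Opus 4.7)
The plan is to follow the Elekes--Sharir framework, reducing the distinct-distance problem to an incidence problem for lines in $\reals^3$, and then to attack that incidence problem via the polynomial partitioning method. First I would apply Cauchy--Schwarz to the distance distribution: if the pairs from $P$ realize only $|\Delta(P)|$ distinct values, then the number $Q$ of ordered quadruples $(p,q,p',q')\in P^4$ with $\|p-q\|=\|p'-q'\|\neq 0$ satisfies $Q\gtrsim |P|^4/|\Delta(P)|$, so it suffices to prove the upper bound $Q\lesssim |P|^3\log|P|$.

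Next I would translate quadruples into rigid motions. To each such quadruple there corresponds a unique orientation-preserving rigid motion $g$ of the plane with $g(p)=p'$ and $g(q)=q'$. Parametrizing the three-dimensional group of rigid motions by $\reals^3$ (rotation angle and rotation centre, with pure translations as a degenerate limit), the set of motions sending a fixed $p$ to a fixed $p'$ becomes a line $\ell_{p,p'}\subset\reals^3$. Then $Q$ is controlled by $\sum_{x}\binom{m(x)}{2}$, where $m(x)$ counts how many of the $N^2$ lines $\ell_{p,p'}$ pass through $x$. The problem is now to bound $k$-rich points in an arrangement of $N^2$ lines in $\reals^3$ that carries additional structure inherited from its rigid-motion origin.

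The central tool is polynomial partitioning. For a parameter $D$ to be optimized, I would choose a polynomial $f$ of degree $\leq D$ whose zero set partitions $\reals^3$ into $O(D^3)$ open cells, each meeting at most $O(N^2/D^2)$ of the lines; within each cell a Szemer\'edi--Trotter style bound in three dimensions gives an acceptable contribution. Lines transverse to the zero set $Z(f)$ meet it in only $O(D)$ points, so they are harmless as well. The delicate case is that of lines lying entirely inside $Z(f)$.

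The main obstacle, and the deepest step, is controlling this contribution on $Z(f)$. I would invoke a Cayley--Salmon-type structural result: an irreducible algebraic surface containing more than $O(D^2)$ lines must be a plane or a doubly ruled surface (a quadric, or a certain ruled cubic). I would then use the rigidity of the Elekes--Sharir construction to show that no plane and no regulus can contain more than $O(N)$ of the lines $\ell_{p,p'}$, since such a coincidence would force $P$ to possess symmetries ruled out by a direct geometric argument about composition of rigid motions. This structural lemma is the crux of the argument. Combining the cell contribution, the transverse-intersection contribution, and the planar/regulus bound, and optimizing $D$ suitably, yields $Q\lesssim N^3\log N$, and therefore $|\Delta(P)|\gtrsim N/\log N$, as desired.
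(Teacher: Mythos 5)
The paper does not prove Theorem~\ref{thm:guthkatz}; it is imported as a black box from Guth and Katz \cite{guthkatz}, and the paper's own arguments never revisit its proof. Your outline is a faithful high-level summary of the actual Guth--Katz argument: the Cauchy--Schwarz reduction to counting distance quadruples, the Elekes--Sharir translation into an incidence problem for a family of $N^2$ lines in the three-dimensional group of rigid motions, polynomial partitioning to isolate the cell, transverse, and surface contributions, and the Cayley--Salmon/ruled-surface structure theorem together with the rigidity lemma bounding the number of these special lines in any plane or regulus by $O(N)$. The one place where your sketch glosses over a genuine technicality is the final optimization: the bound $Q\lesssim N^3\log N$ does not drop out of a single choice of partitioning degree $D$; one must sum the $k$-rich point counts over dyadic ranges of $k$ (and handle $k=2$ separately from $k\ge 3$), and it is this dyadic summation that produces the logarithmic loss. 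That said, this is the standard proof, and since the paper under review simply cites the theorem, there is nothing in the paper to compare your approach against.
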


\begin{remark}
The exponent $1$ of $|P|$ in the lower bound is sharp; for example a set of $N$ equally spaced points on a circle or a straight line determines $\le N$ distinct distances.
\end{remark}

When considering the inverse problem of describing arrangements of points which determine few distinct distances, one question which arises is whether these arrangements have algebro-geometric structure. In this article, we look at whether arrangements of points in $\reals^2$ and, more generally, $\reals^d$ which are known to lie on an algebraic curve of fixed degree can determine too few distinct distances. We explore a link between the algebraic geometry of the problem and the structural rigidity of certain frameworks on the curve and, with this interpretation, we are able to show that, unless the curve has a very specific form which we can describe explicitly, a finite set of points lying on the curve cannot determine too few distinct distances.

\begin{remark}\label{remark:distdegen}
Even without any additional assumptions, finite subsets of algebraic curves cannot determine too few distances. Indeed, let $\Gamma$ be an algebraic curve of degree $n$ in $\reals^2$. Then $\Gamma$ intersects any circle which does not contain $\Gamma$ in at most $2n$ points. Consequently, every point in $P$ determines at least $\frac{|P|-1}{2n}$ distinct distances with the other points of $P$. (The author thanks the second anonymous referee for this argument.)
\end{remark}

\subsection{Helices}

We now describe a class of real analytic curves supporting finite subsets which determine few distinct distances.

\begin{definition}[Generalized helix]\label{def:helix}
Let $d>0$, $k,l\ge 0$ and $l+2k\le d$. Let $A$ be a real invertible skew-symmetric $2k\times 2k$ matrix, $v\in\reals^{2k}$ and $w\in\reals^l$. A \emph{generalized helix} is a real-analytic curve $\Gamma$ in $\reals^d$ parametrized by $\gamma:I\to\reals^d$ for a non-empty open interval $I\subset\reals$ which, up to rigid motions, is given by \[\gamma(t)=(\exp(At)v, tw, 0)\in\reals^{2k}\times\reals^l\times\reals^{d-l-2k}=\reals^d.\]
\end{definition}

Generalized helices with $l=0$ are (up to rigid motions) the geodesics of a $k$-dimensional flat torus parametrized by \[(u_1, \ldots, u_k)\mapsto(\alpha_1\cos u_1, \alpha_1\sin u_1, \ldots, \alpha_k\cos u_k, \alpha_k\sin u_k)\in\reals^{2k}\subset\reals^d\] for some $\alpha_1, \ldots, \alpha_k >0$. The flat torus is the embedded $k$-dimensional submanifold of $\reals^{2k}$ obtained by taking a $k$-fold product of circles $S^1\subset\reals^2$.

A generalized helix is a real algebraic curve if and only if either $k=0$ and $l>0$ (in other words, it is a straight line) or, alternatively, $k>0$, $l=0$ and the curve is a geodesic of a $k$-dimensional flat torus which is closed. This means that it has a parametrization of the form \[
\gamma(t)=(\alpha_1\cos \lambda_1t, \alpha_1\sin \lambda_1 t, \ldots, \alpha_k\cos \lambda_kt, \alpha_k\sin \lambda_k t).
\]
with each ratio $\lambda_j/\lambda_i$ rational; see Lemma~\ref{lemma:algebraichelix}. We will refer to such a curve as an \emph{algebraic helix}.

\begin{remark}
In $\reals^2$ (and $\reals^3$), algebraic helices are straight lines and circles.
\end{remark}

Algebraic helices, and in fact generalized helices, support subsets which determine few distinct distances.

\begin{theorem}
Let $\Gamma$ be a generalized helix in $\reals^d$. Then, for any integer $N>0$ there exists a finite subset $P$ of $\Gamma$ such that $|P|=N$ and the number of distinct distances determined by $P$ is $\lesssim N$.
\end{theorem}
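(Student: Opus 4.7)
The plan is to exploit the one-parameter-group structure underlying a helix. I expect the distance $\|\gamma(s)-\gamma(t)\|$ to depend only on the difference $s-t$, because the curve is a one-parameter orbit of commuting isometries of $\reals^d$. Granted this translation-invariance, an $N$-term arithmetic progression of parameter values will give a set of $N$ points whose $\binom{N}{2}$ pairwise distances collapse to at most $N-1$ distinct values, which is $\lesssim N$.

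\medskip

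\emph{Step 1 (translation-invariance of the distance).} From the parametrization $\gamma(t)=(\exp(At)v,\ tw,\ 0)$ I would write
\[\|\gamma(s)-\gamma(t)\|^{2}=\|\exp(As)v-\exp(At)v\|^{2}+(s-t)^{2}\|w\|^{2}.\]
Since $A$ is skew-symmetric, $\exp(At)$ is orthogonal, and using $\exp(As)=\exp(At)\exp(A(s-t))$ the first term becomes $\|\exp(A(s-t))v-v\|^{2}$. Thus $\|\gamma(s)-\gamma(t)\|^{2}=F(s-t)$ for a single function $F$ of one real variable.

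\medskip

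\emph{Step 2 (choose the point set).} Fix some $t_{0}\in I$ and a small $\delta>0$, and set $t_{i}=t_{0}+i\delta$ for $i=1,\dots,N$. Take $\delta$ small enough that all $t_{i}$ lie in $I$. Then $\|\gamma(t_{i})-\gamma(t_{j})\|^{2}=F((i-j)\delta)$, which takes at most $N-1$ distinct values as $i\ne j$ range over $\{1,\dots,N\}$. So if $P=\{\gamma(t_{1}),\dots,\gamma(t_{N})\}$ is a set of $N$ distinct points, we are done.

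\medskip

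\emph{Step 3 (distinctness of the $\gamma(t_{i})$).} This is the one subtlety. If $l>0$, then $w\neq0$, so the projection $t\mapsto tw$ is injective on $\reals$ and $\gamma$ is globally one-to-one, so distinctness is automatic. If $l=0$, the set $H=\{s\in\reals:\exp(As)v=v\}$ is a closed subgroup of $\reals$; since $A$ is invertible and $v$ can be taken nonzero (else the helix is trivial), $\gamma$ is non-constant, so $H\ne\reals$, forcing $H=\{0\}$ or $H=c\integers$ for some $c>0$. In the first case $\gamma$ is injective; in the second, any $\delta\in(0,c/(N-1))$ makes the $(i-j)\delta$ avoid $H\setminus\{0\}$ for $i\ne j$, so the $\gamma(t_{i})$ are distinct. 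In either subcase a small enough $\delta$ suffices, finishing the argument. The only ``hard'' step is really the bookkeeping in Step 3; the heart of the matter is the orthogonality calculation in Step 1.
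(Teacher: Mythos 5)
Your argument is correct and follows the same overall strategy as the paper: take an arithmetic progression of parameter values and observe that the distance between two points on a generalized helix depends only on the parameter difference. Your Step 1 is in fact cleaner and more careful than the paper's. The paper invokes a vector-valued mean-value identity
\[
\norm{(\exp(Ax)-\exp(Ay))v}^2=(x-y)^2\norm{\exp(A\xi)Av}^2
\]
for some $\xi$ between $x$ and $y$, which (after using orthogonality of $\exp(A\xi)$) would force $\norm{\gamma(x)-\gamma(y)}$ to be \emph{linear} in $|x-y|$; this is false in general, as the unit circle $\gamma(t)=(\cos t,\sin t)$ already shows, where the left side equals $4\sin^2\big(\tfrac{x-y}{2}\big)$ while the right side equals $(x-y)^2$. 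Your algebraic rewriting $\exp(Ax)-\exp(Ay)=\exp(Ay)\big(\exp(A(s-t))-I\big)$, together with orthogonality of $\exp(Ay)$, yields exactly the translation-invariance $\norm{\gamma(s)-\gamma(t)}^2=F(s-t)$, which is all the conclusion actually needs, and it does so without the spurious linearity. Your Step 3 also addresses the distinctness of the $N$ chosen points, a point the paper's proof leaves implicit even though $t\mapsto\exp(At)v$ can be periodic; the only small slip there is that the correct dichotomy is $w=0$ versus $w\neq 0$ rather than $l=0$ versus $l>0$ (the definition permits $l>0$ with $w=0$, which collapses to the periodic case), but your argument applies verbatim after that relabeling.
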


\begin{proof}
Let $\Gamma$ be given by \[\gamma(t)=(\exp(At)v, tw, 0).\] Let $Q\subset\reals$ be any finite arithmetic progression of cardinality $N$. Then for $x, y\in Q$, 
\begin{eqnarray}
\norm{\gamma(x)-\gamma(y)}^2&=&\norm{(\exp(Ax)-\exp(Ay))v}^2+(x-y)^2\norm{w}^2\nonumber\\
&=&(x-y)^2\big(\norm{\exp(A\xi)Av}^2+\norm{w}^2\big)\nonumber
\end{eqnarray} for some $\xi$ in the interval $[\min\{x,y\}, \max\{x, y\}]$. Since $A$ is skew-symmetric, $\exp(A\xi)$ is orthogonal and
\begin{equation}
\norm{\exp(A\xi)Av}^2=\norm{Av}^2.\nonumber
\end{equation}
Hence
\[
\norm{\gamma(x)-\gamma(y)}=|x-y|(\norm{Av}^2+\norm{w}^2)^{\frac{1}{2}}.
\]

Consequently, the set of pairwise distances \[\{\norm{\gamma(x)-\gamma(y)}\,\,|\,\,x,y\in Q\}\] determined by image $P=\gamma(Q)$ of $Q$ under $\gamma$ has cardinality $\le|Q-Q|\lesssim|Q|=N$.
\end{proof}

\subsection{Main results}

Our main result is that for real algebraic curves (see Section~\ref{sect:curves} for a precise definition) which are not generalized helices, there is an exponent gap and the set of pairwise distances determined by a finite subset $P$ has cardinality $\gtrsim|P|^{1+\delta}$ for some $\delta>0$. We obtain $\delta=\frac{1}{4}$ in the proof below, although we do not believe this is optimal.

\begin{theorem}\label{thm:algdist}
Suppose that $\Gamma\subset\reals^d$ is a real algebraic curve of degree $m$. Let $P\subset\Gamma$ be a finite subset. If no irreducible component of $\Gamma$ is an algebraic helix, the number of distinct distances determined by $P$ is $\gtrsim_{m,d}|P|^{1+\frac{1}{4}}$. 
\end{theorem}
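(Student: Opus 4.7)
By Cauchy-Schwarz applied to the distance multiplicities, it suffices to bound the second moment
\[T:=\sum_rN_r^2=\bigl|\{(p_1,p_2,p_3,p_4)\in P^4:\|p_1-p_2\|=\|p_3-p_4\|\}\bigr|\]
by $O_{m,d}(|P|^{11/4})$, where $N_r=|\{(p,q)\in P\times P:\|p-q\|=r\}|$; indeed $(\sum_rN_r)^2\le|\Delta(P)|\cdot T$ and $\sum_rN_r=|P|^2$, so this bound yields $|\Delta(P)|\gtrsim|P|^{5/4}$. I would follow the Elekes-Sharir-Guth-Katz paradigm, interpreting each contributing quadruple with $p_1\ne p_2$ as a rigid motion $g\in\mathrm{Iso}(\reals^d)$ satisfying $g(p_1)=p_3$ and $g(p_2)=p_4$, and then count these rigid motions using the special geometry of the curve.

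The count splits according to whether $g$ preserves some irreducible component of $\Gamma$. If it does not, then $g(\Gamma)$ and $\Gamma$ share no common component, so a B\'ezout-type bound for curves in $\reals^d$ gives $|g(\Gamma)\cap\Gamma|=O_{m,d}(1)$, whence at most boundedly many pairs of $P$-points can be paired by $g$. If $g$ does preserve a component, then $g$ lies in the stabilizer $\mathrm{Sym}(\Gamma)\subseteq\mathrm{Iso}(\reals^d)$, which is a closed algebraic subgroup. The crucial structural input, which I would establish via Definition~\ref{def:helix} and the algebraic classification of Lemma~\ref{lemma:algebraichelix}, is that $\mathrm{Sym}(\Gamma)$ is positive-dimensional precisely when some irreducible component of $\Gamma$ is a generalized helix. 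Under the theorem's hypothesis, $\mathrm{Sym}(\Gamma)$ is therefore finite of cardinality $O_{m,d}(1)$.

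Combining, the preserving rigid motions contribute at most $O_{m,d}(|P|^2)$ to $T$, while the remaining contribution must be bounded by a Szemer\'edi-Trotter-style incidence estimate between the points $P\times P$ on the surface $\Gamma\times\Gamma\subset\reals^{2d}$ and the algebraic family of level curves $C_r=\{(x,y)\in\Gamma\times\Gamma:\|x-y\|=r\}$ for $r\in\Delta(P)$. The role of rigidity is to ensure that, absent helix symmetries of $\Gamma$, too many equal-distance pairs among points of $P$ would force a nontrivial infinitesimal rigid motion of a framework supported on $\Gamma$, contradicting helix-freeness. This translates into the curves $C_r$ being in sufficiently general position in $\Gamma\times\Gamma$ to admit a nontrivial incidence bound.

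The main obstacle is extracting the precise exponent $11/4$. One needs to show that the number of $r\in\Delta(P)$ with $N_r\gtrsim k$ is $\lesssim(|P|^2/k)^{\alpha}$ for some exponent $\alpha>1$ determined by the curve rigidity, and the $1/4$ should then arise from the standard balance between sparse and rich regimes in a Szemer\'edi-Trotter or polynomial-method count. Quantifying the rigidity of frameworks on $\Gamma$—the technical heart of the paper according to the abstract—so as to obtain a richness bound of the correct strength is where the real work lies.
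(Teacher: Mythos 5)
Your proposal takes a genuinely different route from the paper (which passes through Elekes curves $\Xi_{pq}$, the Pach--Sharir incidence theorem, and a rigidity analysis of triangular frameworks culminating in the D'Angelo--Tyson helix characterization), and while the structural intuition is appealing, there are concrete gaps that make it fall short of a proof.

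The central difficulty is that the Elekes--Sharir--Guth--Katz correspondence between quadruples and rigid motions is specific to the plane. In $\reals^2$, a quadruple $(p_1,p_2,p_3,p_4)$ with $\norm{p_1-p_2}=\norm{p_3-p_4}$ and $p_1\ne p_2$ determines a unique orientation-preserving isometry $g$ with $g(p_i)=p_{i+2}$, which is what makes it possible to count $T$ by incidences among curves in $\mathrm{SE}(2)$. For $d\ge 3$ the set of such $g$ is a positive-dimensional subvariety of $\mathrm{Iso}(\reals^d)$ (of dimension $(d-1)(d-2)/2$), so each quadruple corresponds to a continuum of rigid motions and the decomposition of $T$ over ``which $g$ produced the quadruple'' is ill-posed. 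Since Theorem~\ref{thm:algdist} is stated for general $d$, this is fatal without a substantial new idea. Even in $d=2$, the step ``at most boundedly many pairs of $P$-points can be paired by $g$'' does not bound $T$: $g$ ranges over a $3$-dimensional group, and the Guth--Katz argument does not proceed by summing a per-$g$ bound but by partitioning and counting intersections of the helices $L_{pq}=\{g:g(p)=q\}$ in $\mathrm{SE}(2)$ --- work which your proposal omits entirely. Your final two paragraphs acknowledge this: the claimed Szemer\'edi--Trotter-style estimate over $\Gamma\times\Gamma$ is not formulated as a concrete incidence theorem with an admissibility hypothesis, and no mechanism is supplied for extracting the exponent $11/4$. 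Finally, the structural claim that $\mathrm{Sym}(\Gamma)$ is positive-dimensional if and only if some component is a generalized helix is plausible and closely related to the paper's use of D'Angelo--Tyson, but it is not the hypothesis the paper actually uses; the paper works with the finer notion of $(D,\mathcal{T})$-degeneracy (triangles being locally flexible along $\Gamma$), and shows that \emph{this} is what forces the helix structure, via Lemma~\ref{lemma:constcurv}. Reducing your symmetry-group claim to something provable with controlled constants in $m$ and $d$ would itself require an argument of comparable depth.
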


\begin{remark} A few weeks after an initial preprint of this paper was released, Pach and de Zeeuw \cite{pachdezeeuw} improved the exponent for the special case when the curve is embedded in the plane (i.e. when $d=2$) to $1+\frac{1}{3}$ using a more direct algebraic argument. Their argument is simpler and shorter than our approach for that special case but does not seem to readily generalize to higher ambient dimensions and does not explore the link to structural rigidity which we look at here. 
\end{remark}

The method can also be applied to quantities of interest other than the number of pairwise distances determined by $P$. To illustrate this, we will also show how to obtain an analogous result for the number of distinct areas of triangles in $\reals^2$ determined by pairs of points in a finite set $P$ and a fixed apex $v$ which does not lie on $\Gamma$. 

\begin{remark}
In \cite{iosevichrnrudnev}, Iosevich, Roche-Newton and Rudnev proved the analogue of Theorem~\ref{thm:guthkatz} for this quantity: they show that a finite non-collinear set of points $P$ in the plane determines $\gtrsim |P|/\log|P|$ distinct areas of triangles with one vertex at the origin. Areas of triangles without the restriction that one vertex is fixed have also been studied by Pinchasi \cite{pinchasi} who gave an exact bound in this case.
\end{remark}

\begin{theorem}\label{thm:algareas}
Suppose that $\Gamma\subset\reals^2$ is a real algebraic curve of degree $m$ and $v\in\reals^2$ is not on $\Gamma$. Let $P\subset\Gamma$ be a finite subset. If no irreducible component of $\Gamma$ is a straight line or an ellipse or hyperbola centered at $v$, the number of distinct areas of triangles with vertices at $v$, $p$ and $q$ for pairs of points $p,q\in P$ is $\gtrsim_m|P|^{1+\frac{1}{4}}$. 
\end{theorem}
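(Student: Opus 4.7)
After translating so that $v$ is the origin, the area of the triangle with vertices $0$, $p$, $q$ equals $\tfrac12|p\times q|$, where $p\times q = p_1q_2-p_2q_1$. The group preserving this bilinear form is $SL_2^{\pm}(\reals):=\{T\in GL_2(\reals):|\det T|=1\}$, a three-dimensional real algebraic group that here plays the role of the group of rigid motions in the distance problem, so the plan is to mirror the proof of Theorem~\ref{thm:algdist} with $SL_2^{\pm}$ replacing rigid motions. Setting $E_\alpha=|\{(p,q)\in P^2:p\times q=\alpha\}|$ and letting $D$ denote the number of distinct areas, the identity $\sum_\alpha E_\alpha=|P|^2$ together with $\#\{\alpha:E_\alpha\neq 0\}\le 2D$ reduces the theorem, by Cauchy--Schwarz, to the energy bound
\[
Q:=\sum_\alpha E_\alpha^2 \;=\; \#\{(p_1,q_1,p_2,q_2)\in P^4 : p_1\times q_1 = p_2\times q_2\} \;\lesssim_m\; |P|^{11/4}.
\]

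Each quadruple contributing to $Q$ with $p_1,q_1$ linearly independent corresponds to a unique $T\in SL_2^{\pm}$ with $Tp_1=p_2$ and $Tq_1=q_2$; quadruples with $p_1,q_1$ linearly dependent, which force $p_2\times q_2=0$, contribute only $O_m(|P|^2)$ to $Q$ since each line through the origin meets $\Gamma$ in $O(m)$ points. I would split the remaining $T$'s according to whether $T$ stabilizes $\Gamma$ setwise. For the stabilizing case, it suffices to classify the irreducible algebraic curves in $\reals^2$ whose $SL_2^{\pm}$-stabilizer is positive-dimensional: up to $SL_2$-conjugacy the connected one-parameter subgroups of $SL_2$ are the compact torus, the split diagonal, and the unipotent, whose orbit closures are respectively ellipses centered at $0$, hyperbolas $xy=c$ together with the coordinate axes, and affine lines parallel to a fixed direction. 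Since conjugation by any $g\in SL_2^{\pm}$ fixes the origin, the positive-dimensional invariant curves are exactly straight lines and the ellipses and hyperbolas centered at the origin, precisely the curves excluded by hypothesis. The $SL_2^{\pm}$-stabilizer of $\Gamma$ is therefore finite of size $O_m(1)$, so this case contributes $O_m(|P|^2)$ quadruples to $Q$.

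The substantive case is $T\in SL_2^{\pm}$ with $T(\Gamma)\neq\Gamma$. Here B\'ezout gives $|S_T|:=|\{p\in P:Tp\in P\}|\le|T^{-1}(\Gamma)\cap\Gamma|\lesssim_m 1$, and what remains is to bound $\sum_{T}|S_T|^2$ over such $T$. At this step I would import the rigidity framework set up for the distance theorem, with the length functional on pairs replaced by the cross product $(p,q)\mapsto p\times q$: a continuous $\Gamma$-preserving flex of an area-labeled framework would produce a one-parameter subfamily of $SL_2^{\pm}$ stabilizing an irreducible component of $\Gamma$, contradicting the classification above. The resulting structural rigidity yields the required incidence bound on transformations $T\in SL_2^{\pm}$ with $|S_T|$ large, leading to $Q\lesssim_m|P|^{11/4}$ and hence $D\gtrsim_m|P|^{5/4}$. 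The main obstacle is verifying that the rigidity and incidence arguments of the distance theorem carry over without essential change to $SL_2^{\pm}$ acting through the cross product, and in particular that the incidence count in the three-dimensional group $SL_2^{\pm}$ (which is split and noncompact, unlike the group of Euclidean motions) behaves the same way as in the rigid-motion setting.
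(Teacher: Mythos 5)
Your proposal takes a genuinely different route from the paper: you set up a Guth--Katz style energy/group-action argument (counting quadruples with equal cross products, parametrizing them by $T\in SL_2^{\pm}$, splitting by whether $T$ stabilizes $\Gamma$), whereas the paper simply applies its general machinery (Theorem~\ref{thm:general}) after verifying that $[D,\Gamma]$ is simple for $D(x,y)=(x\times y)^2$, and then classifies the $(D,\mathcal{T})$-degenerate curves by showing that constancy of $\mathcal{H}^D_{\alpha\beta}$ forces a quadratic equation, i.e.\ an ellipse or hyperbola centered at $v$. Your identification of the positive-dimensional $SL_2^{\pm}$-stabilizers with the excluded curves is a clean conceptual reformulation of the paper's degeneracy classification, and the B\'ezout bound $|S_T|\lesssim_m 1$ for non-stabilizing $T$ is correct.

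However, there is a genuine gap at the step where you need $Q\lesssim_m|P|^{11/4}$. After the stabilizing $T$'s and the degenerate quadruples are disposed of, the non-stabilizing contribution is $\sum_{T:|S_T|\ge 2}|S_T|^2\lesssim_m|\{T\in SL_2^{\pm}: |S_T|\ge 2\}|$, and bounding the size of this set is a nontrivial three-dimensional incidence problem: the set of $T$ with $Tp=q$ is a one-parameter algebraic family inside $SL_2^{\pm}$, and one must control how many of these families are concurrent. The trivial bound is $|P|^4$, and getting anywhere near $|P|^{11/4}$ requires a Szemer\'edi--Trotter or polynomial-partitioning type theorem for these curves in the three-dimensional group; this is the content of the Guth--Katz machinery for rigid motions and of \cite{iosevichrnrudnev} for areas, and it is not something that can be ``imported from the rigidity framework set up for the distance theorem'' in this paper, because that framework is a planar Szemer\'edi--Trotter argument applied to the Elekes curves $\Xi_{pq}\subset\reals^2$, not an incidence theorem for transformations. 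You flag this as ``the main obstacle,'' and it is precisely where your argument is incomplete; absent that incidence input, the proposal does not close.
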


While we do not attempt to state a fully general theorem in this paper, we do prove the analogue of these results to a large class of quantities; see Theorem~\ref{thm:general}.

\begin{remark}
Similarly to Remark~\ref{remark:distdegen}, the number of areas of triangles is $\gtrsim_{m}|P|$ for any irreducible curve $\Gamma$. Taking equally-spaced points on a circle centered at $v$ or on a straight line shows that for these two classes of curves there are finite subsets $P$ which determine $\lesssim |P|$ distinct areas of triangles. Similarly, for any geometric progression $Q$, taking the points $P=\{(q+v_1,q^{-1}+v_2)\,\,|\,\, q\in Q\}$ on the rectangular hyperbola given by $(X-v_1)(Y-v_2)=1$ gives an example of a $P$ on this curve which determines $\lesssim |P|$ distinct areas of triangles. Since any affine transformation which fixes $v$ will preserve the number of distinct areas of triangles, it follows that for any ellipse or hyperbola centered at $v$ or any straight line and any integer $N>0$, there are examples of finite subsets consisting of $N$ points which determine only $\lesssim N$ distinct areas of triangles.  
\end{remark}

Before we begin the proof of Theorem~\ref{thm:algdist}, it will be necessary to review some basic results from algebraic geometry and a introduce some language from the theory of structural rigidity. We have collected these prerequisites in Section~\ref{sect:prereq}. In the final section, we discuss some links to other results in the literature.

\subsection{Outline of proof}

The proof itself turns out to be technical, even though the argument is quite elementary. As a consequence, we give a brief and informal expository outline to help navigate the reader. The proof itself begins in Section~\ref{sect:gen}. 

Consider a real algebraic curve $\Gamma\subset\reals^d$. We define exactly what we mean by this in Section~\ref{sect:curves}. 

The first step to proving Theorem~\ref{thm:algdist} is to follow a method of Elekes (see Proposition~\ref{prop:xixi}) to reduce the question to checking whether certain \emph{planar} algebraic curves intersect a lot; if they do not, then the original curve $\Gamma$ cannot support finite subsets determining few distinct distances. This is done in Section~\ref{sect:gen}.

Once this reduction is performed, the main general step concerns showing that the curves constructed do not intersect too much; this is done in Section~\ref{sect:admiss}. The main novelty in our technique involves showing that if this intersection property fails then the curve enjoys a very restrictive structural property: loosely speaking, it is possible to move any triangle with vertices on the curve along the curve while keeping its edge lengths fixed (more precisely, we show that a local version of this property must hold). This result furnishes a link between distinct distance results and the theory of structural rigidity. So as to not disrupt the flow of the main argument, the proof of the rigidity results is done in Section~\ref{sect:rig}.

The argument so far applies more generally to other quantities of interest on curves, not just Euclidean distance. While we do not explore a full generalization in this paper, we give a satisfactory generalization of the above result to a certain class of symmetric algebraic quantities $D(p,q)$ between pairs of points $p,q\in\Gamma$. The same argument shows that the curves which support finite subsets $P$ determining only a few distinct values $D(p,q)$ as $p,q$ vary over $P$ enjoy an analogous structural property: it is possible to move any triangle with vertices $p,q,r$ on the curve along the curve while keeping the values of $D(p,q)$, $D(q,r)$ and $D(r,p)$ fixed.

For the particular case where $D$ is Euclidean distance and the original distinct distance problem, the proof of Theorem~\ref{thm:algdist} is completed by characterizing real algebraic curves which have the property that triangles may be moved along them while preserving edge lengths. This is done in Section~\ref{sect:degen}. The key step is to show that the property forces the norm of every derivative of (a parametrization of) the curve to be constant; this is done by using a finite difference approximation to link the structural rigidity of points along the curve to a statement about derivatives (see Lemma~\ref{lemma:constcurv}). Finally, we use a result of D'Angelo and Tyson \cite{helices} which states that real analytic curves with all derivatives of constant norm are necessarily generalized helices.

Due to a technical reason which arises in the proof, it is more convenient to work with a simpler class of curves (see Definition~\ref{def:simple}); the reduction to this case is performed in Section~\ref{sect:simplicity}. 

\section{Technical prerequisites}\label{sect:prereq}

\subsection{Preliminaries}

Given a finite set $P$, we will denote its cardinality by $|P|$. We will write $P^{2*}$ for the restricted Cartesian product, \[P^{2*}=\{(p,q)\,\,|\,\,p,q\in P, \,p\ne q\}.\]

If $f_1$ and $f_2$ are non-negative functions $\mathbb{N}\to\reals_{\ge 0}$ and $\nu_1, \ldots, \nu_r$ is a list of parameters, we will write $f_1\lesssim_{\nu_1, \ldots, \nu_r}f_2$ to mean that there exist $n_0=n_0(\nu_1,\ldots,\nu_r)\in\mathbb{N}$ and a real function $C=C(\nu_1, \ldots,\nu_r)>0$ depending only on $\nu_1, \ldots, \nu_r$ such that for all $n>n_0$, it follows that $f_1(n)\le Cf_2(n)$. We will also write $f_1\gtrsim_{\nu_1, \ldots,\nu_r}f_2$ to mean $f_2\lesssim_{\nu_1, \ldots,\nu_r}f_1$.

When $f\lesssim_{\nu_1, \ldots, \nu_r}1$, we will say that $f$ is $(\nu_1, \ldots, \nu_r)$-bounded.

\subsection{Curves}\label{sect:curves}

A \emph{curve} $\Gamma\subset\reals^d$ (without further explicit or implicit qualification) refers to a one-dimensional smooth embedded submanifold of $\reals^d$.  

For a field $\mathbb{F}$ and an ideal $I\subset \mathbb{F}[X_1,\ldots,X_d]$ of a polynomial ring in $d$ variables over $\mathbb{F}$, we define the (affine) zero-set \[Z_\mathbb{F}(I)=\{x\in\mathbb{F}^d\,\,|\,\,f(x)=0\text{ for all }f\in I\}.\] We will mostly be interested in zero-sets for the case where $\mathbb{F}=\complex$; see \cite{ideals}, \cite{shafarevich} and \cite{hartshorne} for an introduction to algebraic geometry in this setting. In particular, we will assume that the reader is familiar with basic notions such as irreducibility, the dimension of ideals and singularities of zero sets but we will briefly review concepts and results which are more advanced.

\begin{definition}[Algebraic curve]
An \emph{(affine) algebraic curve in ambient dimension} $d$, $\Gamma$, is the zero set in $\complex^d$ of a one-dimensional ideal in $\complex[X_1, \ldots, X_d]$.

The \emph{ideal of }$\Gamma$ is the ideal \[I_\Gamma := \{f\in\complex[X_1,\ldots,X_d]\,\,|\,\,f(x)=0\text{ for all }x\in \Gamma\}.\]
\end{definition}

For certain technical reasons which can arise in dimensions $d>2$, we will restrict to real curves whose complexification is one-dimensional according to the following definition.

\begin{definition}[Real algebraic curve]
A \emph{real algebraic curve} $\Gamma$ in $\reals^d$ is the non-empty open subset in $\reals^d$ of a set of the form $Z_\complex(I)\cap\reals^d$ such that 

\begin{enumerate}
\item The ideal $I\subset\complex[X_1,\ldots,X_d]$ is one-dimensional (over $\complex$).
\item For each irreducible component $C$ of $Z_\complex(I)$, the set $C\cap\reals^d$ is a one-dimensional smooth embedded one-dimensional submanifold of $\reals^d$ away from the singularities of $Z_\complex(I)$.
\end{enumerate}

\end{definition}

\begin{remark} With this definition, for example, although the zero set of $f(X_1, X_2, X_3)=X_2^2+X_3^2$ in $\reals^3$ is a one-dimensional smooth manifold (it is the line along the $X_3$-axis), it is not a real algebraic curve since the complexification has dimension $2$ over $\complex^3$. On the other hand, even though the zero set of $f(X_1, X_2)=X_1^2+X_2^2$ in $\complex^2$ is one-dimensional, it is not a real algebraic curve since its intersection with $\reals^2$ is a single point.
\end{remark}

We will frequently consider smooth parametrizations of subsets of real algebraic curves so that we can apply analytic tools in our arguments. When there is a designated smooth parametrization $\gamma$ of $\Gamma$, we will sometimes abuse our definition slightly and refer to the parametrization $\gamma$ itself as $\Gamma$; this will be clear from context.

\begin{definition}[Degree] Let $\Gamma$ be a real algebraic curve. The \emph{(geometric) degree of} $\Gamma$ is the geometric degree of $Z_\complex(I_\Gamma)$, i.e. the number of points of intersection of the projective closure of $Z_\complex(I_\Gamma)$ with a generic hyperplane. The  \emph{ambient dimension} of $\Gamma$ is the complex ambient dimension of $Z_\complex(I_\Gamma)$.
\end{definition}

\begin{definition}[Algebraic degree]
Let $Z_\complex(I)\subset\complex^d$ be the zero-set of an ideal $I\subset\complex[X_1, \ldots, X_d]$. The set $\{f_1, \ldots, f_r\}\subset\complex[X_1, \ldots, X_d]$ \emph{generates the zero-set} $Z_\complex(I)$ if the ideal $J$ generated by $\{f_1, \ldots, f_r\}$ has $Z_\complex(J)=Z_\complex(I)$ (equivalently, if the radical ideals generated by $I$ and $J$ coincide). For a real algebraic curve $\Gamma\subset\reals^d$, the \emph{algebraic degree of} $\Gamma$ is the minimum of $\max_j\deg f_j$ over all sets $\{f_1, \ldots, f_r\}$ generating $\Gamma$.
\end{definition}

\begin{remark}
In the case where the ambient dimension is $d=2$, the algebraic and geometric degrees of an irreducible real algebraic curve coincide.
\end{remark}

\subsection{Computational algebraic geometry}

We will now review some quantitative tools from algebraic geometry which will be useful in deriving bounds for quantities arising in our proof.

We will utilize a refinement of B\'{e}zout's Theorem and two standard corollaries (proved here for completeness) to bound the number of zero-dimensional components in intersections. The proof of this refinement appears in \cite{heintz}. For a more detailed exposition of this result, see the section on B\'{e}zout's inequality in \cite{spendsymm}.

\begin{theorem}[B\'{e}zout]
Let $I\subset\complex[X_1,\ldots,X_d]$ be an ideal generated by $\{f_1,\ldots,f_r\}$ with $r\ge d$. Assume that $\deg f_j\ge\deg f_{j+1}$ for $j=1, \ldots, (r-1)$. Then the number of zero-dimensional components of $Z_\complex(I)$ is at most \[\prod_{j=1}^d(\deg f_j).\]
\end{theorem}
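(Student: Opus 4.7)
The plan is to reduce to the classical affine B\'ezout bound by replacing the $r$ generators with $d$ generic linear combinations of the same individual degrees, and then to verify that this reduction preserves the set of isolated points.

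First I would form, for each $j=1,\ldots,d$, the polynomial
\[
 g_j \;=\; \sum_{k=j}^{r} \lambda_{jk}\,f_k,
\]
where the scalars $\lambda_{jk}\in\complex$ are to be chosen generically. Since $\deg f_k\le\deg f_j$ whenever $k\ge j$ by the decreasing-degree hypothesis, and $\lambda_{jj}\ne 0$ for generic $\lambda$, one has $\deg g_j=\deg f_j$. Moreover $Z_\complex(f_1,\ldots,f_r)\subseteq Z_\complex(g_1,\ldots,g_d)$, so every zero-dimensional component of the original zero set lies in the new one; it remains only to match them to isolated points of $Z_\complex(g_1,\ldots,g_d)$, which can then be counted by affine B\'ezout.

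Next I would establish the genericity claim: for $\lambda$ in a Zariski-dense open subset of parameter space, every zero-dimensional component $p$ of $Z_\complex(f_1,\ldots,f_r)$ remains a zero-dimensional component of $Z_\complex(g_1,\ldots,g_d)$. The isolated points of $Z_\complex(f_1,\ldots,f_r)$ form a finite list $p_1,\ldots,p_N$; for each $p_i$, the locus of $\lambda$ for which some positive-dimensional local component of $Z_\complex(g_1,\ldots,g_d)$ passes through $p_i$ is closed and of positive codimension in the space of parameters. The reason is local: since $(f_1,\ldots,f_r)$ cuts out $\{p_i\}$ set-theoretically in a neighbourhood of $p_i$, some $d$-tuple of $\complex$-linear combinations of the $f_k$ already does the same, and having this property is a Zariski-open condition on $\lambda$. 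Intersecting the $N$ open loci produces the desired dense open set of good parameters.

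Having reduced to bounding the number of isolated components of $Z_\complex(g_1,\ldots,g_d)$, I would invoke the classical affine B\'ezout inequality for $d$ hypersurfaces in $\complex^d$ of degrees $\deg g_j=\deg f_j$, giving at most $\prod_{j=1}^{d}\deg f_j$ isolated points. This is proved by induction on $d$ using the proper-intersection estimate $\deg(W\cap V(g))\le\deg W\cdot\deg g$ applied at each stage to those irreducible components of $W$ not contained in $V(g)$; components of $W$ already contained in some $V(g_j)$ can be discarded, since they produce no isolated point of the eventual intersection. The hard part will be the genericity verification, since one must rule out unwanted positive-dimensional components of $Z_\complex(g_1,\ldots,g_d)$ sweeping through the fixed finite set $\{p_1,\ldots,p_N\}$; the cleanest resolution is the local argument sketched above, which converts the question into openness of a transversality-type condition on the parameters $\lambda$.
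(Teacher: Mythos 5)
The paper does not prove this theorem: it is quoted as a result of Heintz, with the text stating ``The proof of this refinement appears in \cite{heintz}'' and pointing to \cite{spendsymm} for an exposition. So there is no in-paper proof for a direct comparison; what follows is an assessment of your argument on its own terms.

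Your overall strategy --- replace the $r$ generators by $d$ generic ``staircase'' linear combinations $g_j=\sum_{k\ge j}\lambda_{jk}f_k$, which have $\deg g_j\le\deg f_j$ because the degrees are sorted, and then apply the classical affine B\'ezout bound to the $d$ hypersurfaces $V(g_1),\ldots,V(g_d)$ --- is the standard route to this refinement and is indeed close in spirit to what one finds in the cited literature. Your inductive degree estimate is also sound: a component $W$ of $\bigcap_{j\le k}V(g_j)$ that is not cut at step $k+1$ has $\dim W\ge d-k$ by Krull, and the remaining $d-k-1$ hypersurfaces can drop its dimension only to $\ge 1$, so such a $W$ truly contributes no isolated point of the final intersection; the discard is therefore legitimate. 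One tiny imprecision: $\lambda_{jj}\ne 0$ does not by itself force $\deg g_j=\deg f_j$ (leading forms of several $f_k$ with the same degree could cancel), but only the inequality $\deg g_j\le\deg f_j$ is needed, and that holds unconditionally.

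The genuine gap --- which you flag yourself --- is the genericity step, and it deserves more than a sketch. Three points need to be established. First, \emph{existence}: near an isolated zero $p_i$ the map $f=(f_1,\ldots,f_r)$ restricted to a small neighbourhood $U$ of $p_i$ is finite onto an analytic germ $f(U)\subset\complex^r$ of dimension $\le d$, and a generic linear subspace $K\subset\complex^r$ of codimension $d$ through the origin meets $f(U)$ only at $0$ (projectivize the tangent cone of $f(U)$ at $0$, which has dimension $\le d-1$, and note that a generic $\mathbb{P}(K)$ of dimension $r-d-1$ misses it in $\mathbb{P}^{r-1}$). Choosing $\Lambda$ with $\ker\Lambda=K$ gives the required $d$-tuple. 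Second, \emph{the constraint}: one must check that restricting $\Lambda$ to the upper-triangular shape does not shrink the set of achievable kernels away from this generic locus. It does not --- on the locus $\lambda_{jj}\ne 0$, row-reducing the triangular left block to the identity shows that $\ker\Lambda$ ranges over exactly the same dense open subset of the Grassmannian as for unconstrained $\Lambda$ --- but this is an actual claim that your proof should contain. Third, \emph{openness}: that $\{\lambda : p_i \text{ is isolated in } Z(g_1,\ldots,g_d)\}$ is Zariski-open follows from upper semicontinuity of the local fibre dimension of the incidence variety $\{(\lambda,x): g_j(\lambda)(x)=0,\ j\le d\}$ over the $\lambda$-space, which again merits being said explicitly rather than asserted. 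With these three points spelled out, your argument is complete; as written it records the right intermediate claims but does not yet justify them.
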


\begin{remark}
Note that only the $d$ largest degrees appear in the product for the upper bound of the number of zero-dimensional components.
\end{remark}

\begin{corollary}
Let $\Gamma\subset\reals^d$ be an irreducible real algebraic curve of algebraic degree $m$. Then the number of singularities of $\Gamma$ is $(d,m)$-bounded.
\end{corollary}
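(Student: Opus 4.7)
My plan is to bound the singular locus of $Z_\complex(I_\Gamma)$ by the Jacobian criterion together with B\'{e}zout's inequality, exactly as just stated in the theorem above.

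First I would fix a generating set $\{f_1,\dots,f_r\}$ for $I_\Gamma$ (or at least for its radical, which is all that matters for the zero set) with $\deg f_j\le m$, available by hypothesis on the algebraic degree. Since $Z_\complex(I_\Gamma)$ has pure complex dimension one at every smooth point, the Jacobian criterion tells us that a point $p\in Z_\complex(I_\Gamma)$ is nonsingular precisely when the Jacobian matrix $\bigl(\partial f_i/\partial X_j\bigr)(p)$ has rank equal to $d-1$. Let $M_1,\dots,M_s$ denote the $(d-1)\times(d-1)$ minors of this Jacobian matrix; each $M_k$ is a polynomial of degree at most $(d-1)(m-1)$. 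The singular locus of $Z_\complex(I_\Gamma)$ is then the zero set of the ideal $J$ generated by $\{f_1,\dots,f_r,M_1,\dots,M_s\}$, and the number of generators $r+s$ is $(d,m)$-bounded since both $r$ and $s$ are controlled by $d$ and $m$.

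Next I would invoke irreducibility: since $\Gamma$ is irreducible as a real algebraic curve, its complexification $Z_\complex(I_\Gamma)$ is an irreducible one-dimensional variety (this is built into our working definition; otherwise an argument via the prime decomposition of $I_\Gamma$ would be needed). The singular locus of an irreducible variety is a proper Zariski-closed subset, hence of strictly smaller dimension, so $Z_\complex(J)$ has dimension zero. In particular, every component of $Z_\complex(J)$ is a point, and the number of singularities of $\Gamma$ equals the number of zero-dimensional components of $Z_\complex(J)$.

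Finally I would apply the quantitative B\'{e}zout inequality quoted just above. After reordering $\{f_1,\dots,f_r,M_1,\dots,M_s\}$ by decreasing degree, the number of zero-dimensional components of $Z_\complex(J)$ is at most the product of the $d$ largest degrees, each of which is bounded by $\max(m,(d-1)(m-1))$. Hence the number of singularities is at most $\bigl(\max(m,(d-1)(m-1))\bigr)^{d}$, a quantity depending only on $d$ and $m$. The only mild subtlety, and the one to check carefully, is that the ideal $J$ I write down really does cut out the singular locus set-theoretically and that $r+s\ge d$ so that B\'{e}zout applies as stated; if $r+s<d$ one can pad with copies of the generators without changing $Z_\complex(J)$ or the degree bound.
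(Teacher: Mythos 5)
Your argument is correct and follows essentially the same route as the paper: identify the singular locus via the vanishing of the $(d-1)\times(d-1)$ Jacobian minors, observe that irreducibility forces this locus to be zero-dimensional, and invoke the quantitative B\'{e}zout inequality. The only cosmetic difference is that the paper picks a single minor $M_k$ not vanishing identically on $\Gamma$ and applies B\'{e}zout to the system $\{f_1,\dots,f_r,M_k\}$, whereas you throw in all minors at once; both give a $(d,m)$-bounded estimate.
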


\begin{proof}
Let $\Gamma$ be generated by $\{f_1, \ldots, f_r\}$ where $\deg f_j\le m$ for each $1\le j\le r$. The singularities of $\Gamma$ form a Zariski-closed proper subset of $\Gamma\subset\complex^d$ which is the intersection of $\Gamma$ with hypersurfaces which are the zero-sets of determinants of the $(d-1)\times (d-1)$ minors of the Jacobian matrix of $\{f_1, \ldots, f_r\}$. These determinants are polynomials of $(d,m)$-bounded degree. By the irreducibility of $\Gamma$, at least one of these hypersurfaces intersects $\Gamma$ in a finite number of points. By B\'{e}zout's Theorem, the number of singularities is therefore $(d,m)$-bounded.
\end{proof}

\begin{corollary}
Let $\Gamma\subset\reals^d$ be an irreducible real algebraic curve of geometric degree $n$ and algebraic degree $m$. Then $n\le m^d$.
\end{corollary}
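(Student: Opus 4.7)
The plan is to bound the geometric degree $n$ by adjoining one generic linear equation to a small-degree defining system for $\Gamma$ and invoking B\'{e}zout's inequality. Let $\{f_1, \ldots, f_r\} \subset \complex[X_1, \ldots, X_d]$ be a set of generators for the zero-set $Z_\complex(I_\Gamma)$ that attains the algebraic degree, so that $\deg f_j \le m$ for every $j$. For a generic affine hyperplane $H = \{L = 0\}\subset \complex^d$ given by a linear polynomial $L$, my aim is to count $|\Gamma \cap H|$ in two ways and compare.

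For the first count, since $\Gamma$ is irreducible and one-dimensional, its projective closure $\bar{\Gamma} \subset \mathbb{P}^d$ has only finitely many points at infinity and only finitely many singularities. A sufficiently generic hyperplane therefore meets $\bar{\Gamma}$ transversally in exactly $n$ smooth points, all of which lie in the affine chart. Consequently $\Gamma \cap H$ is a set of exactly $n$ points in $\complex^d$, and set-theoretically this set coincides with $Z_\complex(f_1, \ldots, f_r, L)$, because $\{f_1,\ldots,f_r\}$ generates the zero-set $Z_\complex(I_\Gamma)$.

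For the second count, every component of $Z_\complex(f_1, \ldots, f_r, L)$ is zero-dimensional, so B\'{e}zout's inequality bounds the number of components by the product of the $d$ largest degrees among the generators $\{f_1, \ldots, f_r, L\}$. Since each $f_j$ has degree at most $m$ and $L$ has degree $1$, this product is at most $m^d$. Combining the two counts gives $n \le m^d$.

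The main technical point to verify is the hypothesis $r + 1 \ge d$ required by B\'{e}zout's inequality. This follows from Krull's height theorem: the prime ideal $I_\Gamma$ has height $d - 1$, so it cannot coincide with the radical of an ideal generated by fewer than $d - 1$ polynomials, forcing $r \ge d - 1$. In the boundary case $r = d-1$ all $d$ of the polynomials enter the product and the bound becomes $m^{d-1}\cdot 1 \le m^d$, so the conclusion is unaffected. The genericity of $H$ is routine, requiring only that $L$ avoid the finite sets of singularities and points at infinity of $\bar{\Gamma}$ and cut $\bar{\Gamma}$ transversally.
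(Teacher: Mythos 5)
Your proposal is correct and takes essentially the same approach as the paper: the paper's proof is a one-line application of the stated B\'{e}zout inequality to the intersection of $\Gamma$ with a generic hyperplane. Your version simply spells out the genericity argument and the $r+1\ge d$ hypothesis count (via Krull's height theorem) that the paper leaves implicit.
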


\begin{proof}
By B\'{e}zout's Theorem, the number of zero-dimensional components in the intersection of $\Gamma$ with the zero set of a linear polynomial is $\le m^d$.
\end{proof}

When the ambient dimension is $d>2$, we will use the theory of Gr\"{o}bner bases to obtain appropriate bounds. For an introduction to Gr\"{o}bner bases, see \cite{ideals}. The degree bound we will use is the following result due to Dub\'{e}.

\begin{theorem}[Dub\'{e} \cite{dubebound}]
Suppose that $I\subset\complex[X_1,\ldots,X_d]$ is an ideal generated by $\{f_1, \ldots, f_r\}$. Write $m=\max_j\deg f_j$. Then there exists a Gr\"{o}bner basis of $I$ consisting of polynomials all of degree $\lesssim m^{2^d}$.
\end{theorem}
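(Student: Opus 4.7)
The plan is to follow Dubé's cone-decomposition approach, since proving the bound with the precise exponent $2^d$ requires delicate bookkeeping and the naive strategies fail by a wide margin. First I would reduce to the homogeneous setting: homogenizing each $f_j$ with respect to a new variable $X_0$ replaces $I$ by a homogeneous ideal $I^h\subset\complex[X_0,X_1,\ldots,X_d]$ whose generators still have degree at most $m$, and a Gröbner basis of $I^h$ dehomogenizes to a Gröbner basis of $I$ of no greater degree. With respect to a well-chosen monomial order such as reverse lexicographic, the maximum degree of an element in the reduced Gröbner basis of $I^h$ is controlled by a regularity-type invariant $D(I^h)$: the largest degree at which the initial ideal $\operatorname{in}(I^h)$ acquires a new minimal generator.

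Next I would introduce Stanley-type \emph{cone decompositions} of the quotient $\complex[X_0,\ldots,X_d]/I^h$, writing it as a direct sum of free modules of the form $u\cdot\complex[Y]$ for various monomials $u$ and subsets $Y$ of the variables. To each such decomposition one attaches \emph{Macaulay constants} $a_0\ge a_1\ge\cdots\ge a_d$, where $a_i$ is the maximum degree of any generator $u$ of a cone of dimension $i$. The key feature is that if the decomposition is produced by a specific algorithmic procedure (Dubé's ``exact'' cone decomposition), then any minimal generator of $\operatorname{in}(I^h)$ must sit in degree at most $a_0+1$, so that $D(I^h)\le a_0+1$ and the problem reduces to bounding $a_0$ in terms of $m$ and $d$.

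The heart of the argument is then a recursion bounding the Macaulay constants at one level in terms of those at a lower level, obtained via hyperplane sections $X_d=0$ and quotients by powers of $X_d$; these operations produce ideals in one fewer variable whose Macaulay constants iteratively control those of the original. Telescoping the recursion yields a bound of the shape $a_0\le C_d\cdot m^{2^d}$ for a constant depending only on $d$, which is the desired estimate. The main obstacle is precisely this inductive step: the naive recursion obtained from directly running Buchberger's algorithm, or from classical tools such as Hermann's ideal-membership bound, produces much worse (far beyond double-exponential) estimates, and keeping the exponent at $2^d$ requires the cone decomposition to be built so that the pass from $a_i$ to $a_{i-1}$ costs only a polynomial factor in $m$ rather than an exponential one. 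I would expect the bulk of the work to go into setting up this algorithmic construction and verifying the polynomial recursion between successive Macaulay constants.
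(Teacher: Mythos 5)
The paper does not prove this theorem; it is stated as a black-box tool and cited directly to Dub\'e's paper \cite{dubebound}, so there is no in-text argument to compare against. Your sketch is a reasonable and essentially faithful outline of Dub\'e's own proof strategy: homogenize, pass to cone (Stanley) decompositions of $\complex[X_0,\ldots,X_d]/I^h$, attach Macaulay constants $a_0\ge\cdots\ge a_d$, and show via an ``exact'' cone decomposition that the degree of any minimal generator of $\operatorname{in}(I^h)$ is at most $a_0+1$, after which the work is a recursive bound on the $a_i$. Two small imprecisions are worth flagging. First, the resulting degree bound is term-order independent (this is a central feature of Dub\'e's result), so phrasing it as depending on a ``well-chosen'' order such as reverse lexicographic mildly misrepresents the theorem; the ordering choice matters for other regularity bounds (e.g.\ Bayer--Stillman) but not here. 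Second, the claim that each step of the recursion ``costs only a polynomial factor in $m$'' is a bit loose: the recursion between successive Macaulay constants is roughly a squaring, $a_{i-1}\lesssim a_i^2$, and it is the iteration of this squaring $d$ times that produces the double exponential $m^{2^d}$ in the exponent; describing it as a polynomial \emph{factor} obscures why the exponent itself becomes exponential in $d$. With those caveats, your account of the shape of the argument and the identification of the algorithmic cone-decomposition machinery as the crux is accurate.
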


If $I\subset\complex[X_1, \ldots, X_{d_1}, Y_1, \ldots, Y_{d_2}]$ is an ideal such that $\dim Z_\complex(I)=r$, then the Zariski closure of the projection of $Z_\complex(I)$ onto the first $d_1$ coordinates has dimension at most $r$. The ideal corresponding to this Zariski closure is precisely the ideal $I\cap\complex[X_1, \ldots, X_{d_1}]$, up to taking radicals. Recall that if $G$ is a Gr\"{o}bner basis of $I$ for an elimination ordering eliminating $Y_1, \ldots, Y_{d_1}$, then $G\cap\complex[X_1, \ldots, X_{d_1}]$ is a Gr\"{o}bner basis for $I\cap\complex[X_1, \ldots, X_{d_1}]$.

We will also require a result which bounds the number of connected components of the real algebraic curve $\Gamma\subset\reals^d$. When $d=2$, we may use Harnack's Curve Theorem \cite{harnack}. For $d>2$, there is the following deeper result due to Thom \cite{thom} and Milnor \cite{milnor}; we will only state the theorem for the situation which arises in this article.

\begin{theorem}[Thom-Milnor]
Let $\Gamma\subset\reals^d$ be a real algebraic curve of algebraic degree $m$ and ambient dimension $d$. The number of connected components of $\Gamma$ in $\reals^d$ is at most $m^{Cd}$ for some universal constant $C>0$.
\end{theorem}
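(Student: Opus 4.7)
My approach is Morse-theoretic: I would bound the number of components of $\Gamma$ by counting critical points of a generic linear projection $L\colon\reals^d\to\reals$ together with the ``ends'' of non-compact components, and would bound both quantities using the earlier B\'ezout-type results in the excerpt.

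First, by the corollary on singularities above (applied to each irreducible component, whose number is itself bounded by the geometric degree $\le m^d$), the singular locus of $\Gamma$ has $(d,m)$-bounded cardinality. Removing it changes the component count by at most that additive error, so I may assume $\Gamma$ is a smooth one-manifold. After a generic rotation, take $L(x)=x_1$. A smooth point $p\in\Gamma$ is critical for $L|_\Gamma$ iff the one-dimensional tangent line $T_p\Gamma$ lies inside $\ker L$, equivalently iff $e_1$ lies in the row span of the Jacobian $J(p)=[\nabla f_i(p)]$, where $\{f_1,\ldots,f_r\}$ generates $I_\Gamma$ with $\deg f_i\le m$; this is in turn equivalent to the vanishing of every $d\times d$ minor of the augmented matrix $[J(p);\,e_1^\top]$.

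Second, I would count these critical points algebraically. Fix one such minor $M$, a polynomial of degree at most $(d-1)(m-1)$. For generic direction $L$ no irreducible component of $\Gamma$ is contained in $\{M=0\}$ (otherwise $L$ would be constant on that component, forcing it into a hyperplane), so the complex zero set $Z_\complex(f_1,\ldots,f_r,M)$ is zero-dimensional. Applying the refined B\'ezout inequality stated above to this system of $r+1$ generators of degrees $m,\ldots,m,(d-1)(m-1)$ and retaining only the top $d$ bounds the number of isolated zeros by $(d-1)(m-1)\cdot m^{d-1}\lesssim m^{Cd}$ for some absolute $C$. Thus the number of smooth critical points of $L|_\Gamma$ enjoys the same bound.

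Third, I would bound the ends of non-compact components. Each smooth non-compact component of $\Gamma$ is diffeomorphic to $\reals$ and has two ends, each accumulating (in the projective closure of $Z_\complex(I_\Gamma)$ inside $\mathbb{P}^d(\complex)$) at a point of the hyperplane at infinity. The sum of local branch multiplicities of this intersection is the geometric degree, hence $\le m^d$. Assembling: every compact connected component contains at least one critical point of $L|_\Gamma$ (a maximum of $L$ exists by compactness), and every non-compact component contributes at least one end at infinity, so the total number of components is $\lesssim m^{Cd}$ for an absolute $C$. The main obstacle will be to verify that $L$ may be chosen generic enough that the critical points of $L|_\Gamma$ are isolated smooth points and that $Z_\complex(f_1,\ldots,f_r,M)$ is genuinely zero-dimensional so that the refined B\'ezout inequality applies cleanly; both follow from Sard's theorem combined with the observation that the bad directions form a proper Zariski-closed subset of the sphere of directions in $(\reals^d)^*$.
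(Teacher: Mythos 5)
The paper does not prove this theorem; it only states it and cites Thom and Milnor, so there is no internal proof to compare against. Your Morse-theoretic strategy is the standard one and is close in spirit to Milnor's original argument. Two remarks are worth making. First, Milnor himself uses the proper function $f_p(x)=\norm{x-p}^2$ for a generic basepoint $p$ rather than a linear projection; properness guarantees that \emph{every} component, compact or not, attains a minimum, which neatly removes the need for a separate count of ends at infinity and avoids the subtleties about real versus complex local branches in the projective closure. Your linear-projection variant can be made to work, but only at the cost of the extra bookkeeping you sketch in your third step.

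There are two genuine gaps in the argument as written. The first concerns the ends at singular points. After deleting the singular locus $S$, the one-manifold $\Gamma\setminus S$ has components diffeomorphic to $\reals$ whose ends may accumulate at points of $S$ rather than at infinity. Such a component is \emph{not} compact (so it need not contain an interior maximum of $L$) and does \emph{not} escape to infinity, so it is counted by neither of your two mechanisms. You need a third charge: bound, for each $p\in S$, the number of real half-branches of $\Gamma$ at $p$ (this is controlled by the local multiplicity, hence by the degree, via another B\'ezout-type count), and then charge each such component to the singular points where its ends accumulate. The second gap is in the claim that if a single $d\times d$ minor $M$ of $[J;\,e_1^\top]$ vanishes identically on an irreducible component $C$, then $L$ is constant on $C$. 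This inference is false for a single minor: at a smooth point $p$ the rows of $J(p)$ span a $(d-1)$-dimensional space, but the particular $d-1$ rows entering $M$ may be linearly dependent, in which case $M(p)=0$ carries no information about whether $e_1\in\operatorname{rowspan}J(p)$. The fix is routine but necessary: for each irreducible component $C$ of the complexification choose a minor $M_C$ not vanishing identically on $C$ (such a minor exists because $L$ is not constant on $C$, which \emph{does} force all minors to vanish simultaneously), bound $|C\cap\{M_C=0\}|$ by B\'ezout, and sum over the at most $m^d$ components. With these repairs the argument goes through and yields a bound of the claimed form.
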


\begin{remark}\label{remark:difftoalg}
At various points in our proof, we will need to convert ordinary differential equations to a more algebraic form.

Suppose that the set $\{f_1, \dots, f_r\}$ generates the real-algebraic curve $\Gamma$ with $\deg f_j\le m$ and let $\gamma:I\to\reals^d$ be a singularity-free real-analytic parametrization of an open subset of $\Gamma$. Write $\gamma(\tau)=(x_1(\tau), \ldots, x_d(\tau))$ for real-analytic $x_i:I\to\reals$ and $\dot{\bf x}=(\dot x_1, \ldots, \dot x_d)$. Since $f_j(\gamma(\tau))\equiv 0$, it follows that \[\nabla f_j(x_1,\ldots,x_d)\cdot(\dot x_1, \ldots, \dot x_d)\equiv 0.\] For each $1\le i\le d$, write $\dot {\bf x}^i\in\reals^{d-1}$ for the vector obtained by deleting $\dot x_i$ from $\dot {\bf x}$. For each $(d-1)\times (d-1)$ minor $M$ of the Jacobian $J$ associated to $\{f_1, \ldots, f_r\}$ whose columns do not include the $i$th column of $J$, we get an equation \[M\dot{\bf x}^i=\dot x_i{\bf b},\] for a certain vector ${\bf b}$ of partial derivatives with respect to $x_i$ of $\{f_1, \ldots, f_r\}$. Hence, \begin{equation}\label{tangentexpress}\det(M)\dot{\bf x}^i=\dot x_iM^*{\bf b}\end{equation} where $M^*$ is the adjugate matrix of $M$. Note that $\det M$ and the entries in $M^*$ are polynomials in $x_1, \ldots, x_d$ of degree at most $dm$. Furthermore, not all of these equations can be trivial on $\Gamma$ since, away from a finite number of singularities, the rank of $J$ is $(d-1)$. 

Consequently, any first order differential equation of degree $\kappa$ satisfied by the components of $\gamma$ on an open set is equivalent to the vanishing on $\Gamma$ of a certain system of polynomials of $(d,m,\kappa)$-bounded degree and, conversely, if these polynomials do not all vanish on $\Gamma$ then the differential equation fails to be satisfied for some singularity-free open subset of $\Gamma$.

Similarly, using the equation \[0\equiv\frac{d}{dt}\big(\nabla f_j({\bf x})\cdot\dot{\bf x})=\dot{\bf x}\cdot H^{f_j}_{\bf x}(\dot{\bf x}) + \nabla f_j({\bf x})\cdot\ddot{\bf x},\] where $H^f_{\bf x}$ is the Hessian of $f$ at ${\bf x}$, allows us to express any second-order differential equation of degree $\kappa$ in the components of $\gamma$ as a system of polynomials of $(d,m,\kappa)$-bounded degree such that the differential equation is satisfied by $\Gamma$ if and only if all the polynomials vanish on $\Gamma$.
\end{remark}

We will treat the case of rationally-parametrized curves first because this case is more elementary and we can (often) obtain a better bound. For an introduction to rational curves, see \cite{rationalintro}.

\begin{definition}
A smooth function $\gamma:I\to\reals^d$ where $I$ is a real open interval is a \emph{(real) rational parametrization} if each coordinate function $\gamma_j(t)$ for $j=1, \ldots, d$ is given by a reduced rational function (in other words, the ratio of two coprime polynomials) in $t$ over $\reals$ and the tangent vector $\dot\gamma(t)$ does not vanish for $t\in I$. The \emph{degree of} $\gamma$ is $\max_j\deg(\gamma_j)$.
\end{definition}

Such functions $\gamma$ are parametrizations of one-dimensional open subsets of the intersection of complex curves with $\reals^d$ of bounded algebraic degree. Indeed, let $\gamma:I\to\reals^d$ be a rational parametrization of degree $m$ and write $\gamma_j(t)=f_j(t)/g_j(t)$ for coprime polynomials $f_j$, $g_j$ of degree bounded by $m$. By considering a Gr\"{o}bner basis for an elimination ordering for $t$ of the ideal generated by $\{g_jX_j-f_j\,\,|\,\, 1\le j\le d\}$ in $\complex[X_1, \ldots, X_d, t]$ and eliminating $t$, it follows that such a rational parametrization $\gamma$  defines the complex parametrization of an open subset of an algebraic curve in $\complex^d$. By analytic continuation, $\gamma$ must parametrize an open subset of an irreducible component of this curve. Invoking Dub\'{e}'s bound, the algebraic degree of this irreducible curve over $\complex$ is at most $\lesssim m^{2^d}$.

\begin{remark}\label{param}
In the special case where the ambient dimension $d=2$, we can obtain a better bound for the degree of the implicit algebraic equation by considering instead the resultant eliminating $t$ of $\{g_1X_1-f_1,g_2X_2-f_2\}$. This is a polynomial of degree at most $m$ in $\complex[X_1,X_2]$, so $\gamma$ parametrizes the open subset of the intersection with $\reals^2$ of an irreducible algebraic curve of (geometric or algebraic) degree at most $m$.
\end{remark}

\subsection{Combinatorial geometry}

We will require a variation of the Szemer\'{e}di-Trotter Theorem (which bounds the number of incidences between a set of points in the plane and a set of lines in terms of the number of points and lines) in our proof.

\begin{definition}[Admissible]
Let ${\bf \Gamma}$ be a finite collection of curves in $\reals^2$ and $C$ be a positive integer. The collection ${\bf\Gamma}$ is $C$\emph{-admissible}, if the following two conditions hold:
\begin{enumerate}
\item Any two distinct curves $\Gamma_1,\Gamma_2\in{\bf\Gamma}$ meet in at most $C$ points of $\reals^2$.
\item Any two distinct points in $\reals^2$ are incident to at most $C$ curves from ${\bf\Gamma}$.  
\end{enumerate}
\end{definition}

We will use the following variant due to Pach and Sharir \cite{szemtrot}.

\begin{theorem}[Pach-Sharir]\label{theorem:pachsharir}
Let ${\bf \Gamma}$ be a finite collection of curves and $Q$ be a finite collection of points in $\reals^2$. If ${\bf \Gamma}$ is $C$-admissible and each curve $\Gamma\in{\bf\Gamma}$ does not intersect itself, then the number of incidences, $I({\bf\Gamma},Q)$, between ${\bf \Gamma}$ and $Q$ satisfies,
\[I({\bf\Gamma},Q)=|\{(\Gamma,q)\in{\bf\Gamma}\times Q\,\,|\,\, q\in\Gamma\}|\lesssim_{C}|{\bf\Gamma}|^{\frac{2}{3}}|Q|^{\frac{2}{3}}+|{\bf\Gamma}|+|Q|.\]
\end{theorem}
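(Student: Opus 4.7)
The plan is to adapt Székely's crossing-number proof of the Szemerédi--Trotter theorem to the $C$-admissible setting. The two admissibility conditions will correspond precisely to the two quantities that enter the crossing lemma: condition (1) will bound the number of crossings between curve arcs, while condition (2) will bound the edge multiplicity when we build a planar multigraph.

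First I would dispose of curves that contain at most one point of $Q$: these contribute at most $|{\bf\Gamma}|$ incidences, which is absorbed into the right-hand side. For every remaining curve $\Gamma\in{\bf\Gamma}$, write $k_\Gamma\ge 2$ for the number of points of $Q$ it contains. Since $\Gamma$ does not self-intersect, it is a topological arc, and so I may traverse it and join each pair of consecutive points of $Q\cap\Gamma$ by the corresponding subarc of $\Gamma$. This yields a drawing in $\reals^2$ of a multigraph $G$ on vertex set $Q$, whose total number of edges is $e=\sum_\Gamma(k_\Gamma-1)=I({\bf\Gamma},Q)-|{\bf\Gamma}|$.

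Next I would read off two structural estimates from $C$-admissibility. By condition (2), any two points of $Q$ lie on at most $C$ curves of ${\bf\Gamma}$, so at most $C$ of the arcs above connect any given pair of vertices; hence the edge-multiplicity of $G$ is at most $C$. By condition (1), two edges coming from distinct curves $\Gamma,\Gamma'$ can cross only at a point of $\Gamma\cap\Gamma'$ not in $Q$, and there are at most $C$ such points for each unordered pair. Summing over pairs, the total number of crossings of $G$ is at most $C\binom{|{\bf\Gamma}|}{2}\le \tfrac{C}{2}|{\bf\Gamma}|^2$.

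Finally I would invoke the multigraph version of the crossing lemma: a plane multigraph on $n$ vertices with $e$ edges of multiplicity at most $C$ has, when $e\ge 4Cn$, at least $c\,e^3/(C n^2)$ crossings for an absolute constant $c>0$ (this follows from the simple-graph crossing lemma applied to a uniformly random simple subgraph, as in Pach--Sharir). If $e<4C|Q|$ then $I({\bf\Gamma},Q)\le e+|{\bf\Gamma}|\lesssim_C|{\bf\Gamma}|+|Q|$; otherwise, combining the crossing lemma with the bound from the previous paragraph gives $e^3\lesssim_C |{\bf\Gamma}|^2 |Q|^2$, whence $I({\bf\Gamma},Q)\lesssim_C |{\bf\Gamma}|^{2/3}|Q|^{2/3}+|{\bf\Gamma}|$. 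Combining the two cases yields the claimed bound. The only real subtlety is the multigraph variant of the crossing lemma; once admissibility condition (2) caps the multiplicity, this is a routine modification of the standard Székely argument, so the main step to get right is the bookkeeping that links $e$, the crossing count, and $I({\bf\Gamma},Q)$.
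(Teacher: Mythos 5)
The paper does not prove this result; it cites it directly to Pach and Sharir as \cite{szemtrot}, so there is no in-paper argument to compare your proposal against. Assessed on its own merits, your argument is the standard Sz\'ekely crossing-number proof of this incidence bound and it is essentially correct: admissibility condition (2) caps the edge multiplicity of the drawn multigraph at $C$; the simplicity of each curve ensures two arcs from the same curve never cross; admissibility condition (1), together with the observation that crossings must occur at intersection points not in $Q$, caps the total crossing count at $C\binom{|{\bf\Gamma}|}{2}$; and the multigraph crossing lemma then produces the $|{\bf\Gamma}|^{2/3}|Q|^{2/3}$ term.

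Two small bookkeeping remarks, neither of which affects the result. First, after discarding curves containing at most one point of $Q$, the edge count is $e=\sum_{\Gamma:\,k_\Gamma\ge 2}(k_\Gamma-1)$, which yields $I({\bf\Gamma},Q)\le e+2|{\bf\Gamma}|$ rather than the exact equality $e=I({\bf\Gamma},Q)-|{\bf\Gamma}|$ you wrote; the extra $|{\bf\Gamma}|$ is absorbed into the error term. Second, the greedy or random sparsification to a simple subgraph as you describe it gives a crossing lower bound of the form $c\,e^3/(C^3 n^2)$ rather than $c\,e^3/(Cn^2)$, because the edge count drops by a factor of $C$ and is then cubed; this weaker exponent is still perfectly sufficient here since the theorem only claims $\lesssim_C$, but you should be aware that the linear dependence on multiplicity you quoted requires a sharper argument than plain sparsification.
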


\subsection{Structural rigidity}

We will introduce some definitions from the theory of structural rigidity; we have adapted them from the standard ones to be more suited to our particular application of algebraic curves embedded in an ambient Euclidean space. The reader may consult \cite{rigid1} and \cite{rigid2} for some background, although we will not assume the reader has knowledge of this area and we define the terminology used in the paper below.

\begin{definition}[Framework]
Let $\mathcal{G}=\mathcal{G}(V,E)$ be a graph with vertex set $V$ and edge set $E$. Let $M$ be a subset of $\reals^d$. A $\mathcal{G}$\emph{-framework on }$M$ is a drawing of $\mathcal{G}$ in $\reals^d$ such that all vertices are distinct and lie on $M$.

If $\phi:V\to M$ is an injective map, the $\mathcal{G}$-framework on $M$ with each vertex $v\in V$ corresponding to $\phi(v)\in M$ will be denoted by $\mathcal{G}^M(\phi)$. 
\end{definition}

Fix an ambient dimension $d>1$ and a smooth function $D:\reals^{d}\times\reals^d\to\reals$ which we write as $D(x,y)$ for $x,y\in\reals^d$.

\begin{definition}[Flexible framework]
Let $\mathcal{G}(V,E)$ be a graph and $M\subset\reals^d$. The framework $\mathcal{G}^M(\phi)$ is $D$\emph{-flexible on }$M$ if there exists a continuous function $\Phi:V\times (-\delta,\delta)\to M$ for some $\delta>0$ such that, writing $\phi_t(v):=\Phi(v,t)$, it is true that $\phi_0= \phi$, there exists $t_0\in(-\delta,\delta)$ such that $\phi_{t_0}\ne\phi$ and, for each pair of edges $v,w\in E$, the edge function \[t\mapsto D(\phi_t(v),\phi_t(w))\] is constant.

We will say that $\mathcal{G}^M(\phi)$ is $D$\emph{-smoothly flexible on }$M$ if, for each $v\in V$, the map $t\mapsto \phi_t(v)$ is smooth.

The function $\Phi$ is a $D$-\emph{motion} of $\mathcal{G}^M(\phi)$.
\end{definition}

Informally, a $D$-flexible framework on $M$ is an vertex embedding of a graph into $M$ which can be moved continuously while preserving the value of $D$ along each edge of the graph; the $D$-motion is the function which describes this movement.

\begin{remark}
In the structural rigidity literature, it is common to ignore motions arising from symmetries of $D$ (e.g. rigid motions when $D$ is the square-distance function); for convenience, we will not do follow this convention.
\end{remark}

We will also be interested in infinitesimal $D$-motions.

\begin{definition}[Infinitesimally flexible framework]
Let $\mathcal{G}(V,E)$ be a graph and $M\subset\reals^d$ be a smooth embedded submanifold of $\reals^d$. The framework $\mathcal{G}^M(\phi)$ is $D$\emph{-infinitesimally flexible on }$M$ if for each $v\in V$ there exists a tangent vector $t(v)\in T_{\phi(v)}M\subset\reals^d$ such that for each pair of vertices $v,w\in V$, \[\nabla D(v,w)\cdot(t(v),t(w))=0.\]
\end{definition}

Informally, an infinitesimal motion is an assignment of velocity vectors to each embedded vertex in such a way that the value of $D$ along each edge remains constant up to first order.

\begin{remark}
By considering the derivative at $t=0$ of the edge function $t\mapsto D(\phi_t(v),\phi_t(w))$ in the definition of $D$-flexibility, it follows that if a framework $\mathcal{G}^M(\phi)$ on a smooth embedded submanifold $M\subset\reals^d$ is $D$-smoothly flexible then it is $D$-infinitesimally flexible.
\end{remark}

The \emph{bipartite graph }$\mathcal{K}_{m,n}$ is the graph with vertex set $V_1\cup V_2$ for disjoint sets $V_1$ and $V_2$ with cardinalities $|V_1|=m$, $|V_2|=n$ and edge set $\{uv\,\,|\,\,u\in V_1, v\in V_2\}$. 

The framework $\mathcal{K}_{m,n}^M(\phi)$ will be written as \[\mathcal{K}_M(\phi(V_1),\phi(V_2))\] and referred to as an $(m,n)$\emph{-framework on }$M$. Note that this is well-defined up to permutation of each vertex set $V_1, V_2$.

The \emph{complete graph} $\mathcal{K}_N$ is the graph with vertex set $V$ such that $|V|=N$ and edge set $\{uv\,\,|\,\,u,v\in V\}$. The \emph{triangular graph} $\mathcal{T}$ is the complete graph $\mathcal{K}_3$ on a set of three vertices $V$. The framework $\mathcal{T}^M(\phi)$ will also be written as \[\mathcal{T}_M(\phi(V)).\] This is well-defined up to permuting $V$. We will say that $\mathcal{T}^M(\phi)$ is \emph{based at }$\{x,y\}$ for $x,y\in M$ distinct if $\{x, y\}\subset\phi(V)$.

In the context of $D$-flexibility along curves, we will be interested in the following degeneracy condition.

\begin{definition}[Degenerate curve]\label{def:degen}
Let $\mathcal{G}$ be a graph. A smooth embedded curve $\Gamma\subset\reals^d$ is $(D,\mathcal{G})$\emph{-degenerate} if every $\mathcal{G}$-framework on $\Gamma$ is $D$-smoothly flexible. 
\end{definition}

Informally, $\Gamma$ is $(D,\mathcal{G})$-degenerate if every vertex embedding of $\mathcal{G}$ into $\Gamma$ can be moved smoothly along $\Gamma$ while preserving the value of $D$ along each edge.

\begin{remark}
When $D$ is clear from context and especially when we are considering the square distance function \[D(X,Y)=\norm{X-Y}^2:=\norm{X-Y}_{\reals^d}^2,\] we will often suppress reference to $D$ in the definitions above.
\end{remark}

\section{Step 1: Reduction to a two-dimensional problem}\label{sect:gen}

Let $D:\reals^d\times\reals^d\to \reals$ be a real polynomial in $2d$ variables. We will primarily be interested in the case where $D$ is the square distance function, $D(X,Y)=\norm{X-Y}^2$, but we will also consider more general $D$.

\begin{definition}[Distance polynomial]\label{def:distpoly}
Let $D:\reals^d\times\reals^d\to\reals$ be a real polynomial in $2d$ variables and $\gamma:I\to\reals^d$ be a smooth function. Then $D$ is a \emph{distance polynomial for} $\gamma$ if the following conditions hold:
\begin{enumerate}
\item $D(\gamma(\alpha),\gamma(\beta))=D(\gamma(\beta),\gamma(\alpha))$ for all $\alpha,\beta\in I$.
\item $D(\gamma(\alpha),\gamma(\beta))=0$ if and only if $\alpha=\beta$.
\end{enumerate}
\end{definition}

\begin{remark}
The square distance function $D(X,Y)=\norm{X-Y}^2$ is a distance polynomial for any injective $\gamma:I\to\reals^d$.
\end{remark}

Let $I$ be a non-empty open interval in $\reals$ and let $\gamma:I\to\reals^d$ be an injective real-analytic parametrization of a curve $\Gamma$ in $\reals^d$. Let $D$ be a distance polynomial for $\gamma$.  Let $P\subset \Gamma$ be a finite set of points lying on the curve. Write \[\Delta_D(P)=\{D(p,q)\,\,|\,\,p,q\in P\}\] for the image of $P\times P$ under $D$.

For each pair of points $(p,q) \in P^{2*}$, consider the smooth map $\xi_{pq}:I\to\reals^2$ given by \[\xi_{pq}(t)=(D(\gamma(t),p),D(\gamma(t),q)).\]

\begin{definition}[Elekes curves]\label{def:elekescurves}
The \emph{Elekes curve} $\Xi_{pq}$ is the curve in $\reals^2$ with smooth parametrization $\xi_{pq}$. The \emph{set of Elekes curves} corresponding to $P$ is the set \[{\bf\Xi}_P=\{\Xi_{pq}\,\,|\,\,(p, q)\in P^{2*}\}.\]
\end{definition}

A key observation is the following exponent gap result for the cardinality of $|\Delta(P)|$. This follows from a method of Elekes who used it to derive a quantitative bound \cite{elekes} in his proof of Purdy's Conjecture; the original proof of the conjecture (without a quantitative bound) is from Elekes-R\'{o}nyai \cite{elekesronyai}.

\begin{proposition}\label{prop:xixi}
Suppose that there exists a subset ${\bf \Xi}'\subset{\bf\Xi}_P$ which is $C$-admissible and consists only of curves which do not intersect themselves. If $|{\bf\Xi'}|\ge c_0|P|^2$, then \[|\Delta_D(P)|\gtrsim_{C, c_0} |P|^{1+\frac{1}{4}}.\] 
\end{proposition}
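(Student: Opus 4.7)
The plan is to follow the classical Elekes incidence-counting strategy applied to the Elekes curves in $\mathbf{\Xi}'$. Each point $r\in P$ has a unique preimage $t_r\in I$ with $\gamma(t_r)=r$. For any pair $(p,q)\in P^{2*}$, the value $\xi_{pq}(t_r)=(D(r,p),D(r,q))$ is an ordered pair drawn from $\Delta_D(P)\times\Delta_D(P)$. So I would take as point set
\[
Q \;=\; \Delta_D(P)\times\Delta_D(P)\subset\reals^2,
\]
and note that $|Q|=|\Delta_D(P)|^2$, while as curve set I would take $\mathbf{\Xi}'$, for which $|\mathbf{\Xi}'|\ge c_0|P|^2$. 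The goal is to double-count incidences between $\mathbf{\Xi}'$ and $Q$: a lower bound from the construction, an upper bound from the Pach--Sharir estimate (Theorem~\ref{theorem:pachsharir}).

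For the lower bound, the key observation is that for every $\Xi_{pq}\in\mathbf{\Xi}'$ and every $r\in P$, the point $\xi_{pq}(t_r)$ lies on $\Xi_{pq}$ and in $Q$. Since by hypothesis $\Xi_{pq}$ does not intersect itself, the parametrization $\xi_{pq}$ is injective on $I$, so the $|P|$ points $\{\xi_{pq}(t_r) : r\in P\}$ are distinct. Thus
\[
I(\mathbf{\Xi}',Q)\;\ge\;|\mathbf{\Xi}'|\cdot|P|\;\ge\;c_0|P|^3.
\]
For the upper bound, since $\mathbf{\Xi}'$ is $C$-admissible and every curve in it is simple, Theorem~\ref{theorem:pachsharir} gives
\[
I(\mathbf{\Xi}',Q)\;\lesssim_C\;|\mathbf{\Xi}'|^{2/3}|Q|^{2/3}+|\mathbf{\Xi}'|+|Q|
\;\lesssim_C\;|P|^{4/3}|\Delta_D(P)|^{4/3}+|P|^2+|\Delta_D(P)|^2.
\]

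Combining these two estimates, $c_0|P|^3$ must be dominated by one of the three terms on the right. The term $|P|^2$ is ruled out for $|P|$ large. If $|\Delta_D(P)|^2$ dominates, then $|\Delta_D(P)|\gtrsim|P|^{3/2}$, which is stronger than the asserted bound. Otherwise $|P|^{4/3}|\Delta_D(P)|^{4/3}$ dominates, which rearranges to $|\Delta_D(P)|^{4/3}\gtrsim_{C,c_0} |P|^{5/3}$, i.e., $|\Delta_D(P)|\gtrsim_{C,c_0} |P|^{5/4}$, as desired. There is no real obstacle here beyond checking that the quantitative versions of admissibility and simplicity suffice to invoke Pach--Sharir cleanly; the substantive work of the paper lies in producing such an admissible subfamily $\mathbf{\Xi}'$ with $|\mathbf{\Xi}'|\ge c_0|P|^2$, which is taken up in later sections.
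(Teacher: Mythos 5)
Your proof is correct and follows essentially the same route as the paper's: take $Q=\Delta_D(P)^2$, lower-bound incidences by roughly $|P|\cdot|\mathbf{\Xi}'|\gtrsim_{c_0}|P|^3$ using the non-self-intersection hypothesis to guarantee distinctness, upper-bound by Pach--Sharir with the trivial bound $|\mathbf{\Xi}'|\le|P|^2$, and compare terms. The only cosmetic difference is that the paper counts $|P|-2$ incidence points per curve (restricting to $r\in P\setminus\{p,q\}$) while you count all $|P|$; both give the same $\gtrsim|P|^3$ lower bound.
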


\begin{proof}
Each curve $\Xi_{pq}\in{\bf \Xi'}$ does not intersect itself, so it is incident to $|P|-2$ distinct points of the Cartesian product $\Delta_D(P)^2\subset\reals^2$, namely the set of points \[\{(D(r,p),D(r,q))\,\,|\,\,r\in P\setminus\{p,q\}\}.\] Therefore, the number of incidences $I({\bf\Xi'},\Delta_D(P)^2)$ satisfies \[I({\bf\Xi'},\Delta_D(P)^2)\ge(|P|-2)|{\bf\Xi'}|\gtrsim_{c_0}|P|^3.\]

Theorem~\ref{theorem:pachsharir} applied to ${\bf\Xi'}$ and $\Delta_D(P)^2$ gives \[I({\bf\Xi'},\Delta_D(P)^2)\lesssim_{C}|{\bf\Xi'}|^{\frac{2}{3}}|\Delta_D(P)|^{\frac{4}{3}}+|{\bf\Xi'}|+|\Delta_D(P)|^2.\] 

Combining the two bounds for the number of incidences and using the trivial bound $|{\bf\Xi'}|\le|P|^2$ yields
\[|P|^3\lesssim_{C,c_0}|P|^{\frac{4}{3}}|\Delta_D(P)|^{\frac{4}{3}}+|P|^2+|\Delta_D(P)|^2\]
which gives the stated lower bound on $\Delta_D(P)$.
\end{proof}

The strategy for proving lower bounds for $|\Delta_D(P)|$ will thus be to show that ${\bf\Xi}_P$ contains many distinct curves (i.e. $\gtrsim |P|^2$) and that a positive proportion of the set of distinct curves form an admissible set in the above sense.  

It will be convenient to reduce matters to curves which are well-behaved in the following sense.

\begin{definition}[Simple pair]\label{def:simple}
Let $D:\reals^{2d}\to\reals$ be a polynomial and let $\Gamma$ be a curve which has a real-analytic parametrization $\gamma:I\to\reals^d$ for some non-empty open interval $I\subset \reals$. The pair $[D,\Gamma]$ is \emph{simple} if the following conditions hold:
\begin{enumerate}
\item The parametrization $\gamma$ is injective and singularity-free.
\item The function $t\mapsto\ddot\gamma(t)$ does not vanish identically on $I$.
\item The polynomial $D$ is a distance polynomial for $\gamma$.
\item For each $\alpha, \beta\in I$, the map $I\to\reals^2$ given by \[t\mapsto (D(\gamma(t),\gamma(\alpha)),D(\gamma(t),\gamma(\beta)))\] is injective.
\item The map $\{(\alpha,\beta)\in I^2\,\,|\,\,\alpha\ne\beta\}\to\reals$ given by $(\alpha,\beta)\mapsto D(\gamma(\alpha),\gamma(\beta))$ is a submersion (i.e. its differential does not vanish).
\end{enumerate}
\end{definition}

\begin{remark}
For specific choices of $D$ and $\Gamma$, it is a technical matter to determine whether $[D,\Gamma]$ is simple; one general strategy which appears to work widely is to split $\Gamma$ into a controlled number of pieces and deal with each separately (see, for example, Section~\ref{sect:simplicity}). While the conditions above are chosen to be general enough to include the cases of principal interest in this paper but specific enough to make the subsequent arguments as elementary as possible, we do not believe that this class of $[D,\Gamma]$ is in any sense optimal or the most natural if one seeks to make a fully general statement analogous to Theorem~\ref{thm:algdist}.
\end{remark}

We will frequently make use of the following almost immediate consequence of the definition; informally, it states that any line segment or V-shaped graph (i.e. the bipartite graphs $\mathcal{K}_{1,1}$ and $\mathcal{K}_{2,1}$) with vertices on the curve can be moved along the curve while preserving the values of $D$ along the edges.

\begin{lemma}\label{lemma:difftopert}
Let $[D,\Gamma]$ be simple. Then $\Gamma$ is $(D,\mathcal{K}_{1,1})$-degenerate and $(D,\mathcal{K}_{2,1})$-degenerate.
\end{lemma}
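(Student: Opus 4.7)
The plan is to deduce the $\mathcal{K}_{1,1}$-case directly from condition~(5) of Definition~\ref{def:simple} via the regular level set theorem, and then to bootstrap to the $\mathcal{K}_{2,1}$-case by a real-analytic curve-selection argument on the simultaneous level set of the two edge constraints.

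For the $\mathcal{K}_{1,1}$-case, let $\phi(u)=\gamma(\alpha_0)$, $\phi(v)=\gamma(\beta_0)$ with $\alpha_0\ne\beta_0$. By condition~(5), the real-analytic function $F(\alpha,\beta):=D(\gamma(\alpha),\gamma(\beta))$ is a submersion on $\{\alpha\ne\beta\}\subset I^2$, so $F^{-1}(F(\alpha_0,\beta_0))$ is a real-analytic $1$-submanifold of $I^2$ near $(\alpha_0,\beta_0)$. Any non-constant real-analytic parametrization $(\alpha(t),\beta(t))$ of this level set through the base point then gives the desired smooth $D$-motion; $\alpha(t)\ne\beta(t)$ for small $t$ follows by continuity from $\alpha_0\ne\beta_0$, and $\gamma(\alpha(t)),\gamma(\beta(t))\in\Gamma$ is automatic.

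For the $\mathcal{K}_{2,1}$-case, let $w\in V_2$ be the central vertex with $\phi(w)=\gamma(\tau_0)$ and $u_1,u_2\in V_1$ be mapped to $\gamma(\alpha_1^0),\gamma(\alpha_2^0)$, all three parameters pairwise distinct; write $c_i=D(\gamma(\tau_0),\gamma(\alpha_i^0))$ and consider the real-analytic subset
\[
V:=\{(\tau,\alpha_1,\alpha_2)\in I^3\,:\,D(\gamma(\tau),\gamma(\alpha_i))=c_i,\ i=1,2\}
\]
containing the base point $p_0=(\tau_0,\alpha_1^0,\alpha_2^0)$. First I would verify that $p_0$ is not isolated in $V$: condition~(4) specialised to $\alpha=\beta=\tau$, combined with the symmetry of $D$ from condition~(3), makes $\alpha\mapsto D(\gamma(\tau),\gamma(\alpha))$ real-analytic and injective for every fixed $\tau\in I$, hence strictly monotonic. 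A standard intermediate-value and continuity-of-inverse-monotone-function argument then produces, for each $\tau$ in a neighbourhood of $\tau_0$, a unique $\alpha_i(\tau)$ close to $\alpha_i^0$ satisfying $D(\gamma(\tau),\gamma(\alpha_i(\tau)))=c_i$, yielding a continuous arc $\tau\mapsto(\tau,\alpha_1(\tau),\alpha_2(\tau))$ through $p_0$ inside $V$.

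Now Lojasiewicz's curve selection lemma for real-analytic sets applies: since $V$ is real-analytic and $p_0$ is non-isolated, there is a non-constant real-analytic arc $t\mapsto(\tau(t),\alpha_1(t),\alpha_2(t))\in V$ defined on $[0,\delta)$ with value $p_0$ at $t=0$, and the defining equations of $V$ together with uniqueness of analytic continuation extend it to a two-sided real-analytic parametrization on some $(-\delta',\delta)$. Declaring $\phi_t(w)=\gamma(\tau(t))$, $\phi_t(u_i)=\gamma(\alpha_i(t))$ produces the required $D$-smooth motion, with pairwise distinctness of the three moving vertices and non-triviality inherited by continuity from $p_0$ and from the arc. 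The main technical obstacle I expect is precisely the degenerate configuration in which the naive implicit-function-theorem parametrization of $V$ by $\tau$ breaks down, namely when $\partial_{\alpha_i}D(\gamma(\tau_0),\gamma(\alpha_i^0))=0$ for some $i$ (consistent with condition~(5), which only forbids both partials at $(\tau_0,\alpha_i^0)$ from vanishing simultaneously); the virtue of the curve selection approach is that it uniformly handles all such degeneracies by producing an intrinsic analytic parameter rather than privileging any single coordinate.
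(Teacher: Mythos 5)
Your $\mathcal{K}_{1,1}$-argument is essentially the paper's own: the paper invokes the implicit function theorem on the implicit equation $D(\gamma(\alpha),\gamma(\beta))=d$ and extracts a smooth local solution $\beta(\alpha)$, which is the same content as your ``regular level set of a submersion, take any non-constant analytic parametrization.'' The real divergence is in the $\mathcal{K}_{2,1}$-case. The paper disposes of it in one sentence (``Repeating the argument\ldots'') and implicitly parametrizes everything by the perturbed variable, i.e.\ it solves each edge constraint for the unknown endpoint via the implicit function theorem, which needs the \emph{individual} partial $\partial_{\alpha_i}D(\gamma(\tau_0),\gamma(\alpha_i^0))$ to be nonzero. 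You correctly observe that condition~(5) of Definition~\ref{def:simple} only forbids the full gradient of $(\alpha,\beta)\mapsto D(\gamma(\alpha),\gamma(\beta))$ from vanishing, not either single partial, so on the face of the stated hypotheses the naive two-step IFT parametrization could fail at configurations where $\partial_{\alpha_i}D$ vanishes. Your workaround --- the strict monotonicity of $\alpha\mapsto D(\gamma(\tau),\gamma(\alpha))$ forced by condition~(4), the resulting continuous arc through $p_0$ showing $p_0$ is non-isolated in $V$, the curve selection lemma to upgrade to an analytic arc, and the identity theorem to extend it two-sidedly --- is correct, and it is more careful than what the paper writes. It does invoke a heavier tool than the paper does anywhere in this lemma. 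Two remarks worth making: first, the paper elsewhere tacitly assumes the stronger nonvanishing of each partial (e.g.\ the assertion in Section~\ref{sect:rig} that $\mathcal{H}_{\alpha\beta}(\tau)\ne 0,\infty$ follows from simplicity), and when simplicity is actually verified in Sections~\ref{sect:simplicity} and~\ref{sect:areas} it is established in a form (each component of $\dot\gamma$ has a fixed sign, $\dot\gamma(t)\cdot(\gamma(t)-\gamma(\alpha))\ne 0$, etc.) that does give the stronger statement, so the degeneracy you guard against never materializes in the paper's applications; second, if one does take Definition~\ref{def:simple} literally, the curve selection step, or some equivalent device, genuinely is needed to cover the case $\partial_{\alpha_1}D_1=\partial_{\alpha_2}D_2=0$, where the two constraint surfaces fail to meet transversally in $I^3$. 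So your proof is correct and, relative to the written definition, fills a small gap the paper elides; at the cost of a less elementary tool than the paper's implicit-function-theorem chain.
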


\begin{proof}
Suppose that $\alpha_0, \beta_0 \in I$ are distinct and \[D(\gamma(\alpha_0), \gamma(\beta_0)) = d.\] By condition $5$ in the definition of a simple pair, the Implicit Function Theorem applies for the implicit equation \[D(\gamma(\alpha),\gamma(\beta))=d, (\alpha,\beta)\in I^2\] and we deduce that there exist small open neighbourhoods $U, V\subset I$ of $\alpha_0,\beta_0$ respectively and a smooth bijection $\beta:U\to V$ such that $\beta(\alpha_0)=\beta_0$ and \[D(\gamma(\alpha),\gamma(\beta(\alpha)))=d\text{ for all }\alpha\in U.\]

This implies that $\Gamma$ is $(D,\mathcal{K}_{1,1})$-degenerate. Repeating the argument (and replacing $U$ and $V$ with smaller neighborhoods of $\alpha_0, \beta_0$ if necessary) implies that it is $(D,\mathcal{K}_{2,1})$-degenerate.
\end{proof}

\begin{remark}\label{rem:uniqueline}
It should be observed that the $\beta$ constructed in the proof above is uniquely determined (locally) given the requirements that $\beta(\alpha_0)=\beta_0$ and \[D(\gamma(\alpha),\gamma(\beta(\alpha)))=d.\] Informally, this means that for simple $[D,\Gamma]$, given two points $p,q\in\Gamma$, when we move $p$ slightly along $\Gamma$ there is exactly one way to move $q$ along $\Gamma$ in such a way that the value of $D(p,q)$ is preserved throughout the motion.
\end{remark}

Our main general result which links algebra to rigidity along curves is the following. It states that the curve $\Gamma$ enjoys an exponent gap as in the statement of Theorem~\ref{thm:algdist} unless every triangle with vertices on the curve may be moved along the curve while preserving the values of $D$ along edges. Recall that $\mathcal{T}$ denotes the triangular graph.

\begin{theorem}\label{thm:general}
Suppose that $\Gamma\subset\reals^d$ is the singularity-free subset of a real algebraic curve of algebraic degree $m$ and $[D,\Gamma]$ is simple.

If $\Gamma$ is not $(D,\mathcal{T})$-degenerate then whenever $P\subset\Gamma$ is a finite subset, \[|\Delta_D(P)|\gtrsim_{m,d,\deg D}|P|^{1+\frac{1}{4}}.\]

Furthermore, if $\Gamma$ is rationally parametrized by $\gamma:I\to\reals^d$ then the implicit constant can be chosen to depend only on $\deg\gamma$ and $\deg D$.
\end{theorem}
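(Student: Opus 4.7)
The plan is to apply Proposition~\ref{prop:xixi} to the collection ${\bf\Xi}_P$ of Elekes curves. This requires showing (a) that ${\bf\Xi}_P$ contains $\gtrsim |P|^2$ distinct plane curves, (b) that they form a $C$-admissible family, and (c) that each is non-self-intersecting. Item (c) is immediate from condition (4) of simplicity of $[D,\Gamma]$, which states that each parametrization $\xi_{pq}$ is injective.

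For (b) I would first verify that each $\Xi_{pq}$ lies on a real algebraic plane curve of algebraic degree uniformly bounded in $(m, d, \deg D)$. By Remark~\ref{remark:difftoalg}, the coordinates of $\gamma$ satisfy polynomial relations derived from the defining ideal of $\Gamma$; eliminating the parameter $t$ from the system defining $\xi_{pq}$ via a Gr\"obner basis (of degree bounded by Dub\'e's theorem) produces the desired plane curve. In the rationally parametrized case, Remark~\ref{param} yields the sharper bound depending only on $(\deg \gamma, \deg D)$. B\'ezout's theorem in $\reals^2$ then bounds the intersection of any two distinct Elekes curves by the required constant, giving admissibility condition (1). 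For admissibility condition (2), the requirement that $\Xi_{pq}$ contain two prescribed points of $\reals^2$ translates into a polynomial system on $(p, q, t_1, t_2) \in \Gamma \times \Gamma \times I \times I$ of bounded degree whose solution set is generically zero-dimensional; B\'ezout again gives a $(m,d,\deg D)$-bounded count.

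The heart of the proof is (a), which is where the rigidity machinery of Sections~\ref{sect:admiss} and~\ref{sect:rig} enters. If $\Xi_{pq} = \Xi_{p'q'}$ for some $(p, q) \ne (p', q') \in P^{2*}$, then injectivity of both parametrizations produces a real-analytic bijection $\sigma : I \to I$ satisfying $D(\gamma(t), p) = D(\gamma(\sigma(t)), p')$ and $D(\gamma(t), q) = D(\gamma(\sigma(t)), q')$ identically in $t$ --- a 1-parameter motion preserving two $D$-edge lengths at a time. I would then show, using the uniqueness of local edge-preserving motions from Remark~\ref{rem:uniqueline} together with the algebraic apparatus set up for (b), that if an unbounded number of distinct pairs $(p,q)$ produce a single Elekes curve then these pairwise partial motions interlock into a smooth $D$-motion of an arbitrary $\mathcal{T}$-framework on $\Gamma$, forcing $\Gamma$ to be $(D,\mathcal{T})$-degenerate and contradicting the hypothesis. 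It follows that ${\bf\Xi}_P$ contains $\gtrsim_{m,d,\deg D} |P|^2$ distinct curves, and Proposition~\ref{prop:xixi} yields the stated lower bound; the rational case follows from the sharper dependencies recorded above. The principal obstacle is precisely this synthesis: each individual coincidence produces a bijection $\sigma$ preserving only two of the three edges of a triangle, so extracting a genuine triangular motion out of an abundance of such bijections is the novel rigidity-theoretic content of the argument and is the reason the theorem must invoke the separately developed rigidity theory rather than yielding to a direct algebraic attack.
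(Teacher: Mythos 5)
Your high-level plan matches the paper's --- Elekes reduction followed by rigidity --- but there are two concrete gaps in the execution.

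First, the justification of admissibility condition (2) does not work. You assert that the polynomial system encoding ``$\Xi_{pq}$ passes through two prescribed points $\delta_1,\delta_2$'' is ``generically zero-dimensional'' in $(p,q,t_1,t_2)$, and invoke B\'ezout. But that system being zero-dimensional is exactly equivalent to the $(2,2)$-framework $\mathcal{K}_\Gamma(\{p,q\},\{\gamma(t_1),\gamma(t_2)\})$ being rigid, and this \emph{can fail} even when $\Gamma$ is not $(D,\mathcal{T})$-degenerate: $(2,2)$-frameworks may be flexible while triangular ones are not. The paper's Proposition~\ref{prop:rigiddist} captures precisely this: it only guarantees that $(2,k)$-frameworks are rigid for $k$ large, and that $(2,2)$-rigidity holds when both $\tau_i$ lie in the same interval $I_r$ of a bounded partition of $I$. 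Because of this, the paper explicitly observes that the collection of distinct Elekes curves ``is not necessarily $K$-admissible,'' and fixes the problem by replacing each Elekes curve with its restriction to the subinterval $I_r$ carrying the most relevant points of $\Delta_D(P)^2$, losing only a bounded factor. Without this restriction step, condition (2) of admissibility is simply not established.

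Second, your treatment of (a) relies on an invalid inference. From $\Xi_{pq}$ and $\Xi_{p'q'}$ determining the same plane algebraic curve you conclude the existence of a real-analytic bijection $\sigma:I\to I$ with $D(\gamma(t),p)=D(\gamma(\sigma(t)),p')$ and $D(\gamma(t),q)=D(\gamma(\sigma(t)),q')$. But, as the paper notes, two Elekes curves sharing a defining irreducible polynomial ``may potentially be different (even potentially disjoint) as real algebraic curves.'' Coincidence as complex algebraic curves does not give a parametric correspondence. The paper avoids this by working locally near the line $X=0$ (where each $\xi_{pq}$ must pass through a point with vanishing first coordinate), decomposing the zero set of $G(X,Y)$ near $X=0$ into a bounded number of branches $\Theta_i$, and showing via Lemma~\ref{lemma:rigid} together with Proposition~\ref{prop:rigiddist} that each branch can lie on at most $\mu$ of the coincident Elekes curves. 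Your claim that the pairwise partial motions ``interlock into a smooth $D$-motion of an arbitrary $\mathcal{T}$-framework'' would need to be derived from these local branch-containment statements, not from a global $\sigma$; as written, that key transition is missing.
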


\section{Step 2: Checking admissibility}\label{sect:admiss}

\subsection{Exponent gap for rational curves}\label{sect:ratcurves}

In this section, we will deal with the conceptually easier case where the curve $\Gamma$ is a rational curve. To this end, assume that $\Gamma$ is parametrized by $\gamma:I\to\reals^d$ where $I$ is an open interval and the components of $\gamma(t)$ are real rational functions of $t$ not all of which are constant. Let $D:\reals^d\times\reals^d\to\reals$ be a distance polynomial for $\gamma$.

With this setup, the components of each Elekes curve parametrization $\xi_{pq}(t)$ are rational functions of $t,\alpha=\gamma^{-1}(p),\beta=\gamma^{-1}(q)$. Under the additional assumption that $[D,\Gamma]$ is simple (recall Definition~\ref{def:simple}), the parametrization $\xi_{pq}$ is not constant (by condition $4$) and each curve $\Xi_{pq}$ is now a rational plane curve which is irreducible of degree $\lesssim\deg D\deg\gamma$ (see Remark~\ref{param}).  We will reduce to this case where $[D,\Gamma]$ is simple in Section~\ref{sect:simplerat}.

By B\'{e}zout's Theorem, any two curves $\Xi_{pq}$, $\Xi_{p'q'}$ intersect in fewer than $(\deg D\deg\gamma)^2$ points, unless both curves correspond to the same algebraic curve and have a non-empty open subset in common. 

Write \[\xi_{pq}(t)=\left(\frac{f_1(t)}{g_1(t)}, \frac{f_2(t)}{g_2(t)}\right)\] for polynomials $f_1(t), f_2(t), g_1(t), g_2(t)$ in $t$ of degree $\lesssim\deg D\deg\gamma$ with coefficients which are polynomials in $\alpha, \beta$ of degree $\lesssim\deg D\deg\gamma$. Define $G_{\alpha,\beta}(X,Y)$ to be the resultant eliminating $t$ of $\{g_1X-f_1,g_2Y-f_2\}$. Then $G_{\alpha,\beta}$ has degree $\lesssim\deg D\deg\gamma$ and its coefficients are polynomials in $\alpha,\beta$ of $(\deg D\deg\gamma)$-bounded degree. 

The following lemma provides the aforementioned link to rigidity along the curve. It implies that if there are many incidences between Elekes curves then there is a flexible $(2,k)$-framework on $\Gamma$.

\begin{lemma}\label{lemma:rigid}
There exists a positive integer $\mu$ sufficiently large depending on $\deg\gamma$ and $\deg D$ with the following property: Let $k\ge 2$ and suppose that $\delta_1, \ldots, \delta_k\in\reals^2$ are distinct points. Consider $\mu$ distinct pairs $(\gamma(\alpha_1),\gamma(\beta_1)), \ldots, (\gamma(\alpha_\mu),\gamma(\beta_\mu))\in P^{2*}$ such that all $\mu$ curves $\Xi_{\gamma(\alpha_j)\gamma(\beta_j)}$ for $j=1,\ldots, \mu$ are incident to all $\delta_i$ for $1\le i\le k$. Then there exist $1\le j\le \mu$ and distinct $t_1, \ldots, t_k\in \Gamma$ such that the $(2,k)$-framework \[\mathcal{K}_\Gamma(\{\gamma(\alpha_j),\gamma(\beta_j)\},\{t_1, \ldots, t_k\})\] is $D$-smoothly flexible on $\Gamma$.
\end{lemma}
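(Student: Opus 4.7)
The plan is to analyze the real algebraic set
$$U = \{(\alpha, \beta) \in I^2 : G_{\alpha, \beta}(\delta_i) = 0 \text{ for } i = 1, \ldots, k\}\subseteq \reals^2,$$
which, by the hypothesis that each Elekes curve $\Xi_{\gamma(\alpha_j)\gamma(\beta_j)}$ passes through every $\delta_i$, contains all $\mu$ points $(\alpha_j, \beta_j)$. The key observation is that although $U$ is cut out by $k$ equations, it is also the zero set of the \emph{single} polynomial $\sum_{i=1}^k G_{\alpha, \beta}(\delta_i)^2$, whose degree is bounded in terms of $\deg D$ and $\deg\gamma$ only and, crucially, independently of $k$. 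Applying Milnor's bound on the number of connected components of a real algebraic set in $\reals^2$ then yields a $(\deg D, \deg\gamma)$-bounded count $C$ of isolated real points of $U$; choosing $\mu > C$ forces at least one of our $\mu$ points, call it $(\alpha_{j_0}, \beta_{j_0})$, to be non-isolated in $U$.

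Next I would lift to the semi-algebraic set
$$\tilde U = \{(\alpha, \beta, s_1, \ldots, s_k) \in I^{k+2} : \xi_{\gamma(\alpha)\gamma(\beta)}(s_i) = \delta_i,\ i = 1, \ldots, k\},$$
which contains the point $P_{j_0} = (\alpha_{j_0}, \beta_{j_0}, s_1^*, \ldots, s_k^*)$, where each $s_i^*$ is the unique real preimage of $\delta_i$ under the Elekes parametrization $\xi_{\gamma(\alpha_{j_0})\gamma(\beta_{j_0})}$, unique by condition~(4) of simplicity. The same uniqueness makes the natural projection $\pi: \tilde U \to U$ bijective. A short compactness/continuity argument --- take any sequence in $U$ converging to $(\alpha_{j_0}, \beta_{j_0})$, lift via $\pi^{-1}$, note that working locally the $s_i$-coordinates stay in a bounded subinterval of $I$, and then use uniqueness to identify the limit of any convergent subsequence with $P_{j_0}$ --- then shows that $P_{j_0}$ is non-isolated in $\tilde U$.

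I would then apply the Curve Selection Lemma from real semi-algebraic geometry to $\tilde U \setminus \{P_{j_0}\}$ at the limit point $P_{j_0}$, obtaining a non-constant real-analytic arc $\Phi: [0, \epsilon) \to \tilde U$ with $\Phi(0) = P_{j_0}$. Reparametrizing as $\Psi(\sigma) := \Phi(\sigma^2)$ on $(-\sqrt{\epsilon}, \sqrt{\epsilon})$ gives a smooth arc through $P_{j_0}$ on a symmetric interval. Writing $\Psi(\sigma) = (\alpha(\sigma), \beta(\sigma), s_1(\sigma), \ldots, s_k(\sigma))$, the motion of the $(2,k)$-framework on $\Gamma$ sending the two left vertices to $\gamma(\alpha(\sigma)), \gamma(\beta(\sigma))$ and the $k$ right vertices to $\gamma(s_1(\sigma)), \ldots, \gamma(s_k(\sigma))$ has every edge function equal to a coordinate of some $\delta_i$ by construction, hence constant in $\sigma$. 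Vertex distinctness at $\sigma = 0$ follows from injectivity of $\gamma$ together with injectivity of $\xi_{\gamma(\alpha_{j_0})\gamma(\beta_{j_0})}$ applied to the distinct $\delta_i$, and non-triviality of the motion follows from non-constancy of $\Psi$, completing the verification.

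The principal obstacle is keeping the threshold $\mu$ independent of $k$, which is essential because $k$ varies in the subsequent applications of the lemma. This is exactly the role of the sum-of-squares consolidation in the first step: it reduces the counting of isolated points to invoking Milnor's bound only in the plane, where the degree is $k$-independent. A naive direct application of Milnor's bound to $\tilde U \subset \reals^{k+2}$ would produce a $k$-dependent bound on $\mu$, defeating the intended downstream use. The transfer of non-isolation from $U$ to $\tilde U$ via the bijective semi-algebraic projection $\pi$ preserves the $k$-independence, so the planar count suffices.
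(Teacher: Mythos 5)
Your approach is genuinely different from the paper's: you work over the reals throughout (Milnor's bound on the number of connected components of a real algebraic set, plus the Curve Selection Lemma), whereas the paper counts zero-dimensional components of $Z_{\complex}(J)$ via the refined B\'ezout inequality (which uses only the two largest degrees and is therefore $k$-independent) and then transfers to real flexibility through Lemma~\ref{lemma:difftopert}. Your sum-of-squares consolidation is a clean alternative device for achieving $k$-independence, and working with real connected components is in some ways better matched to the real non-isolation you actually want. However, there is a concrete gap at the lifting step. You assert that condition~(4) of simplicity makes $\pi:\tilde U\to U$ \emph{bijective}, but condition~(4) only gives injectivity of $\xi_{\gamma(\alpha)\gamma(\beta)}$ in $t$, hence injectivity of $\pi$. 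Surjectivity --- that every $(\alpha,\beta)\in U$ admits real $s_i\in I$ with $\xi_{\gamma(\alpha)\gamma(\beta)}(s_i)=\delta_i$ --- is not a consequence of the simplicity axioms and fails in general: $G_{\alpha,\beta}(\delta_i)$ is a resultant, which can vanish because the two eliminating polynomials share a \emph{complex} common root, or a real root lying outside $I$, or undergo a degree drop; none of these puts $(\alpha,\beta)$ in $\pi(\tilde U)$. Consequently a sequence in $U\setminus\{(\alpha_{j_0},\beta_{j_0})\}$ converging to $(\alpha_{j_0},\beta_{j_0})$ need not lift to $\tilde U$, and the compactness argument for non-isolation of $P_{j_0}$ in $\tilde U$ breaks at its first step.

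This is precisely the point the paper's proof handles with Lemma~\ref{lemma:difftopert}: it uses the $(D,\mathcal{K}_{2,1})$-degeneracy guaranteed by simplicity (condition~(5) plus the Implicit Function Theorem) to manufacture, for each $i$, a genuine real analytic one-parameter family $(\alpha,\beta^{(i)}(\alpha),\tau_i(\alpha))$ of solutions through the given point, rather than trying to read a real family off the real zero set of the resultant after the fact. To repair your argument you would need a step of that flavour --- e.g.\ establish via the Implicit Function Theorem that $s_i^*$ is a non-degenerate common root and extends to a real analytic function of $(\alpha,\beta)$ along the non-isolated branch of $U$ --- so that non-isolation in $U$ actually transfers to non-isolation in $\tilde U$ before the Curve Selection Lemma is invoked. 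As a minor additional point, one should also rule out the possibility that $\sum_i G_{\alpha,\beta}(\delta_i)^2$ vanishes identically, since in that case $U=I^2$ and the non-isolation step is vacuous while the conclusion could still fail.
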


We will write $\mu(\deg\gamma,\deg D)$ for the smallest $\mu$ for which the conclusion holds.

\begin{proof}
For each point $\delta\in\reals^2$ we obtain a polynomial $H_{\delta}(X,Y):= G_{X,Y}(\delta)$ of $\deg D\deg\gamma$-bounded degree such that whenever $\xi_{\gamma(\alpha)\gamma(\beta)}(\tau)=\delta$, it follows that $H_{\delta}(\alpha,\beta)=0$. By dividing by suitable polynomial factors (depending on $\delta$) if necessary, we may assume without loss of generality that each $H_\delta$ is square-free. 

The $k$ polynomials $H_{\delta_j}$ have $\mu$ common zeroes, namely the set $\{(\alpha_j,\beta_j)\}_{j=1}^\mu$. By B\'{e}zout's Theorem, the number of zero-dimensional components of the ideal $J$ generated by the $H_{\delta_j}$ is $\deg D\deg\gamma$-bounded. Therefore, if we choose $\mu$ sufficiently large depending only on $\deg\gamma$ and $\deg D$, there is some $(\alpha_j, \beta_j)$ which lies on a one-dimensional component of $Z_\complex(J)$.   

For each $1\le i\le k$, there exists $\tau_i\in I$ such that 
\begin{equation}\label{disteqn}(D(\gamma(\tau_i),\gamma(\alpha_j)),D(\gamma(\tau_i),\gamma(\beta_j)))=\delta_i.\end{equation} By Lemma~\ref{lemma:difftopert}, for each $i$, we may perturb $\alpha_j$ and redefine $\tau_i$ and $\beta_j$ to vary smoothly with $\alpha_j$ while preserving (\ref{disteqn}). The point $(\alpha_j, \beta_j)$ therefore lies on a one-dimensional irreducible component of $Z_\complex(H_{\delta_i})$ whose intersection with $\reals^2$ is also one-dimensional and $(\alpha_j, \beta_j)$ may be perturbed along $Z_\complex(H_{\delta_i})\bigcap\reals^2$. Consequently, $(\alpha_j, \beta_j)$ may be perturbed along $Z_\complex(H_{\delta_i})\bigcap\reals^2$ simultaneously for all $1\le i\le k$.
\end{proof}

To prove Theorem~\ref{thm:general} (for rationally parametrized curves) we combine this lemma with two structural rigidity results about triangular frameworks on $\Gamma$. The first result essentially shows that there is a bounded $k$ so that if even one triangular framework on $\Gamma$ based at $p,q\in\Gamma$ is not flexible then any $(2,k)$-framework with $\{p,q\}$ as one of its vertex sets is not flexible. The second result states that if $\Gamma$ is not $\mathcal{T}$-degenerate then, after ignoring a small number of points of $P$, we may assume that for every pair of points $p,q\in P$, there is a triangular framework based at $p,q$ which is not flexible. The proofs of both propositions are deferred to Section~\ref{sect:rig}.

\begin{proposition}\label{prop:rigiddist} Let $(p,q)\in P^{2*}$ and suppose that there exists a triangular framework based at $\{p,q\}$ which is not infinitesimally flexible on $\Gamma$. 

Then, $I$ may be partitioned into a $(\deg\gamma, \deg D)$-bounded number of intervals with non-empty interiors \[I=\bigcup_r I_r\] such that whenever $\tau_1, \tau_2\in I$ are distinct points and the $(2,2)$-framework \[\mathcal{K}_\Gamma(\{p,q\},\{\gamma(\tau_1),\gamma(\tau_2)\})\] is infinitesimally flexible along $\Gamma$, the points $\tau_1$ and $\tau_2$ do not lie in the same $I_r$.

In particular, for $k$ sufficiently large depending only on $\deg\gamma$ and $\deg D$ and any distinct points $t_1, \ldots, t_k\in\Gamma$, the $(2,k)$-framework \[\mathcal{K}_\Gamma(\{p,q\},\{t_1, \ldots, t_k\})\] is not infinitesimally flexible on $\Gamma$.
\end{proposition}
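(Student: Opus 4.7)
The plan is to encode infinitesimal flexibility of bipartite and triangular frameworks based at $\{p,q\}$ through a single rational function $\phi(\tau)$, and then exploit its structure as a rational function to construct the partition. Let $a, b \in I$ satisfy $\gamma(a) = p$ and $\gamma(b) = q$, and write $F(\alpha, \beta) = D(\gamma(\alpha), \gamma(\beta))$. A tangent vector at $\gamma(\tau) \in \Gamma$ has the form $\lambda(\tau) \dot\gamma(\tau)$, and the infinitesimal edge condition for the edge $\gamma(\alpha)\gamma(\beta)$ becomes
\[
\lambda(\alpha)\, \partial_\alpha F(\alpha, \beta) + \lambda(\beta)\, \partial_\beta F(\alpha, \beta) = 0.
\]

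Applied to the $(2, k)$-framework on $\{p, q, \gamma(t_1), \ldots, \gamma(t_k)\}$ this gives $2k$ equations in $k+2$ unknowns; for each $j$, the two equations from the edges $pt_j,\, qt_j$ let me eliminate $\lambda(t_j)$, obtaining $\lambda(b) = \phi(t_j)\lambda(a)$, where
\[
\phi(\tau) := \frac{\partial_\alpha F(a,\tau)\, \partial_\beta F(b, \tau)}{\partial_\beta F(a, \tau)\, \partial_\alpha F(b, \tau)}.
\]
Thus (away from vanishings of the denominator factors) the $(2, k)$-framework is infinitesimally flexible iff $\phi(t_1) = \cdots = \phi(t_k)$. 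The triangular framework carries the additional edge $pq$, which imposes $u\lambda(a) + v\lambda(b) = 0$ with $u = \partial_\alpha F(a,b)$, $v = \partial_\beta F(a, b)$; it is infinitesimally flexible iff $\phi(\tau) = \psi := -u/v$.

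The heart of the proof is that $\phi$ is a non-constant rational function of $\tau$ of $(\deg\gamma, \deg D)$-bounded degree. Rationality and the degree bound are immediate from rationality of $\gamma$ and polynomiality of $D$. For non-constancy, the plan is to show that $\phi$ extends continuously to $\tau = a$ with value $\psi$. The symmetry $F(\alpha, \beta) = F(\beta, \alpha)$ combined with the property $F(\alpha, \beta) = 0 \Leftrightarrow \alpha = \beta$ from Definition~\ref{def:distpoly} forces a factorization $F(\alpha, \beta) = (\alpha - \beta)^2 H(\alpha, \beta)$ for some symmetric analytic $H$, with $H(a, a) \neq 0$ by the singularity-freeness of $\gamma$ built into simplicity of $[D, \Gamma]$. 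A Taylor expansion then yields $\partial_\alpha F(a,\tau)/(\tau-a) \to -2H(a,a)$ and $\partial_\beta F(a,\tau)/(\tau - a) \to 2 H(a, a)$ as $\tau \to a$, and cancellation together with the symmetries $\partial_\beta F(b, a) = u$, $\partial_\alpha F(b, a) = v$ gives the limit $\psi$. Since the hypothesis supplies some $\tau_0 \in I$ with $\phi(\tau_0) \neq \psi$, the function $\phi$ cannot be constant.

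Once this is established, $\phi$ has $(\deg\gamma, \deg D)$-boundedly many critical points and poles, and the product $\partial_\beta F(a, \tau)\partial_\alpha F(b, \tau)$ has boundedly many zeros. Partitioning $I$ at the union of these points produces boundedly many sub-intervals $I_r$ of non-empty interior on each of which $\phi$ is strictly monotonic and the non-degeneracy conditions hold; then distinct $\tau_1, \tau_2 \in I_r$ force $\phi(\tau_1) \neq \phi(\tau_2)$, precluding $(2,2)$-flexibility, which is exactly the contrapositive of the partition property claimed. The ``in particular'' clause is then immediate: a rational function of degree $N$ attains each value at most $N$ times, so for $k > N$ the equation $\phi(t_1) = \cdots = \phi(t_k)$ cannot hold at $k$ distinct points. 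The principal technical difficulty will be the limit computation $\phi(\tau) \to \psi$ at $\tau = a$, which requires care because $\phi$ is presented as a ratio in which both numerator and denominator vanish at $\tau = a$; the diagonal factorization of $F$ via the symmetric analytic $H$ is the key device that makes it work.
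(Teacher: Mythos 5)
Your proposal is correct and essentially reproduces the paper's argument. Your function $\phi(\tau)$ is exactly the reciprocal of the paper's auxiliary meromorphic function $\mathcal{H}_{ab}(\tau)$ (obtained there from the differential equation capturing $(2,1)$-flexibility rather than from the infinitesimal linear system, but algebraically identical), your limit $\phi(\tau)\to\psi$ as $\tau\to a$ is the same removable-singularity computation the paper performs inside Lemma~\ref{Hconsttriangle} when it sends $\tau\to\beta$ and concludes $a_\tau\to b$, and the partition of $I$ by the critical points and poles of the rational function (so that $\phi$ is injective on each piece) is the same final step.
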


\begin{proposition}\label{prop:rigidzar} 
Suppose that $\Gamma$ is not $(D,\mathcal{T})$-degenerate. Then there exists $P_0\subset P$ such that $|P_0|\gtrsim_{\deg\gamma, \deg D}|P|$ and for each pair $(p,q)\in P_0^{2*}$ there exists a triangular framework based at $\{p,q\}$ which is not $D$-infinitesimally flexible on $\Gamma$. 
\end{proposition}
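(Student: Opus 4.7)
The plan is to algebraize the infinitesimal flexibility condition for triangles on $\Gamma$. For distinct $\alpha,\beta,\tau \in I$, the triangle $\mathcal{T}_\Gamma(\gamma(\alpha),\gamma(\beta),\gamma(\tau))$ is $D$-infinitesimally flexible exactly when there is a non-trivial triple of scalars $(v_\alpha,v_\beta,v_\tau)$ for which all three edge-functions are first-order stationary; organizing these into the $3\times 3$ Jacobian of the edge-value map
\[
E(\alpha,\beta,\tau) := \bigl(D(\gamma(\alpha),\gamma(\beta)),\,D(\gamma(\beta),\gamma(\tau)),\,D(\gamma(\tau),\gamma(\alpha))\bigr),
\]
this amounts to the vanishing of $F(\alpha,\beta,\tau) := \det dE(\alpha,\beta,\tau)$. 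By Remark~\ref{remark:difftoalg}, $F$ can be expressed by a polynomial (or system of polynomials) in $(\alpha,\beta,\tau)$ of $(\deg\gamma,\deg D)$-bounded degree.

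The first, and key, step is to show $F \not\equiv 0$ on $I^3$. Suppose for contradiction that $F \equiv 0$. Then $dE$ has a non-trivial kernel everywhere, while by simplicity condition~5 applied to each edge $dE$ is nowhere zero, so generically the kernel is a rank-one analytic line field on an open dense subset of $I^3$. Using that the $2\times 2$ minor guaranteed by simplicity condition~5 is non-degenerate, the Implicit Function Theorem lets us integrate the kernel through any generic triple to obtain an analytic one-parameter family of triangles with constant edge-values — that is, a smooth flex. Analytic continuation along $\Gamma$, combined with Lemma~\ref{lemma:difftopert} to propagate through triples lying in the rank-drop locus, extends the flex to \emph{every} triangle, making $\Gamma$ into a $(D,\mathcal{T})$-degenerate curve and contradicting the hypothesis.

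With $F \not\equiv 0$ in hand, expand $F(\alpha,\beta,\tau) = \sum_k c_k(\alpha,\beta)\,\tau^k$ as a polynomial in $\tau$, so that not every coefficient $c_k$ is identically zero. A pair $(p,q) = (\gamma(\alpha),\gamma(\beta))$ is \emph{bad}, i.e. every triangle based at $\{p,q\}$ is infinitesimally flexible, precisely when $(\alpha,\beta)$ lies in the bad locus $B := \bigcap_k \{c_k = 0\}$, a proper algebraic subvariety of $I^2$ of $(\deg\gamma,\deg D)$-bounded degree. A standard B\'ezout-type argument — decomposing $B$ into irreducible components, each of which is either a fibre $\{\alpha\}\times I$ or $I\times\{\beta\}$ (contributing at most $|P|$ pairs) or else projects finite-to-one onto each factor (contributing a bounded number of partners per $\alpha\in\gamma^{-1}(P)$) — yields
\[
|B \cap \gamma^{-1}(P)^2| \lesssim_{\deg\gamma,\deg D} |P|.
\]

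Finally, view the bad pairs as edges of a graph on $P$ with $\lesssim_{\deg\gamma,\deg D} |P|$ edges, hence of bounded average degree; the Caro--Wei bound (or an explicit minimum-degree greedy selection) produces an independent set $P_0 \subset P$ of size $\gtrsim_{\deg\gamma,\deg D} |P|$ which contains no bad pair. For each $(p,q) \in P_0^{2*}$ the polynomial $F(\gamma^{-1}(p),\gamma^{-1}(q),\cdot)$ in $\tau$ is non-zero, so choosing any $\tau \in I$ at which it is non-vanishing (and distinct from $\gamma^{-1}(p),\gamma^{-1}(q)$, which is possible since the non-vanishing set is open and $\gamma$ is injective) provides a triangular framework based at $\{p,q\}$ which is not $D$-infinitesimally flexible on $\Gamma$, as required. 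The hard part is unquestionably the first step: inferring global smooth $(D,\mathcal{T})$-degeneracy from the pointwise algebraic condition $F \equiv 0$, since infinitesimal flexibility does not imply smooth flexibility in general and one must genuinely exploit analyticity together with the simplicity hypotheses to close this gap.
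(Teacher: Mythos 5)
Your proposal is correct and follows essentially the same route as the paper: the determinant $F=\det dE$ is, up to a nonvanishing factor, the quantity $\mathcal{H}_{\alpha\beta}(\tau)-\mathcal{H}_{\alpha\beta}(\alpha)$ of Section~\ref{sect:rig}, your key reduction that $F\equiv 0$ forces $(D,\mathcal{T})$-degeneracy (by integrating the kernel line field) is precisely Lemma~\ref{Hconsttriangle}, and the B\'ezout counting of the bad locus followed by an independent-set extraction mirrors the paper's final paragraph. Two small remarks: since $\gamma$ is rational you do not need Remark~\ref{remark:difftoalg} to see that $F$ is rational of bounded degree in $(\alpha,\beta,\tau)$, and the ``rank-drop locus'' you guard against is in fact empty, because simplicity condition~5 forces each row of $dE$ to be nonzero and hence, by the sparse structure of $dE$, its rank to be at least $2$ away from the diagonals, so the kernel is a genuine line field everywhere that $F=0$.
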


Armed with Lemma~\ref{lemma:rigid} and the two propositions, we can now prove Theorem~\ref{thm:general}.

\begin{proof}[Proof of Theorem~\ref{thm:general} for rationally parametrized curves]
Suppose that the curve $\Gamma$ is not $(D,\mathcal{T})$-degenerate. We replace $P$ with the set $P_0$  in the conclusion of Proposition~\ref{prop:rigidzar}  and lose a $(\deg\gamma, \deg D)$-bounded constant factor. 

With this reduction, one corollary of Proposition~\ref{prop:rigiddist} is that the Elekes curves, ${\bf\Xi}_P$, define many distinct algebraic curves. Indeed, suppose that the curves $\Xi_{p_1q_1},\ldots, \Xi_{p_rq_r}$ for distinct pairs $(p_1,q_1),\ldots, (p_r,q_r)\in P^{2*}$ all determine the same algebraic curve given by the irreducible polynomial $G(X,Y)$ of degree $\lesssim\deg D\deg\gamma$. Note that these curves may potentially be different (even potentially disjoint) as \emph{real} algebraic curves; recall the definitions from Section~\ref{sect:curves}.

The first coordinate of $\xi_{p_jq_j}(\gamma^{-1}(p_j))$ is $0$ and the first coordinate of $\xi_{p_jq_j}(t)$ for $t\ne\gamma^{-1}(p_j)$ is non-zero (by condition $2$ in Definition~\ref{def:distpoly} and the injectivity condition $4$ in Definition~\ref{def:simple}). There are only finitely many points with first coordinate $0$ lying on the zero set of $G$. Moreover, near the line $X=0$, the zero set of $G(X,Y)$ is the union of finitely many curves $\Theta_1,\ldots, \Theta_s$ whose number is $(\deg\gamma, \deg D)$-bounded and such that, for each $j=1,\ldots, r$, the curve $\Xi_{p_jq_j}$ contains one of the curves $\Theta_i$ entirely. By Proposition~\ref{prop:rigiddist}, there exists a positive integer $k=k(\deg\gamma, \deg D)$ depending only on $\deg\gamma$ and $\deg D$ such that for any $(p,q)\in P^{2*}$ and distinct $t_1, \ldots, t_k\in \Gamma$, the $(2,k)$-framework $\mathcal{K}((p,q),(t_1, \ldots, t_k))$ is not $D$-infinitesimally flexible along $\Gamma$. Pick $k(\deg\gamma, \deg D)$ points on each curve $\Theta_i$. By Lemma~\ref{lemma:rigid}, it follows that each $\Theta_i$ can be contained in at most $\mu(\deg\gamma, \deg D)$ of the curves $\Xi_{p_1q_1},\ldots, \Xi_{p_rq_r}$. Thus, $r\le s\mu(\deg\gamma, \deg D)$. Therefore the set ${\bf \Xi}_P$ contains a subset ${\bf\Xi'}$ consisting of $\gtrsim_{\deg\gamma,\deg D}|P|^2$ curves all of which determine different algebraic curves.

This set of curves ${\bf\Xi'}$ is not necessarily $K$-admissible for a $(\deg\gamma, \deg D)$-bounded $K$ since any two points of $\reals^2$ may potentially lie on several curves. To get around this, we replace each curve $\Xi_{pq}\in{\bf\Xi'}$ with the curve parametrized by the restriction $\xi_{pq}|_{I_k}$ to the interval $I_k\subset I$ from Proposition~\ref{prop:rigiddist} which contains the most points of $\Delta_D(P)\times\Delta_D(P)$; this number of points is at least $\gtrsim_{\deg\gamma, \deg D}|P|$ since the number of $I_k$ is $(\deg\gamma, \deg D)$-bounded. By Lemma~\ref{lemma:rigid} and Proposition~\ref{prop:rigiddist} it follows that this modified set ${\bf\Xi'}$ is $K$-admissible for a large enough $(\deg\gamma, \deg D)$-bounded $K$.

Applying Proposition~\ref{prop:xixi} completes the proof.

\end{proof}

\subsection{Exponent gap for algebraic curves}\label{sect:algcurves}

In this section, we prove Theorem~\ref{thm:algdist} for all algebraic curves; when the curve has a rational parametrization, the previous section usually gives better bounds.

Consider a real algebraic curve $\Gamma$ of algebraic degree $m$ with a real analytic parametrization $\gamma:I\to\reals^2$ such that $[D,\Gamma]$ is simple. We will reduce to this case in Section~\ref{sect:simplealg}. By considering the irreducible component of $\Gamma$ containing the most points of $P$ and losing a constant factor depending on $m$, we may assume that $\Gamma$ is irreducible. Let $\mathcal{I}_{X_1,\ldots,X_d}\subset\complex[X_1, \ldots, X_d]$ be a prime ideal generating $\Gamma$.

\begin{lemma}\label{lemma:2dcurve}
The parametrization $\xi_{pq}$ is injective and the Elekes curve $\Xi_{pq}$ is an open subset of an irreducible plane algebraic curve of $(d,m,\deg D)$-bounded degree. 
\end{lemma}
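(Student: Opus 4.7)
The plan is to dispatch injectivity directly from simplicity, then identify $\Xi_{pq}$ with an open piece of the image of $\Gamma$ under the polynomial map $(D(\cdot,p),D(\cdot,q))$ and bound the resulting ideal's complexity via Dub\'{e}'s theorem. For injectivity: since $[D,\Gamma]$ is simple, $\gamma$ is injective (Definition~\ref{def:simple}(1)), so there exist unique $\alpha,\beta\in I$ with $p=\gamma(\alpha)$, $q=\gamma(\beta)$, and Definition~\ref{def:simple}(4) asserts exactly that $t\mapsto (D(\gamma(t),\gamma(\alpha)),D(\gamma(t),\gamma(\beta)))=\xi_{pq}(t)$ is injective on $I$.

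For the algebraic structure, I would work over $\complex$ and introduce the ideal
\[
\mathcal{J}=\mathcal{I}_{X_1,\ldots,X_d}\cdot\complex[X_1,\ldots,X_d,U,V]+\bigl(U-D(X,p),\,V-D(X,q)\bigr)
\]
in $\complex[X_1,\ldots,X_d,U,V]$. Because $U$ and $V$ are determined by the $X_i$ modulo $\mathcal{J}$, the substitution $U\mapsto D(X,p)$, $V\mapsto D(X,q)$ yields a ring isomorphism $\complex[X,U,V]/\mathcal{J}\cong\complex[X]/\mathcal{I}_{X_1,\ldots,X_d}$, which is a one-dimensional integral domain (irreducibility of $\Gamma$). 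Hence $\mathcal{J}$ itself is prime and one-dimensional, and its contraction $\mathcal{J}\cap\complex[U,V]$ is prime and cuts out the Zariski closure in $\complex^2$ of the image of $Z_\complex(\mathcal{I})$ under $x\mapsto (D(x,p),D(x,q))$. Injectivity of $\xi_{pq}$ forces its image to be positive-dimensional in $\reals^2$, so this closure is at least one-dimensional; since it is the projection of a one-dimensional variety it is also at most one-dimensional. In two variables, a one-dimensional prime ideal is principal, so the contraction is generated by a single irreducible $G\in\complex[U,V]$ cutting out an irreducible plane algebraic curve $C$.

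For the degree bound, I would apply Dub\'{e}'s theorem to $\mathcal{J}$, whose generators have degree at most $\max(m,\deg D)$ in the $d+2$ variables $X_1,\ldots,X_d,U,V$: this produces a Gr\"{o}bner basis for an elimination order eliminating $X_1,\ldots,X_d$ of degree $\lesssim\max(m,\deg D)^{2^{d+2}}$, and by the remark following Dub\'{e}'s theorem, $G$ is (up to a scalar) an element of the induced Gr\"{o}bner basis of the contraction, hence of $(d,m,\deg D)$-bounded degree. Finally, $\Xi_{pq}=\xi_{pq}(I)$ is the image of a non-constant real analytic map obtained by composing the singularity-free $\gamma$ with a polynomial map, so away from the discrete set of critical points of $\xi_{pq}$ (and the singularities of $C$) it parametrizes a neighborhood of each of its points inside $C\cap\reals^2$ and is therefore a non-empty open subset of an irreducible plane algebraic curve in the sense of Section~\ref{sect:curves}. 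The main subtle point worth explicit verification is that $\mathcal{J}\cap\complex[U,V]$ has dimension exactly one: positive-dimensionality uses the non-constancy of $\xi_{pq}$ inherited from injectivity, while the upper bound comes from $Z(\mathcal{J})$ being one-dimensional.
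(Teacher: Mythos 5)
Your proof is correct for the lemma as stated, but it takes a genuinely different and in one respect leaner route than the paper's. You work in the $(d{+}2)$-variable ring $\complex[X,U,V]$ with $p,q$ held fixed as constants, establish that $\mathcal{J}$ is prime via the isomorphism $\complex[X,U,V]/\mathcal{J}\cong\complex[X]/\mathcal{I}_{X_1,\ldots,X_d}$, and conclude that the contraction to $\complex[U,V]$ is a height-one prime, hence principal with irreducible generator $G$; your Dub\'{e} bound then comes out cleanly as $\lesssim\max(m,\deg D)^{2^{d+2}}$. The paper instead works in the $(3d{+}2)$-variable ring $\complex[A,B,{\bf x}_p,{\bf x}_q,{\bf x}]$, treats $p,q$ as coordinates constrained to $\Gamma$, and shows by a dimension count that $J'=J\cap\complex[A,B,{\bf x}_p,{\bf x}_q]$ contains a polynomial $G_{p,q}(A,B)$ whose \emph{coefficients are polynomials in the coordinates of $p$ and $q$} of $(d,m,\deg D)$-bounded degree. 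This uniform polynomiality in $(p,q)$ is what your construction loses: with $p,q$ fixed, Dub\'{e}'s theorem says nothing about how the coefficients of $G$ vary with $p,q$. That property is not part of the lemma's statement, but it is used essentially in the proof of Lemma~\ref{lemma:rigidgen}, where $H_\delta({\bf x}_1,{\bf x}_2):=G_{{\bf x}_1,{\bf x}_2}(\delta)$ must be a polynomial on $\Gamma\times\Gamma$ of bounded degree. So your argument is a valid (and arguably tidier) proof of the stated lemma, with a nicer route to irreducibility via primality of $\mathcal{J}$, but if adopted verbatim it would force Lemma~\ref{lemma:rigidgen} to be reworked — most naturally by also carrying out the paper's parametrized construction to recover $G_{{\bf x}_1,{\bf x}_2}$. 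One small gloss: you assert that $G$ is, up to a scalar, an element of the induced Gr\"{o}bner basis. More precisely, every Gr\"{o}bner basis element of $(G)$ is a multiple of $G$, so any nonzero one bounds $\deg G$ from above; it is the \emph{reduced} Gr\"{o}bner basis of $(G)$ that is $\{G\}$ up to scaling.
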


\begin{proof}
By condition $4$ in Definition~\ref{def:simple}, $\xi_{pq}$ is injective. Write $\gamma(t)=(x_1(t),\ldots,x_d(t))$ and $\xi_{pq}(t)=(A(t),B(t))$ for real-analytic functions $x_1,\ldots,x_d:I\to\reals$ and $A,B:I\to\reals$.  It follows that \begin{equation}\label{ABequ}A-D({\bf x},{\bf x}_p)=0\end{equation}\[B-D({\bf x},{\bf x}_q)=0\] where ${\bf x}_p$ and ${\bf x}_q$ are the coordinates of $p$ and $q$ respectively.

We work in the polynomial ring $R=\complex[A,B,{\bf x}_p,{\bf x}_q,{\bf x}]$ of $3d+2$ variables. By a slight abuse of notation, we will write $\mathcal{I}_{\bf x}$ for the ideal in $R$ given by substituting ${\bf x}$ for $(X_1,\ldots,X_d)$ in the ideal $\mathcal{I}_{X_1,\ldots,X_d}$. We will also define $\mathcal{I}_{{\bf x}_p}$ and $\mathcal{I}_{{\bf x}_q}$ similarly. Let $J\subset R$ be the ideal generated by the ideals $\mathcal{I}_{{\bf x}_p}$, $\mathcal{I}_{{\bf x}_q}$, $\mathcal{I}_{\bf x}$ and the two polynomials on the left-hand side of (\ref{ABequ}). Since $A, B$ are visibly uniquely determined given ${\bf x}$, ${\bf x}_p$ and ${\bf x}_q$ it follows that $\dim Z_\complex(J)=3$.

Consider the projection $\pi:\complex^{3d+2}\to\complex^{2d+2}$ onto the first $2d+2$ coordinates. The Zariski-closure of the projection $\pi(Z_\complex(J))$ has dimension at most $3$. The ideals $\mathcal{I}_{{\bf x}_p}$ and $\mathcal{I}_{{\bf x}_q}$ may be viewed as ideals in $\complex[A,B,{\bf x}_p,{\bf x}_q]$ and their zero sets then each have dimension $d+3$ in $\complex^{2d+2}$, since each is simply the Cartesian product of the irreducible algebraic curve $\Gamma$ with a Euclidean space of dimension $d+2$. Therefore, in $\complex^{2d+2}$, \[\dim Z_\complex(\mathcal{I}_{{\bf x}_p})\cap Z_\complex(\mathcal{I}_{{\bf x}_q})\ge 4.\] Thus the ideal \[J'=J\cap\complex[A,B,{\bf x}_p,{\bf x}_q]\] which corresponds to the Zariski closure of $\pi(Z_\complex(J))$ must contain polynomials which do not lie in the ideal $\mathcal{I}_{{\bf x}_p}+\mathcal{I}_{{\bf x}_q}\subset\complex[A,B,{\bf x}_p,{\bf x}_q]$.

Now, consider any ordering eliminating ${\bf x}$ in $R$. By Dub\'{e}'s bound, there is a Gr\"{o}bner basis for $J$ (with respect to this ordering) consisting of polynomials with $(d,m,\deg D)$-bounded degrees. Since $J'$ is non-zero and contains polynomials which do not lie in $\mathcal{I}_{{\bf x}_p}+\mathcal{I}_{{\bf x}_q}$, it follows that there exists a polynomial $G_{p,q}(A,B)\in\complex[A,B]$ of $(d,m,\deg D)$-bounded degree whose coefficients are polynomials in the coordinates of $p$ and $q$ of $(d,m,\deg D)$-bounded degree, not all of which vanish identically for $p$ or $q$ on $\Gamma$, such that whenever there exists $t\in I$ such that $\xi_{pq}(t)=(A,B)$, it follows that $G_{p,q}(A,B)=0$. By taking the real or imaginary part of $G_{p,q}$, we may assume that $G_{p,q}(A,B)\in\reals[A,B]$ (for $p,q\in\Gamma\subset\reals^d$). In particular, $\Xi_{pq}$ is the subset of the intersection with $\reals^2$ of an irreducible algebraic curve of $(d,m,\deg D)$-bounded degree; by condition $5$ in Definition~\ref{def:simple}, it is an open subset.
\end{proof}

By B\'{e}zout's Theorem, it follows that two curves $\Xi_{pq},\Xi_{p'q'}$ with $(p,q)\ne(p',q')$ are either defined by the same irreducible polynomial and they intersect in a non-empty open set or they meet in a $(d,m,\deg D)$-bounded number of points.

\begin{lemma}\label{lemma:rigidgen}
There exists a positive integer $\mu$ sufficiently large depending on $m$, $d$ and $\deg D$ with the following property: Let $k\ge 2$ and suppose that $\delta_1, \ldots, \delta_k\in\reals^2$ are distinct points. Consider $\mu$ distinct pairs $(p_1,q_1), \ldots, (p_\mu,q_\mu)\in P^{2*}$ such that all $\mu$ curves $\Xi_{p_jq_j}$ for $j=1,\ldots, \mu$ are incident to all points $\delta_i$ for $1\le i\le k$. Then there exist $1\le j\le \mu$ and distinct $t_1, \ldots, t_k\in \Gamma$ such that the $(2,k)$-framework \[\mathcal{K}_\Gamma(\{p_j,q_j\},\{t_1, \ldots, t_k\})\] is $D$-flexible on $\Gamma$.
\end{lemma}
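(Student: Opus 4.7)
The plan is to mirror the proof of Lemma~\ref{lemma:rigid}, but now working with polynomials in the ambient coordinates $({\bf x}_p,{\bf x}_q)\in\complex^{2d}$ in place of the parameters $(\alpha,\beta)$. Using the polynomial $G_{p,q}(A,B)$ furnished by Lemma~\ref{lemma:2dcurve}, for each target point $\delta_i\in\reals^2$ I form the real polynomial $H_{\delta_i}({\bf x}_p,{\bf x}_q):=G_{{\bf x}_p,{\bf x}_q}(\delta_i)$, which has $(d,m,\deg D)$-bounded degree; by construction, $\Xi_{pq}\ni\delta_i$ forces $H_{\delta_i}(p,q)=0$. Let $J\subset\complex[{\bf x}_p,{\bf x}_q]$ be the ideal generated by $\mathcal{I}_{{\bf x}_p}$, $\mathcal{I}_{{\bf x}_q}$ and the $H_{\delta_i}$, so that $Z_\complex(J)\subset\Gamma\times\Gamma$ while all $\mu$ chosen pairs $(p_j,q_j)$ are real points of $Z_\complex(J)$.

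Since every generator of $J$ has $(d,m,\deg D)$-bounded degree, the refined B\'{e}zout Theorem bounds the number of zero-dimensional components of $Z_\complex(J)$ by some $N_0=N_0(d,m,\deg D)$; a further B\'{e}zout estimate (applied to the partial-derivative minors, as in the corollary bounding singularities of an irreducible algebraic curve) bounds the number of singular points of each $Z_\complex(H_{\delta_i})\cap(\Gamma\times\Gamma)$ by $(d,m,\deg D)$. Take $\mu$ strictly larger than the sum of these bounds. Then some pair, which I relabel $(p_1,q_1)$, lies on a positive-dimensional component $C$ of $Z_\complex(J)$ and is simultaneously a smooth point of each $Z_\complex(H_{\delta_i})\cap(\Gamma\times\Gamma)$. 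Since $C\subset\Gamma\times\Gamma$ has complex dimension at most two and is contained in each hypersurface $Z_\complex(H_{\delta_i})$, the local dimension of $C$ at $(p_1,q_1)$ is one, except in the degenerate case that some $H_{\delta_i}$ vanishes identically on the irreducible surface $\Gamma\times\Gamma$; that case admits a two-parameter deformation in $\Gamma\times\Gamma$ and the remaining steps only simplify.

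Next I produce a real one-parameter deformation lying in $C$. Since $\Xi_{p_1q_1}$ contains $\delta_i$, there is $t_i\in I$ with $\xi_{p_1q_1}(t_i)=\delta_i$, and distinctness of the $t_i$ follows from the injectivity clause of Definition~\ref{def:simple}. Lemma~\ref{lemma:difftopert} applied to the $(2,1)$-framework with base $\{p_1,q_1\}$ and apex $t_i$ produces, for each $i$, a real-analytic curve $\sigma_i:(-\epsilon,\epsilon)\to\Gamma\times\Gamma$ with $\sigma_i(0)=(p_1,q_1)$ along which the two edge distances are preserved and hence $H_{\delta_i}\circ\sigma_i\equiv 0$. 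At the smooth point $(p_1,q_1)$ the local irreducible branch of $Z_\complex(H_{\delta_i})\cap(\Gamma\times\Gamma)$ is unique and of complex dimension one; since the one-dimensional $C$ is contained in this variety and passes through $(p_1,q_1)$, it must equal that local branch. Hence $\sigma_i$ is a real parametrization of a one-dimensional germ of $C\cap\reals^{2d}$, and since a smooth one-dimensional complex curve admits at most one one-dimensional real-analytic germ at a point, the $\sigma_i$ all trace out the same real-analytic curve $\sigma:(-\epsilon,\epsilon)\to C\cap\reals^{2d}$, independent of $i$.

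With this common $\sigma(s)=(p(s),q(s))$ the flex is immediate: for each $i$, applying Lemma~\ref{lemma:difftopert} once more, together with the local uniqueness noted in Remark~\ref{rem:uniqueline}, yields a smooth curve $t_i(s)\in\Gamma$ with $t_i(0)=t_i$ satisfying $D(p(s),t_i(s))=\delta_i^{(1)}$ and $D(q(s),t_i(s))=\delta_i^{(2)}$ on a possibly smaller neighbourhood of $0$; distinctness of the $t_i(s)$ for small $s$ is inherited from that of the $t_i$. The resulting motion is the required $D$-flex of the $(2,k)$-framework $\mathcal{K}_\Gamma(\{p_1,q_1\},\{t_1,\ldots,t_k\})$. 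The principal obstacle is the synchronisation step — verifying that the a priori distinct curves $\sigma_i$ produced independently by Lemma~\ref{lemma:difftopert} actually coincide — and it is precisely the B\'{e}zout bound, together with the smooth-point hypothesis secured by the choice of $\mu$, that pins them down to the unique real germ of the common algebraic component $C$.
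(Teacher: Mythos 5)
Your argument follows exactly the same algebraic skeleton as the paper's: you form the polynomials $H_{\delta_i}({\bf x}_p,{\bf x}_q)=G_{{\bf x}_p,{\bf x}_q}(\delta_i)$, assemble the ideal $J$ together with $\mathcal{I}_{{\bf x}_p},\mathcal{I}_{{\bf x}_q}$, invoke B\'{e}zout to bound the number of zero-dimensional components of $Z_\complex(J)$, and deduce that for $\mu$ large some pair sits on a positive-dimensional component $C\subset\Gamma\times\Gamma$. You then finish, as the paper does, by invoking Lemma~\ref{lemma:difftopert} and Remark~\ref{rem:uniqueline} to turn the positive-dimensional real locus into a smooth flex of the $(2,k)$-framework. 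You have also correctly put your finger on a point that the paper treats very tersely: the ``synchronisation'' problem, namely that the unique motions of $q$ produced by Lemma~\ref{lemma:difftopert} for different apexes $t_i$ must be shown to coincide before one can claim a single flex of the whole bipartite framework. The paper's ``It therefore follows\dots'' elides exactly this point, so adding a smooth-point hypothesis to pin the $\sigma_i$ to the common germ of $C$ is a sensible instinct.

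The difficulty is with how you secure the smooth-point hypothesis. You bound, for each fixed $i$, the number of singular points of $Z_\complex(H_{\delta_i})\cap(\Gamma\times\Gamma)$ by a $(d,m,\deg D)$-bounded constant, and then take $\mu$ ``strictly larger than the sum of these bounds.'' There are $k$ hypersurfaces, each contributing its own singular locus, and these loci are a priori disjoint; so the total number of ``bad'' points you are trying to avoid grows linearly in $k$, forcing $\mu>N_0+kB$ for constants $N_0,B$ depending on $(d,m,\deg D)$. The lemma, however, demands a $\mu$ depending \emph{only} on $m$, $d$ and $\deg D$, uniformly in $k\ge 2$; this is essential in the application, where $k$ is taken at our discretion after $\mu$ is fixed. (If instead you mean by ``the sum'' only two numbers --- the zero-dimensional B\'{e}zout bound for $Z_\complex(J)$ and a single uniform singular-point bound --- then $\mu$ is admissible but the conclusion that $(p_1,q_1)$ is \emph{simultaneously} a smooth point of every $Z_\complex(H_{\delta_i})\cap(\Gamma\times\Gamma)$ no longer follows, since the pair you select need only dodge the singular locus of one of them.) So the synchronisation step, as you have set it up, either breaks the required uniformity in $k$ or leaves the coincidence of the $\sigma_i$ for $i\ge 2$ unjustified. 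One would need a mechanism that pins all the $\sigma_i$ to $C$ using only $(d,m,\deg D)$-bounded data --- for instance, working directly with the component $C$ of $Z_\complex(J)$ (whose singular locus is $(d,m,\deg D)$-bounded by B\'{e}zout applied to $J$ itself, independent of $k$) and then arguing separately, via the uniqueness in Remark~\ref{rem:uniqueline}, that the fibered condition $\delta_i\in\Xi_{pq}$ propagates along $C\cap\reals^{2d}$ once it holds at a single point. As written, though, this uniformity gap is a real one.

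Two further, smaller remarks. First, the degenerate case where some $H_{\delta_i}$ vanishes identically on $\Gamma\times\Gamma$ is dismissed as ``only simplifying,'' but in that case $Z_\complex(H_{\delta_i})\cap(\Gamma\times\Gamma)$ is two-dimensional, the ``unique local branch'' argument fails, and the synchronisation of that particular $\sigma_i$ has to be handled by a different route. Second, the corollary on singularities you cite is stated for irreducible curves; $Z_\complex(H_{\delta_i})\cap(\Gamma\times\Gamma)$ may be reducible, so a (routine, but not free) extension of the B\'{e}zout argument is needed to bound its singular locus.
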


\begin{proof}
For each $\delta\in\reals^2$, define the non-zero polynomial $H_\delta$ in the polynomial ring $\reals[{\bf x}_1,{\bf x}_2]$ of $2d$ variables by $H_\delta({\bf x}_1,{\bf x}_2):=G_{{\bf x}_1,{\bf x}_2}(\delta)$, where $G_{{\bf x}_1,{\bf x}_2}$ is the polynomial defined in the proof of Lemma~\ref{lemma:2dcurve}. Let $\mathcal{I}_\delta\subset\complex[{\bf x}_1,{\bf x}_2]$ be the ideal generated by $H_\delta$ and the ideals $\mathcal{I}_{{\bf x}_1}$, $\mathcal{I}_{{\bf x}_2}$. Then, whenever $\xi_{pq}(\tau)=\delta$ for some $\tau\in I$, the point $({\bf x}_p,{\bf x}_q)$ lies on a one-dimensional irreducible component of the zero set of $\mathcal{I}_\delta$.

Let $\mu$ be a positive integer. Consider the distinct points $\delta_1,\ldots, \delta_k\in\reals^2$ and the distinct pairs $(p_1,q_1), \ldots, (p_\mu,q_\mu)\in P^{2*}$. Suppose that for each $1\le i\le k$ and $1\le j\le \mu$, there exists $t_{ij}\in I$ such that $\xi_{p_j,q_j}(\tau_{ij})=\delta_i$. Then for each $1\le j\le \mu$, the point $({\bf x}_{p_j},{\bf x}_{q_j})$ lies on an irreducible component of the zero set of the ideal $\mathcal{I}=\mathcal{I}_{\delta_1}+\ldots+\mathcal{I}_{\delta_k}$ of dimension at most one. 

By B\'{e}zout's Theorem, the number of zero-dimensional components of $Z_\complex(I)$ is $(d,m,\deg D)$-bounded. Thus, for sufficiently large $\mu$, not all of the (distinct) points $({\bf x}_{p_j},{\bf x}_{q_j})$ can be zero-dimensional components.

By Lemma~\ref{lemma:difftopert}, whenever $\xi_{pq}(\tau)=\delta$ we may perturb $p$ along $\Gamma\subset\reals^d$ and get a unique perturbed $q\in\Gamma$ (for a perturbed $\tau$) while preserving this equation. It therefore follows that if some point $({\bf x}_{p_j},{\bf x}_{q_j})$ lies on a one-dimensional component of $Z_\complex(\mathcal{I})$, then we may perturb the points $p_j$ and $q_j$ along the curve $\Gamma\subset\reals^d$ while preserving $\xi_{p_j,q_j}(\tau_{ij})=\delta_i$ for all $1\le i\le k$ for appropriately perturbed $\tau_{ij}$.  
\end{proof}

The following analogues of Propositions~\ref{prop:rigiddist} and \ref{prop:rigidzar} are proved in Section~\ref{sect:rig}.

\begin{proposition}\label{prop:rigstrgen} 
Let $(p,q)\in P^{2*}$ and suppose that the there exists a triangular framework based at $\{p,q\}$ which is not $D$-infinitesimally flexible on $\Gamma$. 

Then, $I$ may be partitioned into a $(d,m,\deg D)$-bounded number of intervals with non-empty interiors \[I=\bigcup_r I_r\] such that whenever $\tau_1, \tau_2\in I$ are distinct points and the $(2,2)$-framework \[\mathcal{K}_\Gamma(\{p,q\},\{\gamma(\tau_1),\gamma(\tau_2)\})\] is $D$-infinitesimally flexible along $\Gamma$, the points $\tau_1$ and $\tau_2$ do not lie in the same $I_r$.

In particular, for $k$ sufficiently large, depending only on $d$ and $m$, and any distinct points $t_1, \ldots, t_k\in\Gamma$, the $(2,k)$-framework \[\mathcal{K}_\Gamma(\{p,q\},\{t_1, \ldots, t_k\})\] is not $D$-infinitesimally flexible on $\Gamma$.
\end{proposition}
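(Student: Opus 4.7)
The plan is to recast infinitesimal flexibility as polynomial conditions on the parameter $\tau\in I$ and then partition $I$ into intervals of monotonicity of an auxiliary rational function $F$ on $\Gamma$. Fix $p=\gamma(\alpha_p)$ and $q=\gamma(\alpha_q)$, and introduce the scalar quantities $A(\tau)=\nabla_{\mathbf{x}}D(p,\gamma(\tau))\cdot\dot\gamma(\alpha_p)$, $B(\tau)=\nabla_{\mathbf{y}}D(p,\gamma(\tau))\cdot\dot\gamma(\tau)$, $C(\tau)=\nabla_{\mathbf{x}}D(q,\gamma(\tau))\cdot\dot\gamma(\alpha_q)$, $E(\tau)=\nabla_{\mathbf{y}}D(q,\gamma(\tau))\cdot\dot\gamma(\tau)$, together with the analogous scalars $a=\nabla_{\mathbf{x}}D(p,q)\cdot\dot\gamma(\alpha_p)$ and $b=\nabla_{\mathbf{y}}D(p,q)\cdot\dot\gamma(\alpha_q)$ associated with the edge $pq$. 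By Remark~\ref{remark:difftoalg} each of these is the restriction to $\Gamma$ of a polynomial in the ambient coordinates of $(d,m,\deg D)$-bounded degree. A direct Cramer-style determinant computation on the $3\times 3$ (resp.\ $4\times 4$) linear system governing infinitesimal motions then shows that $\mathcal{T}_\Gamma(\{p,q,\gamma(\tau)\})$ is infinitesimally flexible iff $aB(\tau)C(\tau)+bA(\tau)E(\tau)=0$, while $\mathcal{K}_\Gamma(\{p,q\},\{\gamma(\tau_1),\gamma(\tau_2)\})$ is infinitesimally flexible iff $F(\tau_1)=F(\tau_2)$, where $F(\tau):=B(\tau)C(\tau)/(A(\tau)E(\tau))$.

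The key structural step is to rule out that $F$ could be a constant function on $\Gamma$. Using the symmetry $D(\mathbf{x},\mathbf{y})=D(\mathbf{y},\mathbf{x})$ together with the vanishing $D(\mathbf{x},\mathbf{x})=0$, a Taylor expansion near $\tau=\alpha_p$ shows that $A$ and $B$ both vanish to first order at $\tau=\alpha_p$ with $\lim_{\tau\to\alpha_p}B(\tau)/A(\tau)=-1$, while direct substitution gives $C(\alpha_p)=b$ and $E(\alpha_p)=a$. Hence $F(\tau)\to -b/a$ as $\tau\to\alpha_p$; so if $F$ were identically a constant that constant would have to equal $-b/a$, which would force $aBC+bAE\equiv 0$ on $\Gamma$. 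But then every triangular framework based at $\{p,q\}$ would be infinitesimally flexible, contradicting the hypothesis. Thus $F$ is non-constant.

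Given $F$ non-constant, Remark~\ref{remark:difftoalg} converts the conditions $F'(\tau)=0$, $A(\tau)=0$, and $E(\tau)=0$ into polynomial systems on $\Gamma$ of $(d,m,\deg D)$-bounded degree. Their common vanishing locus on $\Gamma$ is zero-dimensional; by B\'ezout's theorem, invoking Dub\'e's Gr\"obner-basis bound when $d>2$, together with the Thom-Milnor bound on the number of connected components of $\Gamma$, this locus contains only a $(d,m,\deg D)$-bounded number of points. Partition $I$ into the open intervals $\{I_r\}$ obtained by removing the $\gamma$-preimages of these points; on each $I_r$ the function $F$ is regular and has no critical point, so it is strictly monotone and hence injective. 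Consequently whenever $\tau_1\ne\tau_2$ lie in the same $I_r$ one has $F(\tau_1)\ne F(\tau_2)$, so $\mathcal{K}_\Gamma(\{p,q\},\{\gamma(\tau_1),\gamma(\tau_2)\})$ is not infinitesimally flexible. The ``in particular'' clause follows by pigeonhole: for $k$ exceeding the number of intervals, any distinct $t_1,\dots,t_k\in\Gamma$ must contain two lying in a common $I_r$, and this non-flexible $(2,2)$-subframework then prevents any non-trivial infinitesimal motion of the enclosing $(2,k)$-framework.

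The main obstacle is the algebraic complexity control: showing that $F$ and its critical locus are cut out on $\Gamma$ by polynomials of $(d,m,\deg D)$-bounded degree, so that B\'ezout furnishes a partition of bounded size. The simplicity of $[D,\Gamma]$ is essential here, as it guarantees that $A,B,C,E$ are generically non-zero on $\Gamma$ so that $F$ is a well-defined non-constant rational function of uniformly bounded algebraic complexity, and ensures the Cramer-rule derivations run without degeneration.
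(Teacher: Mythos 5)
Your proof is correct and takes essentially the same approach as the paper. Your function $F(\tau)=B(\tau)C(\tau)/(A(\tau)E(\tau))$ is precisely the paper's auxiliary function $\mathcal{H}_{\alpha\beta}(\tau)$ (introduced in Section~\ref{sect:rig}), you just derive it from the $3\times 3$ and $4\times 4$ linear systems governing infinitesimal motions rather than from the ODE~(\ref{diffH}) for the $(2,1)$-framework. The remaining steps — non-constancy of $F$ from the hypothesis that some triangle based at $\{p,q\}$ is rigid, bounding the zeros of $F'$ via Remark~\ref{remark:difftoalg} and B\'ezout, partitioning $I$ at those zeros so that $F$ is injective on each piece, and pigeonhole for the $(2,k)$ conclusion — match the paper's argument. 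Two small remarks: (1) your non-constancy argument uses the limit value $F(\tau)\to -b/a$ at $\tau\to\alpha_p$, which implicitly requires the second mixed partial $g_{12}(\alpha_p,\alpha_p)$ to be non-zero so L'H\^opital applies; the paper sidesteps this by proving Lemma~\ref{Hconsttriangle} directly via the limit of the velocity coefficient $b_\tau$, but the two arguments are equivalent in substance. (2) In the pigeonhole step you should make explicit that a non-trivial infinitesimal motion of the $(2,k)$-framework forces $(t_p,t_q)\neq(0,0)$ (because $B,E\neq 0$ on $I$ by simplicity), so the restriction to the bad $(2,2)$-subframework is itself non-trivial; this is also left implicit in the paper.
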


\begin{proposition}\label{prop:rigidzargen} 
Suppose that $\Gamma$ is not $(D,\mathcal{T})$-degenerate. Then there exists $P_0\subset P$ such that $|P_0|\gtrsim_{d,m}|P|$ and for each pair $(p,q)\in P_0^{2*}$ there exists a triangular framwork based at $\{p,q\}$ which is not $D$-infinitesimally flexible on $\Gamma$. 
\end{proposition}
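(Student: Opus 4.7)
The plan is to translate the hypothesis that $\Gamma$ is not $(D,\mathcal{T})$-degenerate into a polynomial non-vanishing condition, show that the set of ``bad'' pairs $(p,q)\in\Gamma\times\Gamma$ (those for which every triangle based at $\{p,q\}$ is $D$-infinitesimally flexible) is a proper algebraic subvariety of $\Gamma\times\Gamma$ of $(d,m,\deg D)$-bounded complexity, and then extract $P_0$ by a simple graph-theoretic argument.

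First, parametrizing the triangle vertices as $\gamma(\alpha),\gamma(\beta),\gamma(\tau)$, the infinitesimal flexibility condition for a $\mathcal{T}$-framework on $\Gamma$ reduces to the vanishing of a $3\times 3$ determinant $\Phi(\alpha,\beta,\tau)$ whose entries have the form $A(\sigma,\rho):=\nabla_p D(\gamma(\sigma),\gamma(\rho))\cdot\dot\gamma(\sigma)$: each tangent vector along $\Gamma$ is a scalar multiple of $\dot\gamma$, and the three edge constraints assemble into a $3\times 3$ homogeneous linear system in the three scalars. The hypothesis forces $\Phi\not\equiv 0$ on $I^3$: if $\Phi$ vanished identically, the map $F(\alpha,\beta,\tau):=\bigl(D(\gamma(\alpha),\gamma(\beta)),D(\gamma(\beta),\gamma(\tau)),D(\gamma(\tau),\gamma(\alpha))\bigr)$ would have Jacobian of rank at most $2$ everywhere, so the constant rank theorem on the dense open locus of rank-$2$ points would produce $1$-dimensional smooth fibers of $F$ through generic triangles, and real analyticity would extend the resulting motions across the rank-drop locus to every triangle, contradicting non-degeneracy.

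Next, I would translate $\Phi$ algebraically. By Remark~\ref{remark:difftoalg}, $\dot\gamma$ admits a rational description on $\Gamma$ in terms of cofactors of the Jacobian of the polynomials defining $\Gamma$, so clearing denominators rewrites $\Phi$ as a polynomial $\tilde\Phi(p,q,r)$ of $(d,m,\deg D)$-bounded degree, well-defined modulo the ideal of $\Gamma\times\Gamma\times\Gamma$. The bad set
\[\tilde S:=\{(p,q)\in\Gamma\times\Gamma : \tilde\Phi(p,q,r)=0\text{ for all }r\in\Gamma\}\]
is then cut out in $\Gamma\times\Gamma$ by finitely many polynomials obtained by expanding $\tilde\Phi$ in the coordinates of $r$ and demanding each coefficient vanish on $\Gamma$, with degrees $(d,m,\deg D)$-bounded via Dub\'{e}'s bound. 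By Step~1, $\tilde S$ is a proper subset of the two-dimensional variety $\Gamma\times\Gamma$, hence of dimension at most~$1$. B\'{e}zout's theorem bounds the number and degrees of the irreducible components of $\tilde S$ in terms of $(d,m,\deg D)$; each $1$-dimensional component either dominates one factor with generic fibers of bounded cardinality, or is contained in a finite union of fibers $\{p_i\}\times\Gamma$, and in either case contributes $\lesssim_{d,m,\deg D}|P|$ points of $P\times P$. Summing over the bounded number of components yields at most $\lesssim_{d,m,\deg D}|P|$ bad pairs in $P^{2*}$.

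Form the graph on vertex set $P$ with edges given by the bad pairs: its average degree is $O_{d,m,\deg D}(1)$, so by the standard greedy argument (or Tur\'{a}n's theorem) there exists an independent set $P_0\subset P$ with $|P_0|\gtrsim_{d,m,\deg D}|P|$, and for every $(p,q)\in P_0^{2*}$ some $r\in\Gamma$ yields a triangular framework on $\{p,q,r\}$ which is not $D$-infinitesimally flexible. The main obstacle I anticipate is the implication $\Phi\equiv 0\Rightarrow\Gamma$ is $(D,\mathcal{T})$-degenerate needed in Step~1: upgrading identically vanishing $\Phi$ to smooth flexibility of \emph{every} triangle, not merely the generic one, requires a careful real-analytic argument at the Jacobian rank-drop locus of $F$. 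A secondary bookkeeping challenge is to verify that the polynomial conditions carving out $\tilde S$ really do generate a proper ideal modulo the ideal of $\Gamma\times\Gamma$, and to track the dependence on $(d,m,\deg D)$ cleanly through the Gr\"{o}bner basis step.
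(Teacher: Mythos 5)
Your overall strategy matches the paper's: encode infinitesimal flexibility of the triangle at $(\alpha,\beta,\tau)$ as the vanishing of a polynomial condition on $\Gamma\times\Gamma\times\Gamma$ (via Remark~\ref{remark:difftoalg} plus Dub\'{e}), observe that non-$(D,\mathcal{T})$-degeneracy makes the resulting closed set proper and of bounded degree, bound the bad pairs with B\'{e}zout, and extract $P_0$. Your $3\times 3$ determinant $\Phi$ encodes the same condition as the paper's derivative $\mathcal{H}'_{\alpha\beta}(\tau)$, so the two algebraic translations are interchangeable. The B\'{e}zout bookkeeping and the Tur\'{a}n/greedy extraction of $P_0$ are also fine; the paper works directly in $\Gamma^3$ with the variety $\Theta=\{\mathcal{H}'_{\alpha\beta}(\tau)=0\}$ and slices, while you project to $\Gamma\times\Gamma$, but these are equivalent.

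The substantive issue is your Step~1, the implication ``$\Phi\equiv 0\Rightarrow\Gamma$ is $(D,\mathcal{T})$-degenerate,'' which you correctly flag as the weak point. The paper proves exactly this in Lemma~\ref{Hconsttriangle}, but by a different and more robust route: it constructs the motion of a triangle $(\alpha_0,\tau_0,\beta_0)$ concretely from the $(1,1)$- and $(2,1)$-framework motions guaranteed by Lemma~\ref{lemma:difftopert}, observes that both candidate motions of $\beta$ satisfy the explicit first-order ODE $\beta'=1/h(\alpha,\beta)$ with the same initial data, and concludes by Picard--Lindel\"{o}f uniqueness that they agree. Your constant-rank-theorem sketch is in the right spirit, but the patch you suggest --- extending motions across a ``rank-drop locus'' by real analyticity --- is not justified as stated: a family of fibers through nearby triangles need not limit to a non-trivial motion at a boundary point, and nothing about analyticity alone forces it to. That said, the worry turns out to be moot: under the simplicity hypothesis (condition~5 in Definition~\ref{def:simple}, the submersion condition) the Jacobian of $F(\alpha,\beta,\tau)=\bigl(D(\gamma(\alpha),\gamma(\beta)),D(\gamma(\beta),\gamma(\tau)),D(\gamma(\tau),\gamma(\alpha))\bigr)$ has rank exactly~$2$ at every triple of distinct parameters once $\Phi\equiv 0$ --- one checks directly that the $2\times 2$ minors cannot all vanish given that each row of $DF$ is nonzero --- so there is no rank-drop locus, the constant rank theorem applies uniformly, and no extension argument is needed. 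If you either make that observation or substitute the ODE/uniqueness argument of Lemma~\ref{Hconsttriangle}, your Step~1 closes and the rest of the proof goes through.
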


By replacing $P$ with $P_0$ and arguing as in the rational curves case, we obtain Theorem~\ref{thm:general} for real algebraic curves.

\section{Proof of rigidity results}\label{sect:rig}

In this section, we prove Propositions~\ref{prop:rigiddist}, \ref{prop:rigidzar}, \ref{prop:rigstrgen} and \ref{prop:rigidzargen}.

Let $\Gamma$ be a curve with injective singularity-free analytic parametrization $\gamma:I\to\reals^d$ where $I\subset\reals$ is an open interval. We assume in the sequel that $\Gamma$ is a real algebraic curve of algebraic degree $m$.

Write $D=D(X,Y)$ for $X,Y\in\reals^d$. Suppose that $[D,\Gamma]$ is simple.

It will be convenient to construct a suitable analytic function obtained by considering a suitable differential equation which captures the rigidity in our setup; this will allow us to extract suitable bounds. We firstly perform this construction before proceeding to prove our rigidity results.

Since $[D,\Gamma]$ is simple, any $(2,1)$-framework on $\Gamma$ is $D$-flexible. If $U\subset I$ is an open interval, $\bar\tau: U\to I$, $\bar\beta:U\to I$ are smooth and $d_1,d_2\ge 0$ is fixed such that 
\[D(\gamma(\bar\tau(\alpha)),\gamma(\alpha))=d_1\] \[D(\gamma(\bar\tau(\alpha)),\gamma(\bar\beta(\alpha)))=d_2\] for all $\alpha\in U$, 
then differentiating with respect to $\alpha$ yields \[\bar\tau'\dot\gamma(\bar\tau)\cdot D_X(\gamma(\bar\tau),\gamma(\alpha))+\dot\gamma(\alpha)\cdot D_Y(\gamma(\bar\tau),\gamma(\alpha))=0\]
\[\bar\tau'\dot\gamma(\bar\tau)\cdot D_X(\gamma(\bar\tau),\gamma(\bar\beta))+\bar\beta'\dot\gamma(\bar\beta)\cdot D_Y(\gamma(\bar\tau),\gamma(\bar\beta))=0\]
 where \[D_X=\left(\frac{\partial D}{\partial X_1},\ldots,\frac{\partial D}{\partial X_d}\right), \,\,D_Y=\left(\frac{\partial D}{\partial Y_1},\ldots,\frac{\partial D}{\partial Y_d}\right).\]

Eliminating $\bar\tau'$ yields the differential equation
\begin{eqnarray}\label{diffequD}&\bar\beta'\big(\dot\gamma(\bar\beta)\cdot D_Y(\gamma(\bar\tau),\gamma(\bar\beta))\big)\big(\dot\gamma(\bar\tau)\cdot D_X(\gamma(\bar\tau),\gamma(\alpha))\big)& \\
-&\big(\dot\gamma(\alpha)\cdot D_Y(\gamma(\bar\tau),\gamma(\alpha))\big)\big(\dot\gamma(\bar\tau)\cdot D_X(\gamma(\bar\tau),\gamma(\bar\beta))\big)&=0. \nonumber\end{eqnarray}

For each $\alpha, \beta\in I$, define the meromorphic function $\mathcal{H}_{\alpha\beta}=\mathcal{H}^D_{\alpha\beta}$ on $I$ by 
\[\mathcal{H}_{\alpha\beta}(\tau):=\frac{\big(\dot\gamma(\beta)\cdot D_Y(\gamma(\tau),\gamma(\beta))\big)\big(\dot\gamma(\tau)\cdot D_X(\gamma(\tau),\gamma(\alpha))\big)}{\big(\dot\gamma(\alpha)\cdot D_Y(\gamma(\tau),\gamma(\alpha))\big)\big(\dot\gamma(\tau)\cdot D_X(\gamma(\tau),\gamma(\beta))\big)}.\] Potential singularities at $\tau=\alpha, \beta$ may be removed by setting 
\[
\mathcal{H}_{\alpha\beta}(\alpha)=\mathcal{H}_{\alpha\beta}(\beta)=\frac{\dot\gamma(\beta)\cdot D_Y(\gamma(\alpha),\gamma(\beta))}{\dot\gamma(\alpha)\cdot D_X(\gamma(\alpha),\gamma(\beta))}
\]
Since $[D, \Gamma]$ is simple, it follows that \[\mathcal{H}_{\alpha\beta}(\tau)\ne 0, \infty\] for all $\alpha,\beta,\tau\in I$.

Then the differential equation (\ref{diffequD}) is equivalent to \begin{equation}\label{diffH}\mathcal{H}_{\alpha\,\bar\beta(\alpha)}(\tau)=\frac{1}{\bar\beta'(\alpha)}.\end{equation}

For each $\alpha, \beta\in I$, the derivative $\mathcal{H}'_{\alpha\beta}$ has isolated zeroes on $I$ or it vanishes identically and $\mathcal{H}_{\alpha\beta}$ is constant. 

\begin{lemma}\label{Hconsttriangle}
If $\mathcal{H}_{\alpha\beta}$ is constant then any triangular framework based at $\{\gamma(\alpha), \gamma(\beta)\}$ is $D$-infinitesimally flexible on $\Gamma$. Furthermore, if $\mathcal{H}_{\alpha\beta}$ is constant for every $\alpha,\beta\in I$ then $\Gamma$ is $\mathcal{T}$-degenerate.
\end{lemma}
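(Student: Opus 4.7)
My plan hinges on translating the formula for $\mathcal{H}_{\alpha\beta}(\tau)$ into an algebraic criterion for infinitesimal flexibility of the triangle with vertices $\gamma(\tau),\gamma(\alpha),\gamma(\beta)$, and then observing that the removable-singularity value $\mathcal{H}_{\alpha\beta}(\alpha)$ at $\tau=\alpha$ is exactly that critical value. Concretely, since $\Gamma$ is one-dimensional the three vertex velocities are scalar multiples $s_\tau\dot\gamma(\tau),s_\alpha\dot\gamma(\alpha),s_\beta\dot\gamma(\beta)$ of the tangent, and the three edge-preservation conditions $\nabla D\cdot(\cdot,\cdot)=0$ give a $3\times 3$ homogeneous linear system in $(s_\tau,s_\alpha,s_\beta)$ whose six non-zero coefficients are the pairings $\dot\gamma\cdot D_X,\,\dot\gamma\cdot D_Y$ on the three edges. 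Non-trivial solvability amounts to the vanishing of this determinant which, after dividing by factors that are non-zero thanks to simplicity of $[D,\Gamma]$, collapses to the equation $\mathcal{H}_{\alpha\beta}(\tau)=-F/E$, where $E=\dot\gamma(\alpha)\cdot D_X(\gamma(\alpha),\gamma(\beta))$ and $F=\dot\gamma(\beta)\cdot D_Y(\gamma(\alpha),\gamma(\beta))$; I would then verify that $\mathcal{H}_{\alpha\beta}(\alpha)=-F/E$ also, making the criterion read $\mathcal{H}_{\alpha\beta}(\tau)=\mathcal{H}_{\alpha\beta}(\alpha)$, so constancy of $\mathcal{H}_{\alpha\beta}$ in $\tau$ certifies it for every $\tau_0$ and yields the first claim.

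For the \emph{furthermore} statement I would lift this infinitesimal conclusion to a smooth flex using Lemma~\ref{lemma:difftopert}. Given a triangular framework on $\Gamma$ with vertices $\gamma(\tau_0),\gamma(\alpha_0),\gamma(\beta_0)$, the $(2,1)$-flexibility at apex $\gamma(\tau_0)$ furnishes a smooth motion $\alpha\mapsto(\bar\tau(\alpha),\bar\beta(\alpha))$ on a neighbourhood of $\alpha_0$ preserving both edges meeting at the apex, with derivative $\bar\beta'(\alpha)=1/\mathcal{H}_{\alpha,\bar\beta(\alpha)}(\bar\tau(\alpha))$ by \eqref{diffH}. Applying the first claim at each time $\alpha$ to the base $\{\gamma(\alpha),\gamma(\bar\beta(\alpha))\}$, whose $\mathcal{H}$ is constant by hypothesis, forces $\bar\beta'(\alpha)$ to coincide with the derivative of the $\mathcal{K}_{1,1}$-motion that preserves the edge $\gamma(\alpha)\gamma(\bar\beta(\alpha))$, so $(d/d\alpha)D(\gamma(\alpha),\gamma(\bar\beta(\alpha)))\equiv 0$ throughout the motion. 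The third edge length is therefore constant, exhibiting $\Gamma$ as $(D,\mathcal{T})$-degenerate.

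The step I anticipate to be most delicate is the evaluation of $\mathcal{H}_{\alpha\beta}$ at the diagonal $\tau=\alpha$, because in the principal case of the square distance $D(X,Y)=\|X-Y\|^2$ both factors $\dot\gamma(\tau)\cdot D_X(\gamma(\tau),\gamma(\alpha))$ and $\dot\gamma(\alpha)\cdot D_Y(\gamma(\tau),\gamma(\alpha))$ vanish at $\tau=\alpha$, producing a $0/0$ indeterminacy rather than a plain substitution. The key algebraic input is the diagonal identity $\dot\gamma(\alpha)\cdot(D_X+D_Y)(\gamma(\alpha),\gamma(\alpha))=0$, obtained by differentiating $D(\gamma(\alpha),\gamma(\alpha))\equiv 0$ in $\alpha$; it makes a first-order Taylor expansion about $\tau=\alpha$ go through cleanly and returns $\mathcal{H}_{\alpha\beta}(\alpha)=-F/E$. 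Equivalently, a more geometric route interprets $\mathcal{H}_{\alpha\beta}(\alpha)$ as the limit of $1/\bar\beta'(\alpha_0)$ as the apex coalesces with $\gamma(\alpha_0)$: by continuity this limit must agree with the $\mathcal{K}_{1,1}$-motion derivative of the single edge $\gamma(\alpha_0)\gamma(\beta_0)$, which is exactly the infinitesimal flexibility condition at the opposite extreme of apex placement.
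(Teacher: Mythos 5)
Your proof is correct, and it tracks the paper's at the conceptual level but departs from it at two technical points that are worth flagging. For the first claim the underlying idea is the same as the paper's — the removable-singularity value $\mathcal{H}_{\alpha\beta}(\alpha)=\mathcal{H}_{\alpha\beta}(\beta)$ is exactly the threshold at which the base-edge constraint becomes compatible with the two apex-edge constraints — but you make this fully explicit by writing down the $3\times 3$ homogeneous system and factoring its determinant to get $\mathcal{H}_{\alpha\beta}(\tau)=-F/E$; the paper instead deduces $a_\tau\to b$ by a limit $\tau\to\beta$ and never isolates the determinant criterion. Your route has the advantage of cleanly exhibiting the infinitesimal-flexibility locus as the fibre $\{\mathcal{H}_{\alpha\beta}=\mathcal{H}_{\alpha\beta}(\alpha)\}$, which is exactly the information used elsewhere in Section~\ref{sect:rig}, and it also surfaces a sign that the paper appears to drop: as you compute via the diagonal Taylor expansion, the removable value is $-F/E$, whereas the paper's displayed formula lacks the minus sign (the subsequent ``$a_\tau\to b$'' step only works with your sign, so you have in fact corrected a typo). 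For the second claim, the paper constructs two separate motions $\beta_1$ (preserving the base edge) and $\beta_2$ (coming from the apex $\mathcal{K}_{2,1}$-flex), shows they satisfy the same ODE $\beta_i'=1/h(\alpha,\beta_i)$, and invokes Picard--Lindel\"of uniqueness to identify them. You take a shorter route: with $\bar\beta$ coming from the apex flex, equation \eqref{diffH} and the constancy hypothesis give $\bar\beta'(\alpha)=1/\mathcal{H}_{\alpha\bar\beta(\alpha)}(\alpha)=-E/F$, and substituting this directly into $\frac{d}{d\alpha}D(\gamma(\alpha),\gamma(\bar\beta(\alpha)))=E+\bar\beta'F$ gives $0$, so no appeal to ODE uniqueness is needed. (You also correctly read \eqref{diffH} as evaluating $\mathcal{H}$ at $\bar\tau(\alpha)$ rather than at a free $\tau$, which is how the paper's displayed equation should read.) Both approaches are sound; yours is a bit more elementary and self-contained.
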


\begin{proof}
Suppose that $\mathcal{H}_{\alpha\beta}$ is constant. Let $\tau\in I\setminus\{\alpha,\beta\}$. The framework \[\mathcal{K}_\Gamma(\{\gamma(\alpha), \gamma(\beta)\},\{\gamma(\tau)\})\] is $D$-infinitesimally flexible for each $\tau\in I$ (in fact, it is $D$-flexible) and there exist $a_\tau, b_\tau \in\reals$ such that 
\[(a_\tau\dot\gamma(\tau),\dot\gamma(\alpha))\cdot\nabla D(\gamma(\tau),\gamma(\alpha))=(a_\tau\dot\gamma(\tau), b_\tau\dot\gamma(\beta))\cdot\nabla D(\gamma(\tau),\gamma(\beta))=0.\]

Since $\mathcal{H}_{\alpha\beta}(\tau)=b_\tau^{-1}$, it follows that $b=b_\tau$ is independent of $\tau$. Taking $\tau\to\beta$, it follows that $a_\tau\to b$. Thus, for each $\tau\in I\setminus\{\alpha,\beta\}$, 
\begin{eqnarray}
&(a_\tau\dot\gamma(\tau),\dot\gamma(\alpha))\cdot\nabla D(\gamma(\tau),\gamma(\alpha))&\nonumber\\
=&(a_\tau\dot\gamma(\tau),b\dot\gamma(\beta))\cdot\nabla D(\gamma(\tau),\gamma(\beta))&\nonumber\\
=&(\dot\gamma(\alpha),b\dot\gamma(\beta))\cdot\nabla D(\gamma(\alpha),\gamma(\beta))&=\,0.\nonumber
\end{eqnarray}
In other words, $\mathcal{T}_\Gamma(\{\gamma(\alpha), \gamma(\beta), \gamma(\tau)\})$ is $D$-infinitesimally flexible.

Now suppose that $\mathcal{H}_{\alpha\beta}$ is constant for all $\alpha,\beta\in I$. Write $h(\alpha,\beta)$ for this constant. Observe that \[\frac{1}{h}:I^2\to\reals\] defines a real-analytic function. 

Let $\alpha_0, \tau_0, \beta_0$ be distinct points in $I$ and let $d_1=D(\gamma(\tau_0), \gamma(\alpha_0))$, $d_2=D(\gamma(\tau_0), \gamma(\beta_0))$, $d_3=D(\gamma(\alpha_0), \gamma(\beta_0))$. By perturbing $\alpha_0$, we obtain an open neighbourhood $U_1\subset I$ of $\alpha_0$ and a smooth function $\beta_1:U_1\to\ I$ such that $\beta_1(\alpha_0)=\beta_0$ and \[d_3=D(\gamma(\alpha), \gamma(\beta_1(\alpha)))\] for every $\alpha\in U_1$. Similarly, we obtain an open neighbourhood $U_2\in I$ of $\alpha_0$ and smooth functions $\tau:U_2\to I$, $\beta_2:U_2\to I$ such that $\tau(\alpha_0)=\tau_0$, $\beta_2(\alpha_0)=\beta_0$ and
\[
d_1=D(\gamma(\tau(\alpha)), \gamma(\alpha))
\]
\[
d_2=D(\gamma(\tau(\alpha)), \gamma(\beta_2(\alpha))).
\]

By (\ref{diffH}), \[\beta_1'(\alpha)=\frac{1}{h}(\alpha,\beta_1(\alpha))\]\[\beta_2'(\alpha)=\frac{1}{h}(\alpha,\beta_2(\alpha))\] for all $\alpha$ in a suitable small open neighbourhood $U\subset I$ of $\alpha_0$. Since $\beta_1(\alpha_0)=\beta_2(\alpha_0)=\beta_0$, the Picard-Lindel\"{o}f Theorem on the uniqueness of solutions to first-order equations implies that \[\beta_1(\alpha)\equiv\beta_2(\alpha)\] for $\alpha\in U$.

Therefore, the framework $\mathcal{T}_\Gamma(\{\gamma(\alpha_0), \gamma(\tau_0), \gamma(\beta_0)\})$ is $D$-smoothly flexible on $\Gamma$.
\end{proof}

We now turn to the proof of Propositions~\ref{prop:rigidzar} and \ref{prop:rigidzargen}.

\begin{proof}[Proof of Proposition~\ref{prop:rigidzar}]
Observe that $\mathcal{H}'_{\alpha,\beta}(\tau)$ is a rational function in $\alpha, \beta, \tau$ of degree $\lesssim(\deg D)(\deg\gamma)$. 

Suppose that $\Gamma$ is not $\mathcal{T}$-degenerate. Then there exists a pair of distinct points $\alpha_0, \beta_0\in I$ such that $\mathcal{H}'_{\alpha_0\beta_0}$ does not vanish identically. Let $\tau_0\in I$ be such that \[\mathcal{H}'_{\alpha_0\beta_0}(\tau_0)\ne 0.\] 

Let $\alpha\in I$. If $\mathcal{H}'_{\alpha\beta_\alpha}(\tau_\alpha)\ne 0$ for some $\beta_\alpha, \tau_\alpha\in I$ then the number of $\beta\in I$ such that $\mathcal{H}'_{\alpha\beta}(\tau_\alpha)=0$ is $(\deg D,\deg\gamma)$-bounded. Furthermore, there are at most finitely many $\alpha\in I$ such that $\mathcal{H}'_{\alpha\beta}(\tau)=0$ for all $\beta, \tau\in I$. Let $S\subset I$ denote the set of such $\alpha$. Then there exists a subset $P_0$ of $P\setminus S$ such that $|P_0|\gtrsim_{\deg D,\deg\gamma}|P\setminus S|$ and, for each pair $(\gamma(\alpha),\gamma(\beta))\in P_0^{2*}$, the function $\mathcal{H}'_{\alpha\beta}$ does not vanish identically. Now, $\mathcal{H}'_{\alpha\beta_0}(\tau_0)$ vanishes for a $(\deg D, \deg\gamma)$-bounded number of $\alpha\in I$ so in fact $|S|$ is $(\deg D, \deg\gamma)$-bounded. Consequently $|P_0|\gtrsim_{\deg D, \deg\gamma}|P|$.
\end{proof}

\begin{proof}[Proof of Proposition~\ref{prop:rigidzargen}]
By Remark~\ref{remark:difftoalg}, the differential equation \[\mathcal{H}'_{\alpha,\beta}(\tau)=0\] is equivalent (by clearing denominators) to a system of polynomial equations in the $\reals^d$-coordinates of the triple $(\gamma(\alpha), \gamma(\beta), \gamma(\tau))$ where all the polynomials have $(d, m, \deg D)$-bounded degree. In particular, it defines a Zariski-closed subset $\Theta$ of $\Gamma\times\Gamma\times\Gamma$.

Suppose that \[\mathcal{H}'_{\alpha_0,\beta_0}(\tau_0)\ne 0\] for points $x_0=\gamma(\tau_0), p_0=\gamma(\alpha_0), q_0=\gamma(\beta_0)$ on $\Gamma$. Then $\Theta$ is proper and, by B\'{e}zout's Theorem, has $(d, m, \deg D)$-bounded degree.

Let $p\in\Gamma$. If $\Gamma\times \{p\}\times \Gamma$ is not contained in $\Theta$ then there exists $x(p)\in\Gamma$ such that $\{(x(p),p)\}\times\Gamma$ is not contained in $\Theta$. Since $\{(x(p),p)\}\times\Gamma$ and $\Theta$ may be generated by polynomials of $(d, m, \deg D)$-bounded degree, B\'{e}zout's Theorem implies that they intersect in a $(d, m, \deg D)$-bounded number of points. Hence, for such $p$, the number of $q\in\Gamma$ such that $\Gamma\times\{p\}\times\{q\}$ is contained in $\Theta$ is $(d, m, \deg D)$-bounded.

Now, $\Gamma\times \{p\}\times \Gamma$ can be completely contained in $\Theta$ for at most finitely many $p\in \Gamma$. Let $S$ denote the set of such $p$. There thus exists a subset $P_0$ of $P\setminus S$ such that $|P_0|\gtrsim_{d,m}|P\setminus S|$ and, for each pair $(p,q)\in P_0^{2*}$, \[\mathcal{H}'_{\gamma^{-1}(p)\gamma^{-1}(q)}\] does not vanish identically on $I$.

Since ${x_0}\times\Gamma\times{q'}$ is not completely contained in $\Theta$, it intersects it in a $(d, m, \deg D)$-bounded number of points. But \[{x_0}\times S\times{q'}=(\Gamma\times S\times\Gamma)\cap({x_0}\times\Gamma\times{q'})\subset\Theta\cap({x_0}\times\Gamma\times{q'})\] so $|S|$ is $(d, m, \deg D)$-bounded. Consequently, $|P_0|\gtrsim_{d,m,\deg D}|P|$.
\end{proof}

Finally, we prove Propositions~\ref{prop:rigiddist} and \ref{prop:rigstrgen}.

\begin{proof}[Proof of Propositions~\ref{prop:rigiddist} and \ref{prop:rigstrgen}]
We will firstly show that, in the case when the zeroes of $\mathcal{H}'_{\alpha\beta}$ are isolated, there is a suitably bounded number of them.

In the case where the parametrization $\gamma$ is rational, $\mathcal{H}'_{\alpha\beta}(\tau)$ is a rational function $\tau$ of degree $\lesssim\deg D\deg\gamma$. Thus, it vanishes identically or has at most $\lesssim\deg D\deg\gamma$ zeroes.

In the case where the curve $\Gamma$ is a real algebraic curve, there is an analogous bound. By Remark~\ref{remark:difftoalg}, the differential equation $\mathcal{H}'_{\alpha\beta}(\tau)=0$ is equivalent to a system of polynomial equations in the coordinates of $\gamma(\tau), \gamma(\alpha), \gamma(\beta)$ with $(d, m, \deg D)$-bounded degrees. If $\mathcal{H}'_{\alpha\beta}$ does not vanish identically then the points $\gamma(\tau)\in \Gamma$ such that $\mathcal{H}'_{\alpha\beta}(\tau)=0$ form a Zariski-closed proper subset of $\Gamma$ consisting of finitely many points. By B\'{e}zout's Theorem, this subset has $(d, m, \deg D)$-bounded cardinality with a bound independent of $\alpha, \beta$.

Let $\alpha, \beta\in I$ be such that $\mathcal{H}'_{\alpha\beta}$ does not vanish identically. Partition the interval $I$ into the smallest number $N_\Gamma$ of intervals \[I=\bigcup_{r=1}^{N_\Gamma}I_r\] such that $\mathcal{H}'_{\alpha\beta}(\tau)\ne 0$ for all $\tau$ in the interior of each $I_r$.

Then, for each $r$ and $\tau_1, \tau_2\in I_r$ distinct from $\alpha, \beta$ such that $\tau_1<\tau_2$, the $(2,2)$-framework \[\mathcal{K}(\{\gamma(\alpha),\gamma(\beta)\},\{\gamma(\tau_1),\gamma(\tau_2)\})\] is not $D$-infinitesimally flexible. Indeed, if it is then (\ref{diffH}) implies that \[\mathcal{H}_{\alpha\beta}(\tau_1)=\mathcal{H}_{\alpha\beta}(\tau_2).\] Therefore $\mathcal{H}'_{\alpha\beta}$ has a zero on the interval $(\tau_1, \tau_2)$, but this cannot happen by construction.

Using the bounds on the number of zeroes of $\mathcal{H}'_{\alpha\beta}$ above, completes the proof.

\end{proof}

\section{Reduction to simplicity for the distance-squared function}\label{sect:simplicity}

We will now restrict our attention to the square distance function on $\reals^d$ given by \[D(x,y)=\norm{x-y}^2\] for $x,y\in\reals^d$ and show how, given a general real algebraic curve $\Gamma$, we may reduce to the situation where $[D,\Gamma]$ is simple (so that Theorem~\ref{thm:general} applies).

\subsection{Rationally parametrized curves}\label{sect:simplerat}

Assume first that $\Gamma$ has a rational parametrization $\gamma:I\to\reals^d$; this case is elementary. The case of general real algebraic curves is dealt with in the next section using more sophisticated tools.

Without loss of generality, $\Gamma$ is not a straight line and does not lie in an affine hyperplane. Indeed, suppose this is not the case and $\Gamma$ lies in a $d'$-dimensional affine subspace of $\reals^d$ but does not lie in any $d''$-dimensional affine subspace for $d''<d'$. The cardinality of $\Delta(P)$ is invariant under rigid motions, so in proving Theorem~\ref{thm:algdist}, it is no loss of generality to assume that this affine subspace is equal to $\reals^{d'}\times\{0\}^{d-d'}$ for some $d'<d$. If $d'=1$ then $\Gamma$ is the open subset of a line and there is nothing to prove. If $d'>1$, the subsequent discussion then applies with $d'$ replacing $d$. 

Furthermore, we may partition $I$ into the union of $N$ disjoint open intervals $\{I_j\}_{j=1}^N$ and a finite set of exceptional points, as \[I=E\cup\bigcup_{j=1}^NI_j,\] with the property that for each $1\le j\le N$, $\gamma|_{I_j}:I_j\to\reals^d$ is injective with non-vanishing first derivative and it defines a curve, $\Gamma_j$, such that whenever $\alpha,\beta\in I_j$ are distinct points, any affine hyperplane which is orthogonal to $(\gamma(\alpha)-\gamma(\beta))$ intersects $\Gamma_j$ in at most one point. Moreover, the partitioning may be performed with $\deg\gamma$-bounded $N$ and $|E|$. Indeed, since $\Gamma$ does not lie in an affine hyperplane, no rational component of $\dot\gamma(t)$ is identically zero. Therefore the set $E$ of $t\in I$ such that any component of $\dot\gamma(t)$ vanishes is $\deg\gamma$-bounded. We may partition $\reals$ as \[\reals=E'\cup\bigcup_{j=1}^N I_j\] into the union of these exceptional points and a finite number of open intervals, $\{I_j\}_{j=1}^N$, whose number is $\deg\gamma$-bounded, such that on each $I_j$ none of the rational components of $\dot\gamma(t)$ vanish. Then $\gamma|_{I_j}$ is certainly injective; in fact, each of its components is strictly monotone. Furthermore, if $p_1,p_2\in \Gamma_j$ are distinct points and there are two points $q_1,q_2\in\Gamma_j$ lying on an affine hyperplane orthogonal to $(p_2-p_1)$, it follows that $(p_2-p_1)$ is orthogonal to $(q_2-q_1)$. If we express these vectors in Euclidean coordinates, \[(p_2-p_1)=(a_1,\ldots,a_d)\]\[(q_2-q_1)=(b_1,\ldots,b_d)\] then, by the strict monotonicity of each component of $\gamma|_{I_j}(t)$, each $a_j$ is non-zero and the products $a_1b_1,\ldots,a_db_d$ are either all non-negative or all non-positive. But \[0=(p_2-p_1)\cdot(q_2-q_1)=\sum_j a_jb_j,\] which forces $b_j=0$ for all $j$, i.e. $q_1=q_2$.

By replacing $P$ with $P\setminus E$ and $\Gamma$ with a curve $\Gamma_j$ which contains $\gtrsim_{\deg\gamma}|P|$ points of $P\setminus E$, in proving Theorem~\ref{thm:algdist}, we may thus assume without loss of generality (up to the loss of a constant factor depending only on the degree of $\gamma$) that $\gamma$ itself is injective, each component of $\dot\gamma(t)$ does not vanish and $\Gamma$ has the property that whenever $\alpha,\beta\in I$ are distinct points, any affine hyperplane which is orthogonal to $(\gamma(\alpha)-\gamma(\beta))$ intersects $\Gamma$ in at most one point. 

With this reduction, $[D,\Gamma]$ is simple: The injectivity condition in the definition is satisfied because if $t\in I$ satisfies \[\norm{\gamma(t)-\gamma(\alpha)}=d_1\]\[\norm{\gamma(t)-\gamma(\beta)}=d_2\] for some distinct $\alpha,\beta\in I$ and $d_1,d_2\ge 0$ then $\gamma(t)$ lies on the intersection of two hyperspheres centered at $\gamma(\alpha)$ and $\gamma(\beta)$. Therefore, it lies on a certain affine hyperplane which is orthogonal to the vector $(\gamma(\beta)-\gamma(\alpha))$. So $t$ is uniquely determined. 

The submersion condition is also satisfied. Indeed, the derivative of $\norm{\gamma(t)-\gamma(\alpha)}^2$ with respect to $t$ is 
\[
2\dot\gamma(t)\cdot(\gamma(t)-\gamma(\alpha))
\] and, expressing $\dot\gamma(t)$, $(\gamma(t)-\gamma(\alpha))$ in Euclidean coordinates \[\dot\gamma(t)=(a_1,\ldots,a_d)\] \[(\gamma(t)-\gamma(\alpha))=(b_1,\ldots,b_d),\] the scalar product \[\dot\gamma(t)\cdot(\gamma(t)-\gamma(\alpha))=\sum_ja_jb_j\] is non-zero since the strict monotonicity of each component of $\gamma(t)$ implies that none of the coordinates $a_j$, $b_j$ are zero and the products $a_1b_1,\ldots, a_db_d$ are either all positive or all negative. 

\subsection{Real algebraic curves}\label{sect:simplealg}

Let $\Gamma\subset\reals^d$ be a real algebraic curve of (geometric) degree $n$ and algebraic degree $m$ which does not lie in an affine hyperplane. By considering the irreducible component of $\Gamma$ containing the most points of $P$ and losing a constant factor depending on $n$, we may assume in proving Theorem~\ref{thm:algdist} that $\Gamma$ is irreducible. Let $\mathcal{I}_{X_1,\ldots,X_d}\subset\complex[X_1, \ldots, X_d]$ be a prime ideal generating $\Gamma$ which has the property that there is a generating set $\{f_1, \ldots, f_r\}$ for $I$ such that $\deg f_j\le m$ for $j=1, \ldots, r$.  

Let $S$ be the set of singularities which lie on $\Gamma$. By B\'{e}zout's Theorem, the cardinality of $S$ is $(d,m)$-bounded. Consider the equivalence relation $\sim$ on the non-singular points of $\Gamma\subset\reals^d$ where $p\sim q$ whenever there is a continuous path from $p$ to $q$ along $\Gamma$ which does not cross any points of $S$. By the Thom-Milnor Theorem, $\Gamma\subset\reals^d$ consists of a $(d,m)$-bounded number of connected components. By considering the number of points of intersection between $\Gamma$ and a $d$-bounded number of suitably chosen affine hyperplanes near each singularity, it then follows that the number of equivalence classes is $(d,n,m)$-bounded. Since $n\le m^d$, the number of classes is, in fact, $(d,m)$-bounded. Each class is a connected, singularity-free open subset of $\Gamma$ and therefore, by the Implicit Function Theorem, has a real-analytic parametrization $\gamma:I\to\reals^d$ for some open interval $I\subset\reals$ which covers the entire equivalence class except for possibly one exceptional point. By choosing the equivalence class with the most points of $P$ and losing a $(d,m)$-bounded factor, we may thus assume that $\Gamma$ itself has an analytic parametrization $\gamma:I\to\reals^d$. We may assume, as above, that $\Gamma$ does not lie in an affine hyperplane. Then, none of the components of $\dot\gamma$ vanish identically and, by Remark~\ref{remark:difftoalg}, we may subdivide $I$ appropriately into a $(d,m)$-bounded number of open intervals and exceptional points, similarly to the rational curves case, and thus reduce to the case where $[\norm{\cdot}^2,\Gamma]$ is simple.

\section{Which curves are $\mathcal{T}$-degenerate?}\label{sect:degen}

Let $D(X,Y)=\norm{X-Y}^2$. Suppose that $\Gamma\subset\reals^d$ has a real-analytic singularity-free parametrization $\gamma:I\to\reals^d$, $[D,\Gamma]$ is simple and $\Gamma$ is $\mathcal{T}$-degenerate. We may assume, without loss of generality, that $\gamma$ is a unit-speed parametrization.

For each $d\ge 0$ and $\tau\in I$, let $\alpha(\tau,d)\in I$ be the least $\alpha(\tau,d)\ge\tau$ such that \[\norm{\gamma(\alpha(\tau,d))-\gamma(\tau)}=d\] when such an $\alpha(\tau,d)$ exists. We extend the definition of $\alpha(\tau,d)$ to negative $d$: we define $\alpha(\tau,d)$ for $d<0$ to be the largest $\alpha(\tau,d)\le\tau$ such that \[\norm{\gamma(\alpha(\tau,d))-\gamma(\tau)}=|d|.\]

Fix $\tau_0\in I$. Without loss of generality, we may take $\tau_0=0$. There exists a sufficiently small $\delta>0$ such that $\alpha(\tau,d)$ is defined for all $|\tau|, |d| <\delta$. Observe that $\alpha(\tau,d)$ is real analytic for $\tau\in(-\delta,\delta)$ for each fixed $|d|<\delta$. 

For each fixed $|\tau|<\delta$, $d\mapsto\alpha(\tau,d)$ is continuous. Furthermore, the derivative $\partial_2\alpha(\tau,d)$ with respect to $d$, agrees with the continuous function \[\frac{2d}{\dot\gamma(\alpha(\tau,d))\cdot\big(\gamma(\alpha(\tau,d))-\gamma(\tau)\big)}\] whenever $d\ne 0$. Since \[\text{lim}_{d\to 0}\,\frac{2d}{\dot\gamma(\alpha(\tau,d))\cdot\big(\gamma(\alpha(\tau,d))-\gamma(\tau)\big)}=\text{lim}_{d\to 0}\,\frac{2d}{d\dot\gamma(\tau)\cdot\dot\gamma(\tau)}=2,\] it follows that $\partial_2\alpha(\tau,d)$ agrees with a continuous function for all $|d|<\delta$. Moreover, using the analytic equation 
\begin{equation}\label{eqn:normgamma}\norm{\gamma(\alpha(\tau,d))-\gamma(\tau)}^2-d^2=0\end{equation} and the Inverse Function Theorem, we may extend the function $d\mapsto\alpha(\tau, d)$ to a continuous function on a suitably small domain containing $0$ in $\complex$ with these properties remaining valid on this domain. 

Thus, $\alpha(\tau,d)$  is separately analytic and continuous on a small domain, so it is in fact jointly analytic in $\tau$ and $d$. The equation (\ref{eqn:normgamma}) then implies that the power series for $\alpha$ takes the form 
\[\alpha(\tau,d)=\tau+d+d^Nf(\tau,d)\]
for some positive integer $N>1$ and an analytic function $f$ such that $f(\tau,0)$ does not vanish identically.

We will show that, in the degenerate case, every derivative of $\gamma$ has constant norm by approximating its value using a finite difference method. As a first step, we observe that the assumption that every triangle may be moved along $\Gamma$ while preserving the edge lengths implies that, in fact, any vertex embedding of any complete graph into $\Gamma$ may be moved.

\begin{lemma}\label{lemma:kndegfromt}
The curve $\Gamma$ is $\mathcal{K}_N$-degenerate for every $N\ge 1$. 
\end{lemma}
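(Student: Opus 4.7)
The plan is to construct the required smooth motion of a $\mathcal{K}_N$-framework by prescribing the motion of one designated anchor vertex and using the uniqueness statement of Remark~\ref{rem:uniqueline} to force the motion of every other vertex; the $\mathcal{T}$-degeneracy hypothesis will then be invoked once, on each triangle avoiding the anchor, to show that all remaining edge-lengths are automatically preserved. In particular, no induction on $N$ will be needed.

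First I would fix preimages $\tau_1,\ldots,\tau_N \in I$ of the $N$ vertices and choose any non-constant smooth path $\alpha_1:(-\delta,\delta)\to I$ with $\alpha_1(0)=\tau_1$. Lemma~\ref{lemma:difftopert} combined with the uniqueness in Remark~\ref{rem:uniqueline} then yields, for each $j=2,\ldots,N$, a unique smooth $\alpha_j$ with $\alpha_j(0)=\tau_j$ and
\[
D(\gamma(\alpha_1(t)),\gamma(\alpha_j(t))) \equiv D(\gamma(\tau_1),\gamma(\tau_j))
\]
on a common small interval around $0$. This takes care of the $N-1$ edges of $\mathcal{K}_N$ incident to the anchor vertex, and since $\alpha_1$ is non-constant the candidate motion $(\alpha_1,\ldots,\alpha_N)$ is automatically non-trivial.

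The remaining task will be to show that for every pair $i,j \in \{2,\ldots,N\}$ the function $t \mapsto D(\gamma(\alpha_i(t)),\gamma(\alpha_j(t)))$ is also constant. Here I would invoke $\mathcal{T}$-degeneracy applied to the triangular framework on $\{\gamma(\tau_1),\gamma(\tau_i),\gamma(\tau_j)\}$: by hypothesis there exists some non-trivial smooth motion $(\beta_1(t),\beta_i(t),\beta_j(t))$ preserving all three edge lengths. Non-triviality together with Remark~\ref{rem:uniqueline} forces $\beta_1$ to be non-constant (otherwise $\beta_i,\beta_j$ would coincide with the trivial solution), so there is a time $t_*$ at which $\beta_1'(t_*)\ne 0$. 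After shrinking the interval and reparametrizing time, I may assume that $\beta_1$ traces the same path as the prescribed $\alpha_1$. The uniqueness clause of Remark~\ref{rem:uniqueline} then forces $\beta_i \equiv \alpha_i$ and $\beta_j \equiv \alpha_j$, so our constructed motion agrees with the triangular one and inherits preservation of the $(i,j)$-edge length. Repeating this for every such pair completes the verification.

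The main obstacle I anticipate is precisely the reparametrization of the $\mathcal{T}$-motion to match the prescribed anchor path $\alpha_1$, since $\mathcal{T}$-degeneracy supplies only the existence of some smooth motion and does not let us dictate its time parametrization. I would handle this by combining the non-triviality clause in the definition of smooth flexibility with the essentially $1$-parameter nature of such motions implicit in Remark~\ref{rem:uniqueline}, as sketched above. Once this is settled, every edge of $\mathcal{K}_N$ is covered either by the direct Lemma~\ref{lemma:difftopert} construction (edges through the anchor) or by the triangle argument (edges between non-anchor vertices), and $\mathcal{K}_N$-degeneracy follows.
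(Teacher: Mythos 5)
Your argument reaches the right conclusion with the same two ingredients as the paper, namely the uniqueness of the implicit branch from Remark~\ref{rem:uniqueline} and $\mathcal{T}$-degeneracy, but organizes them differently. The paper inducts on $N$: given a flexible $\mathcal{K}_N$-framework with motion $\phi_t$, it adjoins one extra vertex $p$, takes a $\mathcal{T}$-motion $\psi_t$ of the triangle $\{p,q,r\}$, matches $\psi_t$ to $\phi_t$ on $\{q,r\}$ using the uniqueness of the implicit branch, and concludes that each distance from $p$ to an existing vertex $s$ is preserved. Your version is non-inductive: one anchor vertex drives the whole configuration, the implicit branches supply $\alpha_2,\ldots,\alpha_N$, and each non-anchor edge $\{i,j\}$ is closed by applying $\mathcal{T}$-degeneracy to the triangle on $\{\tau_1,\tau_i,\tau_j\}$. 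Both proofs ultimately rest on the same matching step; your direct presentation is arguably cleaner in that it dispenses with the inductive bookkeeping, while the paper's version makes the ``one vertex at a time'' structure explicit.

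The place where your write-up is slightly too quick (and, to be fair, the paper is equally terse) is the reparametrization claim. A non-constant smooth $\beta_1$ with $\beta_1(0)=\tau_1$ need not have $\beta_1'(0)\ne 0$; for example $\beta_1(t)=\tau_1+t^2$ has image only a one-sided neighbourhood of $\tau_1$, and no continuous reparametrization of $(\beta_1,\beta_i,\beta_j)$ can match a two-sided anchor path $\alpha_1$. The clean way to close this is to drop the time variable and work in the configuration variable $\alpha$: by Remark~\ref{rem:uniqueline}, the $\mathcal{T}$-motion $(\beta_1,\beta_i,\beta_j)$ must factor through the implicit branches, so $D\bigl(\gamma(\beta_i(\alpha)),\gamma(\beta_j(\alpha))\bigr)=D(\gamma(\tau_i),\gamma(\tau_j))$ holds for all $\alpha$ in the image of $\beta_1$, a non-degenerate (possibly one-sided) subinterval containing $\tau_1$. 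Since $\gamma$ is real-analytic and $D$ is a polynomial, the analytic implicit function theorem makes $\beta_i,\beta_j$ real-analytic in $\alpha$; constancy on a half-interval therefore propagates to a full neighbourhood of $\tau_1$, and composing with $\alpha_1$ gives the required constancy of $t\mapsto D\bigl(\gamma(\alpha_i(t)),\gamma(\alpha_j(t))\bigr)$. With this small repair your proof is complete.
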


\begin{proof}
We induct on $N$. The cases $N=1,2,3$ are follow from our assumptions on $\Gamma$ so we assume that $N\ge 3$.

Let $\mathcal{F}$ be a smoothly flexible $\mathcal{K}_N$-framework on $\Gamma$ with smooth motion $\phi_t$ and let $p\in\Gamma$ be a point which is not a vertex of $\mathcal{F}$. For each vertex $q\in\Gamma$ of the framework, write $v(q)$ for the corresponding vertex of the underlying graph $\mathcal{K}_N$.

Choose any two distinct vertices $q,r$ of $\mathcal{F}$. The triangle with vertices at $p,q,r$ is smoothly flexible with smooth motion $\psi_t$ (for $t$ in a suitable interval), say, by the assumption that $\Gamma$ is $\mathcal{T}$-degenerate. We will identify the vertices corresponding to $q$ and $r$ in the triangular graph with $v(q)$ and $v(r)$. Write $v_p$ for the vertex in the triangular graph corresponding to $p$.

The distances between $p$ and $r$ and between $p$ and $q$ remain constant throughout the smooth motion $\psi_t$. The uniqueness in the motions (see Remark~\ref{rem:uniquedist}) implies that we may choose $\psi_t$ so that $\psi_t|_{\{v(q),v(r)\}}$ is always equal to $\phi_t|_{\{v(q),v(r)\}}$. Informally, this means that the motion of $\mathcal{F}$ and that of the triangle with vertices $p,q,r$ that we are considering match for $q$ and $r$. 

For each $t$, the distances from any vertex $s\in\Gamma$ of $\mathcal{F}$ to $q$ and $r$ remain fixed (as $\phi_t$ is a motion) and the distances from $p$ to $q$, $q$ to $r$ and $r$ to $p$ also remain fixed (as $\psi$ is a motion), so the distance from $s$ to $p$ also remains fixed. Therefore, each distance from $p$ to a vertex of $\mathcal{F}$ remains fixed as $t$ varies. Thus, $\phi_t$ extends to a smooth motion of the $\mathcal{K}_{N+1}$-framework obtained by adding $p$ to the vertices of $\mathcal{F}$.

\end{proof}

\begin{lemma}\label{lemma:constcurv}
For each $k\ge 1$, the norm $\norm{\gamma^{(k)}}$ of the $k$-th derivative of $\gamma$ is constant.
\end{lemma}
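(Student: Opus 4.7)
The plan is to apply Lemma~\ref{lemma:kndegfromt} to upgrade $\mathcal{T}$-degeneracy to $\mathcal{K}_{k+1}$-degeneracy and then build a finite-difference approximation to $\gamma^{(k)}(\tau_0)$ whose squared norm is a function of pairwise Euclidean distances alone. For $\tau_0 \in I$ and small $d > 0$, let $\tau_i := \alpha(\tau_{i-1}, d)$ for $i = 1, \ldots, k$. The $\mathcal{K}_{k+1}$-degeneracy of the framework on $\{\gamma(\tau_0), \ldots, \gamma(\tau_k)\}$ combined with the uniqueness in Remark~\ref{rem:uniqueline} forces each pairwise distance $D_{ij} := \|\gamma(\tau_i) - \gamma(\tau_j)\|$ to depend only on $|i-j|$ and $d$. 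Setting $\Delta^k \gamma := \sum_{i=0}^k (-1)^{k-i}\binom{k}{i}\gamma(\tau_i)$ and using polarization with $\sum_i(-1)^{k-i}\binom{k}{i} = 0$,
\[
\|\Delta^k \gamma\|^2 \;=\; -\frac{1}{2}\sum_{i, j = 0}^k (-1)^{i+j}\binom{k}{i}\binom{k}{j} D_{ij}^2,
\]
a function of $d$ alone. If I can establish $\|\Delta^k \gamma\|^2 / d^{2k} \to \|\gamma^{(k)}(\tau_0)\|^2$ as $d \to 0$, the $\tau_0$-independence of the left-hand side forces $\|\gamma^{(k)}\|^2$ to be constant.

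I would induct on $k$, with the base $k = 1$ being immediate from unit speed. For the inductive step, assume $\|\gamma^{(l)}\|^2$ is constant for $l < k$. Repeatedly differentiating these identities via $(\|\gamma^{(a)}\|^2)^{(s)} = \sum_i \binom{s}{i} \gamma^{(a+i)} \cdot \gamma^{(a + s - i)} = 0$ for $a < k$ and solving by a secondary strong induction on $\max(l, m)$ shows that every inner product $\gamma^{(l)} \cdot \gamma^{(m)}$ with $l + m \leq 2k - 1$ is constant in $\tau$. Consequently the Taylor coefficients $C_n(\tau) := \sum_{l + m = n}(l!\,m!)^{-1} \gamma^{(l)}(\tau) \cdot \gamma^{(m)}(\tau)$ of $\|\gamma(\tau + h) - \gamma(\tau)\|^2$ in $h$ are constant in $\tau$ for $n \leq 2k - 1$. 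Inverting the implicit equation $\sum_n C_n h^n = d^2$ then shows the $d^j$-coefficients of $\alpha(\tau, d) - \tau$ are constant in $\tau$ for $j \leq 2k - 2$, and telescoping along the iterates yields the refined expansion $\tau_i - \tau_0 = i\,\widetilde{G}(\tau_0, d) + O(d^{2k})$, where $\widetilde{G}(\tau_0, d) = d + O(d^3)$ has $\tau_0$-constant coefficients through order $d^{2k - 2}$.

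Plugging $\gamma(\tau_i) = \sum_{l \geq 0}(l!)^{-1}(\tau_i - \tau_0)^l \gamma^{(l)}(\tau_0)$ into $\Delta^k \gamma$ and using $(\tau_i - \tau_0)^l = (i \widetilde{G})^l + O(d^{l + 2k - 1})$ together with the classical moment identities $\sum_i (-1)^{k-i}\binom{k}{i}\, i^l = 0$ for $0 \leq l < k$ and $= k!$ for $l = k$, the contributions split as follows: for $1 \leq l < k$ the sum $\sum_i(-1)^{k-i}\binom{k}{i}(\tau_i - \tau_0)^l$ is $O(d^{l + 2k - 1})$, which is $o(d^k)$ after dividing by $d^k$; for $l = k$ the contribution is $\gamma^{(k)}(\tau_0)\widetilde{G}^k = \gamma^{(k)}(\tau_0)(d^k + O(d^{k + 2}))$; and for $l > k$ the contribution is $O(d^l) = o(d^k)$. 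Therefore $\Delta^k \gamma / d^k \to \gamma^{(k)}(\tau_0)$ and hence $\|\Delta^k \gamma\|^2 / d^{2k} \to \|\gamma^{(k)}(\tau_0)\|^2$, completing the induction.

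The main obstacle is the layered bookkeeping in the inductive step: first, establishing constancy of every $\gamma^{(l)} \cdot \gamma^{(m)}$ with $l + m \leq 2k - 1$ under the inductive hypothesis (the secondary strong induction on $\max(l, m)$); second, inverting the implicit series $\sum_n C_n h^n = d^2$ with enough precision to push the refined expansion of $\tau_i - \tau_0$ through order $d^{2k - 1}$. Without this refinement, the lower-order terms with $1 \leq l < k - 2$ in the finite difference $\Delta^k\gamma/d^k$ would only be controlled as $O(d^{l+2-k})$, producing contributions that fail to vanish as $d \to 0$; the refined expansion is what rescues the convergence.
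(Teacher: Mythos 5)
Your proof is correct, and it establishes the lemma by a genuinely different route from the paper's, though the high-level strategy (finite differences plus $\mathcal{K}_{k+1}$-degeneracy) is the same.

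The paper samples at $\alpha_{jd} := \alpha(\tau, jd)$ for $-m_1 \le j \le m_2$ and uses a divided-difference formula $\gamma^{(k)}(\tau) = \lim_{d\to 0} \sum_j C_j \frac{\gamma(\alpha_{jd}) - \gamma(\alpha_{(j+1)d})}{D_j(\tau,d)}$ whose denominators $D_j$ are products of the (grid-adapted) node spacings $\alpha_{nd}-\alpha_{n'd}$. It then observes that the scalar products $S_{ij}$ are $\tau$-independent and that $\lim D_j/d^k =: \delta_j$ is $\tau$-independent, and replaces $D_i D_j$ by $d^{2k}\delta_i\delta_j$ in the formula for $\|\gamma^{(k)}\|^2$. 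The justification that this replacement preserves the limit is left implicit: the error $1/D_j - 1/(d^k\delta_j)$ is only $O(d^{1-k})$ per term, so for $k \ge 3$ one needs a cancellation that the paper does not spell out. You sidestep this altogether by (i) iterating $\tau_i = \alpha(\tau_{i-1},d)$ rather than sampling at curve-distances $jd$, (ii) taking uniform finite-difference weights $(-1)^{k-i}\binom{k}{i}$ so the polarization identity $\|\Delta^k\gamma\|^2 = -\tfrac12\sum_{i,j}(-1)^{i+j}\binom{k}{i}\binom{k}{j}D_{ij}^2$ expresses everything in $\tau_0$-independent data, and (iii) carrying out an induction on $k$ that uses the constancy of $\|\gamma^{(1)}\|,\dots,\|\gamma^{(k-1)}\|$ to freeze the Taylor coefficients $C_n$ (and hence the coefficients $a_j$ of $\alpha(\tau,d)-\tau$) up to order $2k-2$, giving the refined expansion $\tau_i - \tau_0 = i\widetilde{G}(\tau_0,d) + O(d^{2k})$ and hence a fully justified limit $\Delta^k\gamma/d^k \to \gamma^{(k)}(\tau_0)$. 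The cost is the layered bookkeeping you flag (the secondary strong induction on $\max(l,m)$ and the series inversion), but in exchange the convergence is proven rather than asserted. Both proofs rely on Lemma~\ref{lemma:kndegfromt} and Remark~\ref{rem:uniqueline}; your claim that $D_{ij}$ depends only on $|i-j|$ and $d$ is justified by exactly the uniqueness-of-motion argument the paper uses for its $S_{ij}$, applied to the subchain $\tau_i,\dots,\tau_j$.
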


\begin{proof}
For sufficiently small $\tau$ and $d>0$, we may approximate $\gamma^{(k)}(\tau)$ by a finite difference approximation sampled at the points $\alpha_{jd}:=\alpha(\tau,jd)$ for $-m_1\le j\le m_2$ where $m_1, m_2$ are positive integers such that $m_1+m_2=k$. This leads to an expression of the form
\[
\gamma^{(k)}(\tau)=\text{lim}_{d\to 0}\,\,\sum_{-m_1\le j< m_2}C_j\frac{\gamma(\alpha_{jd})-\gamma(\alpha_{(j+1)d})}{D_j(\tau,d)}
\]
where each $C_j$ is a constant and each denominator $D_j(\tau,d)$ is the product of $k$ (possibly repeated) factors of the form $(\alpha_{nd}-\alpha_{n'd})$ for various integers $-m_1\le n,n'\le m_2$ such that $n\ne n'$.

Thus,
\[
\norm{\gamma^{(k)}(\tau)}^2=\text{lim}_{d\to 0}\,\,\sum_{-m_1\le i, j< m_2}C_iC_j\frac{\big(\gamma(\alpha_{id})-\gamma(\alpha_{(i+1)d})\big)\cdot\big(\gamma(\alpha_{jd})-\gamma(\alpha_{(j+1)d})\big)}{D_i(\tau,d)D_j(\tau,d)}.
\]

By Lemma~\ref{lemma:kndegfromt}, the $\mathcal{K}_{k+1}$-framework with vertices at $\gamma(\alpha(0,jd))$ is smoothly flexible. Consequently, each of the scalar products 
\[S_{ij}(d):=\big(\gamma(\alpha_{id})-\gamma(\alpha_{(i+1)d})\big)\cdot\big(\gamma(\alpha_{jd})-\gamma(\alpha_{(j+1)d})\big)\]
is independent of $\tau$ for small $\tau$.

Furthermore, \[(\alpha_{nd}-\alpha_{n'd})=d(n-n')+d^2G_{nn'}(\tau),\] for an analytic function $G_{nn'}$, which implies that \[\delta_j:=\text{lim}_{d\to 0}\,\,\frac{D_j(\tau,d)}{d^k}\] is finite, non-zero and independent of $\tau$ and $d$.

Therefore,
\[
\norm{\gamma^{(k)}(\tau)}^2=\text{lim}_{d\to 0}\,\,\sum_{-m_1\le i, j< m_2}C_iC_j\frac{S_{ij}(d)}{d^{2k}\delta_i\delta_j}
\]
is independent of $\tau$ for small $\tau$. By analytic continuation, $\norm{\gamma^{(k)}}^2$ is constant on its entire domain.
\end{proof}

D'Angelo and Tyson show in \cite{helices} that any smooth embedded curve in $\reals^d$ such that all its derivatives have constant norm is a generalized helix as in Definition~\ref{def:helix}. 

\begin{corollary}
Suppose that $\Gamma\subset\reals^d$ has a real-analytic singularity-free parametrization $\gamma:I\to\reals^d$ and $[\norm{\cdot}^2,\Gamma]$ is simple. Then, $\Gamma$ is $\mathcal{T}$-degenerate if and only if it is a generalized helix.
\end{corollary}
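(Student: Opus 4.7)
The plan is to prove the two directions separately, with the forward direction assembled from the lemmas already in hand and the reverse direction handled by a direct construction.

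For the forward direction, assume $\Gamma$ is $\mathcal{T}$-degenerate. After an affine reparametrization of $I$, we may take $\gamma$ to be unit-speed, which preserves both the simplicity of $[\norm{\cdot}^2,\Gamma]$ and the $\mathcal{T}$-degeneracy. Lemma~\ref{lemma:constcurv} then applies and yields that $\norm{\gamma^{(k)}}$ is constant on $I$ for every $k\ge 1$. Invoking the theorem of D'Angelo and Tyson \cite{helices}, which states that a real-analytic curve in $\reals^d$ all of whose derivatives have constant norm is (up to rigid motion and reparametrization) a generalized helix in the sense of Definition~\ref{def:helix}, we conclude that $\Gamma$ is a generalized helix.

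For the reverse direction, suppose $\Gamma$ is a generalized helix, parametrized (up to rigid motion) as $\gamma(t)=(\exp(At)v,tw,0)$ on an interval $I$. The computation in the proof of the theorem showing generalized helices support few-distance configurations already established the crucial identity
\[
\norm{\gamma(x)-\gamma(y)}=|x-y|\,\bigl(\norm{Av}^2+\norm{w}^2\bigr)^{1/2}
\]
for every $x,y\in I$. Consequently, the squared-distance between two points on $\Gamma$ depends only on the difference of their parameters. Given any triangular framework $\mathcal{T}_\Gamma(\{\gamma(\tau_1),\gamma(\tau_2),\gamma(\tau_3)\})$, define the motion
\[
\phi_s(v_i):=\gamma(\tau_i+s),\qquad i=1,2,3,
\]
for $s$ in a small open interval about $0$ (small enough that $\tau_i+s\in I$ for all $i$). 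Since $(\tau_i+s)-(\tau_j+s)=\tau_i-\tau_j$, each edge function $s\mapsto\norm{\phi_s(v_i)-\phi_s(v_j)}^2$ is constant, and $\phi_s$ moves the vertices non-trivially. Thus every triangular framework on $\Gamma$ is $D$-smoothly flexible, i.e.\ $\Gamma$ is $(D,\mathcal{T})$-degenerate.

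The only step that is not essentially immediate is the forward one, and even that merely packages Lemma~\ref{lemma:constcurv} with the external D'Angelo--Tyson characterization; the true analytic work has already been done in Section~\ref{sect:degen}. The main obstacle one might worry about is compatibility of the simplicity hypothesis with the unit-speed reparametrization in the forward direction, but since simplicity is a condition invariant under smooth reparametrizations of the parameter interval (all five conditions in Definition~\ref{def:simple} depend only on the image curve and the polynomial $D$), this causes no difficulty.
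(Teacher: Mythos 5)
Your forward direction is exactly the paper's own (implicit) argument: pass to a unit-speed parametrization, apply Lemma~\ref{lemma:constcurv} to get $\norm{\gamma^{(k)}}$ constant for all $k$, and invoke the D'Angelo--Tyson theorem. The reverse direction is not written out anywhere in the paper, and your translation motion $\phi_s(v_i)=\gamma(\tau_i+s)$ is a correct and natural way to supply it. Two caveats, neither of which breaks your argument. First, the exact identity you quote from the paper's earlier helix computation, $\norm{\gamma(x)-\gamma(y)}=|x-y|(\norm{Av}^2+\norm{w}^2)^{1/2}$, is false as stated (test it on the unit circle, where the chord length is $2|\sin\tfrac{x-y}{2}|$, not $|x-y|$); the correct statement, which follows from orthogonality of $\exp(Ay)$, is $\norm{\gamma(x)-\gamma(y)}^2=\norm{(\exp(A(x-y))-I)v}^2+(x-y)^2\norm{w}^2$, and the only thing your motion argument actually uses --- that the chord length depends only on $x-y$ --- is the correct weaker consequence. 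Second, your parenthetical claim that all five conditions of Definition~\ref{def:simple} are reparametrization-invariant is not quite right: condition 2, that $\ddot\gamma$ does not vanish identically, is not invariant (a line parametrized non-affinely satisfies it, while the arc-length parametrization of that same line does not). This is harmless here because the only case in which passing to unit-speed could destroy condition 2 is when $\Gamma$ is a straight line, and then both sides of the equivalence hold trivially, but the blanket assertion of invariance should be softened.
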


For the case when the curve $\Gamma$ is a real algebraic curve, we only want to consider generalized helices which are algebraic curves. The following lemma characterizes such helices, thus completing the proof of Theorem~\ref{thm:algdist}.

\begin{lemma}\label{lemma:algebraichelix}
Let $d>0$, $l,k\ge 0$ and $l+2k=d$. Let $I\subset \reals$ be an open interval. Suppose that $\gamma_T:I\to\reals^{2k}$ is given by 
\[
\gamma_T(t)=(\alpha_1\cos \lambda_1t, \alpha_1\sin \lambda_1 t, \ldots, \alpha_k\cos \lambda_kt, \alpha_k\sin \lambda_k t)
\]
for some $\alpha_1, \ldots, \alpha_k, \lambda_1, \ldots, \lambda_k\in \reals\setminus\{0\}$ and $\gamma_L:I\to\reals^l$ is given by 
\[
\gamma_L(t)=tw
\]
for some $w\in\reals^l$.

Then $\gamma:I\to\reals^d$ given by 
\[
\gamma(t)=(\gamma_T(t),\gamma_L(t))\in\reals^{2k}\times\reals^l
\]
parametrizes an open subset of a real algebraic curve if and only if either $l=0$ and for each $1\le i, j\le k$ the ratio $\frac{\lambda_i}{\lambda_j}$ is rational or, alternatively, $k=0$.
\end{lemma}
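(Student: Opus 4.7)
The plan is to prove both directions via projections onto carefully chosen pairs of coordinates.

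\emph{Forward direction.} If $k=0$ then $\gamma(t)=tw$ parametrizes a line, which is algebraic. If $l=0$ and all ratios $\lambda_i/\lambda_j$ are rational, choose $\mu\in\reals$ and integers $n_1,\ldots,n_k$ with $\lambda_i=n_i\mu$ for every $i$. Using the Chebyshev identities $\cos(n\theta)=T_n(\cos\theta)$ and $\sin(n\theta)=U_{n-1}(\cos\theta)\sin\theta$, each coordinate of $\gamma(t)$ is a polynomial in $X:=\cos(\mu t)$ and $Y:=\sin(\mu t)$. Thus $\gamma(I)$ is the image of an arc of the circle $X^2+Y^2=1$ under a polynomial map $\Phi:\complex^2\to\complex^{2k}$; since the circle is an irreducible complex curve and $\Phi$ is non-constant, the Zariski closure of $\Phi(\{X^2+Y^2=1\})$ is a one-dimensional complex variety whose real part is a real algebraic curve containing $\gamma(I)$ as an open subset.

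\emph{Converse direction.} Suppose $\gamma$ parametrizes an open subset of a real algebraic curve and $k\ge 1$; we will show $l=0$ (assuming $w\ne 0$, which is the only interesting case) and then that all ratios are rational. The underlying principle is that if $\pi:\reals^d\to\reals^2$ projects onto any two coordinates which are non-constant on $\gamma$, then the complex Zariski closure of $\pi(\gamma(I))$ has complex dimension exactly $1$ and is therefore cut out by a nonzero polynomial $P\in\complex[X,Z]$; by analytic continuation, $P(\pi(\gamma(t)))\equiv 0$ for all $t\in\complex$. To rule out $l\ge 1$, apply this to $(X,Z)=(\alpha_1\cos\lambda_1 t,\,w_j t)$ for a nonzero component $w_j$ of $w$. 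Writing $P=\sum_{i=0}^N p_i(X)Z^i$ with $p_N\not\equiv 0$, let $t\to+\infty$ along $\reals$: the leading term $p_N(\alpha_1\cos\lambda_1 t)(w_j t)^N$ has modulus bounded below by a constant multiple of $t^N$ on the positive-density set of $t$ where $\alpha_1\cos\lambda_1 t$ avoids the finitely many zeros of $p_N$ in $[-|\alpha_1|,|\alpha_1|]$, contradicting $P\equiv 0$. (The degenerate case $N=0$ forces $p_0$ to vanish on $[-|\alpha_1|,|\alpha_1|]$, so $P\equiv 0$, again a contradiction.)

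For the rationality claim, assume now $l=0$ and $k\ge 2$, and suppose for contradiction that $\lambda_i/\lambda_j$ is irrational for some $i\ne j$. Apply the same projection argument to $(X_i,X_j)=(\alpha_i\cos\lambda_i t,\,\alpha_j\cos\lambda_j t)$: we obtain a one-dimensional complex algebraic curve in $\complex^2$ whose real locus contains the image of $t\in\reals$. Kronecker's theorem on density of torus orbits gives that this image is dense in the rectangle $[-|\alpha_i|,|\alpha_i|]\times[-|\alpha_j|,|\alpha_j|]$, but the real points of a one-dimensional complex curve in $\complex^2$ form a semi-algebraic set of real dimension at most $1$, hence with empty interior; contradiction. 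The main obstacle in this strategy is the converse direction, and specifically the analytic-continuation step that upgrades a polynomial identity on the interval $I$ to one valid for all $t\in\complex$; this is precisely what allows the global phenomena (unbounded growth of $Z$ when $l\ge 1$, dense torus orbits when some ratio is irrational) to be exploited despite starting from only a local parametrization.
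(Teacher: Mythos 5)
Your proposal is correct, but it routes around the paper's auxiliary Lemma~\ref{lemma:sincossilly} entirely, whereas the paper makes that lemma carry all the weight in both directions. In the forward direction, the paper constructs the defining polynomials explicitly — the $k$ circle equations $X_j^2+Y_j^2-\alpha_j^2$ plus $k-1$ coupling polynomials $Q_{\rho_j}(\alpha_1^{-1}Y_1,\alpha_j^{-1}Y_j)$ supplied by Lemma~\ref{lemma:sincossilly} — while you instead write $\lambda_i=n_i\mu$ and observe via Chebyshev identities that $\gamma_T$ factors through a polynomial map from the complex circle $\{X^2+Y^2=1\}$; this is slicker and gets the one-dimensionality for free from irreducibility of the image. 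In the converse direction, the paper projects onto the sine coordinates and (a) for $l\ge 1$ notes that $(\alpha_1\sin\lambda_1 t,\,w_r t)$ hits the line $\{X=0\}$ at infinitely many distinct heights, forcing the (at most one-dimensional, irreducible) Zariski closure of the projected curve to be that line, which is absurd; and (b) for $l=0$ invokes the ``only if'' half of Lemma~\ref{lemma:sincossilly}, whose proof is an elementary but non-trivial Vandermonde-matrix argument. You instead (a) project onto a cosine coordinate and a linear coordinate and argue by a leading-term growth estimate as $t\to\infty$, which works but is a less tidy version of the same idea (the ``infinitely many points on $X=\alpha_1$'' observation would shorten it to the paper's form); and (b) for $l=0$ you replace Lemma~\ref{lemma:sincossilly} by Kronecker's theorem on density of irrational torus orbits together with the fact that the real trace of a one-dimensional complex curve has empty interior in $\reals^2$. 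The Kronecker route is arguably more conceptually transparent but imports a non-trivial external theorem, whereas the paper keeps the argument self-contained and uniformly elementary; the two are genuinely different proofs of the rationality half. One small thing to tighten: in your growth estimate you should require $\alpha_1\cos\lambda_1 t$ to stay a fixed distance $\eps>0$ away from the zeros of $p_N$ (not just to avoid them pointwise) in order to get the uniform lower bound $c\,t^N$; and in the forward direction you should briefly note why the real locus of your constructed complex curve satisfies condition (2) of the paper's Definition of ``real algebraic curve'' (smooth one-dimensional away from singularities), which the paper also leaves implicit.
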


This lemma is a consequence of the following elementary observation; we prove it here for completeness.

\begin{lemma}\label{lemma:sincossilly}
Let $\rho\in\reals$. There exists a non-zero polynomial $Q_\rho\in\reals[X,Y]$ such that \[Q_\rho(\sin t,\sin \rho t)\equiv 0\] if and only if $\rho\in \mathbb{Q}$.
\end{lemma}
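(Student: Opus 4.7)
The plan is to pass to the exponential variables $u = e^{it}$ and $v = e^{i\rho t}$, so that $\sin t = (u - u^{-1})/(2i)$ and $\sin \rho t = (v - v^{-1})/(2i)$, and translate the lemma into a statement about Laurent polynomials in $u$ and $v$.

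For the ``if'' direction, write $\rho = p/q$ in lowest terms with $q > 0$ and introduce the single variable $w = e^{it/q}$, so that $u = w^q$ and $v = w^p$. Both $\sin t$ and $\sin \rho t$ are then Laurent polynomials in $w$ alone; more precisely, $(X, Y) = (\sin t, \sin \rho t)$ satisfies the two polynomial equations $w^{2q} - 2iXw^q - 1 = 0$ and $w^{2p} - 2iYw^p - 1 = 0$ over $\complex[X, Y, w]$. Eliminating $w$ via the resultant produces a nonzero polynomial $Q_\complex(X, Y) \in \complex[X, Y]$ which vanishes on the parametrized curve $t \mapsto (\sin t, \sin \rho t)$. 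Taking the real or imaginary part of $Q_\complex$ (whichever is nonzero) yields the required $Q_\rho \in \reals[X, Y]$. Equivalently, one may invoke the transcendence-degree-one fact that any two elements of $\complex(w)$ are algebraically dependent over $\complex$.

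For the ``only if'' direction, suppose $\rho$ is irrational and, for contradiction, that a nonzero $Q \in \reals[X, Y]$ satisfies $Q(\sin t, \sin \rho t) \equiv 0$. Substituting $X = (u - u^{-1})/(2i)$ and $Y = (v - v^{-1})/(2i)$ into $Q$ and multiplying by $u^M v^N$ for sufficiently large $M, N$ yields a polynomial $R(u, v) = \sum_{(j,k) \in S} c_{jk} u^j v^k \in \complex[u,v]$ with all $c_{jk} \ne 0$. Evaluating at $u = e^{it}$ and $v = e^{i\rho t}$ gives
\[
\sum_{(j,k) \in S} c_{jk}\, e^{i(j + k\rho)\, t} \equiv 0 \qquad \text{for all } t \in \reals.
\]
Since $\rho \notin \mathbb{Q}$, the exponents $j + k\rho$ are pairwise distinct as $(j, k)$ ranges over $\integers^2$, and the classical linear independence of distinct real exponentials (e.g.\ via a Vandermonde-type argument) forces every $c_{jk}$ to vanish, a contradiction.

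The only delicate point is to justify that the polynomial $R(u,v)$ is genuinely nonzero after substitution and clearing denominators. This follows from the observation that if $R \equiv 0$ as a polynomial in two \emph{independent} variables, then evaluating at $u = e^{is}, v = e^{it}$ for independent real $s, t$ gives $Q(\sin s, \sin t) = 0$ for all $s, t$, forcing $Q$ to vanish on $[-1,1]^2$ and hence identically, contrary to hypothesis.
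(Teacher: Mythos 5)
Your proof is correct and takes essentially the same route as the paper: both directions pass to the exponential variables $e^{it}, e^{i\rho t}$, use a resultant elimination for the ``if'' direction, and conclude the ``only if'' direction from a vanishing finite sum $\sum c_{jk}e^{i(j+k\rho)t}\equiv 0$ together with linear independence of distinct exponentials. The only differences are in execution: you pass from the sine identity to the exponential identity by direct substitution $\sin t=(u-u^{-1})/(2i)$ and clearing denominators (where the paper routes through a transcendence-degree/algebraicity argument), and you invoke linear independence of exponentials as a standard fact (where the paper spells it out via an explicit Vandermonde matrix argument).
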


\begin{proof}
For each pair of integers $m$ and $n$, the functions $\sin m\tau$ and $\sin n\tau$ are algebraic over the field $\reals(\sin\tau)$. By considering the resultant eliminating $\sin\tau$ of the minimal polynomials, for example, it follows that there exists a non-zero polynomial $P_{mn}\in\reals[X,Y]$ such that \[P_{mn}(\sin m\tau, \sin n\tau)\equiv 0.\]

If $\rho=\frac{n}{m}\in\mathbb{Q}$, then defining $Q_\rho:=P_{mn}$ gives \[Q_\rho(\sin t,\sin \rho t)=P_{mn}\left({\sin m\frac{t}{m}, \sin n\frac{t}{m}}\right)=0.\]

Conversely, if there exists a non-zero polynomial $Q\in\reals[X,Y]$ such that \[Q(\sin \tau,\sin \rho\tau)\equiv 0\] for some $\rho\in\reals$, then $\sin\rho\tau$ is algebraic over $\reals(\sin\tau)$ so also over $\complex(e^{i\tau})$. Therefore $e^{i\tau}$ is algebraic over $\complex(\sin\rho\tau)$ so also over $\complex(e^{i\rho\tau})$. Thus there exists a non-zero polynomial $H\in\complex[X,Y]$ given by \[H(X,Y)=\sum_{0\le m,n\le M}c_{mn}X^mY^n\]such that \[H(e^{i\tau}, e^{i\rho\tau})\equiv 0.\] Hence 
\begin{equation}\label{vandcoeff}\sum_{0\le m,n\le M}c_{mn}e^{i\tau(m+\rho n)}\equiv 0
\end{equation} for some integer $M$ and coefficients $(c_{mn})_{0\le m,n\le M}$.

 Let $\sigma:\{1, \ldots, (M+1)^2\}\to\{0, 1, \ldots, M\}^2$ be any bijection. Let ${\bf c}$ be the $(M+1)^2\times 1$ non-zero complex column vector whose $\beta$-th entry is \[{\bf c}_\beta=c_{\sigma_1(\beta)\sigma_2(\beta)}.\] Fix a large $K>0$ and let ${\bf A}$ be the $(M+1)^2\times(M+1)^2$ complex matrix whose $(\alpha, \beta)$-th entry is \[{\bf A}_{\alpha\beta}= e^{i(\sigma_1(\beta)+\rho\sigma_2(\beta))\frac{(\alpha -1)}{K}}.\] Then, by considering $\tau=0, \frac{1}{K}, \ldots, \frac{M^2}{K}$ in (\ref{vandcoeff}), it follows that \[{\bf Ac}={\bf 0}.\] Hence ${\bf A}$ is a Vandermonde matrix with a non-trivial kernel so two of the entries in the second row must be equal. Thus there exist integers $0\le m,n,m',n'\le M$ such that $(m,n)\ne(m',n')$ and \[e^{i\frac{(m+\rho n)}{K}}=e^{i\frac{(m'+\rho n')}{K}}.\] Thus 
\[
\frac{(m-m')+\rho (n-n')}{K}\equiv 0 \mod 2\pi
\]
and, by choosing a sufficiently large $K$, we deduce that \[\rho=\frac{m'-m}{n-n'}\in\mathbb{Q}.\]
\end{proof}

\begin{proof}[Proof of Lemma~\ref{lemma:algebraichelix}]
If $k=0$ then $\gamma$ parametrizes a line and this is certainly an algebraic curve. If $k>0$, $l=0$ and each $\frac{\lambda_j}{\lambda_1}=\rho_j\in\mathbb{Q}$, then $\gamma$ parametrizes an open subset of the real algebraic curve given by the $2k-1$ polynomials 
\[
X_j^2+Y_j^2-\alpha_j^2\,\,\,\,\text{ for } j= 1, \ldots,  k
\]
\[
Q_{\rho_j}(\alpha_1^{-1}Y_1,\alpha_j^{-1}Y_j)\,\,\,\,\text{ for } j=2, \ldots,  k
\]
in $\reals[X_1, Y_1, X_2, Y_2, \ldots, X_k, Y_k]$, where $Q_\rho$ is the polynomial from Lemma~\ref{lemma:sincossilly}. 

Conversely, suppose that $\gamma$ parametrizes a non-empty open subset of a real algebraic curve, $\Gamma$. For each distinct $1\le i,j\le d$, write \[\pi_{i,j}:\reals^d\to\reals^2\] for the projection onto the $i$-th and $j$-th coordinates. Then the Zariski-closure of $\pi_{i,j}(\Gamma)$ is at most one-dimensional. 

Suppose for contradiction that $k>0$ and $l>0$. Then \[\pi_{2, (2k+r)}(\gamma(t))=(\alpha_1\sin\lambda_1 t, tw_r),\] where $1\le r\le l$ is chosen so that $w_r\ne 0$. Thus, $\pi_{2,(2k+r)}(\Gamma)$ intersects the line $\{0\}\times\reals\subset\reals^2$ at infinitely many points of the form $(0,w_r\frac{2\pi n}{\lambda_1})$ for $n\in\mathbb{Z}$. This contradicts the fact that the Zariski-closure of $\pi_{2,(2k+r)}(\Gamma)$ is at most one-dimensional.

If $l=0$ then for each $2\le j\le k$, \[\pi_{2,2j}(\gamma(t))=(\alpha_1\sin(\lambda_1 t), \alpha_j\sin(\lambda_jt)).\] Since this is a one-dimensional algebraic curve in $\reals^2$, it follows by Lemma~\ref{lemma:sincossilly} that $\frac{\lambda_j}{\lambda_1}\in\mathbb{Q}$.
\end{proof}

\section{Pinned triangle areas}\label{sect:areas}

To illustrate how the same method can be used for other quantites of interest, we now prove Theorem~\ref{thm:algareas}.

We consider the case when $d=2$ and \[D(x,y)=(x\times y)^2 := (x_1y_2-x_2y_1)^2\] for $x,y\in\reals^2$. Then $\frac{1}{2} |x\times y|=\frac{1}{2}D(x,y)^{1/2}$ is the area of the triangle with vertices at $x$, $y$ and the origin. Note that $D(x,y)=0$ if and only if $x$ is parallel to $y$.

\subsection{Reduction to simplicity}

Let $\Gamma$ be a rational curve which does not pass through the origin with rational parametrization $\gamma:I\to \reals^2$ for an open interval $I$. Write $\gamma(t)=(x(t),y(t))$. We assume that $\Gamma$ is not a straight line or a hyperbola centered at the origin or else there is nothing to prove.

Similarly to what was done in Section~\ref{sect:simplicity}, by losing a constant factor depending only on $\deg \gamma$, we may assume that the rational functions $x(t)$, $\dot x(t)$, $\frac{d}{dt}\big(\frac{y(t)}{x(t)}\big)$, $\frac{d}{dt}\big(\frac{\dot y(t)}{\dot x(t)}+\frac{y(t)}{x(t)}\big)$ do not vanish on $I$. Indeed, none of them vanish identically since $\Gamma$ is not a straight line or a hyperbola centered at the origin so the number of $t\in I$ where any of them vanish is $\deg\gamma$-bounded. In particular, observe that these assumptions imply that $\gamma$ is injective and, given any non-zero vector $v\in\reals^2$, there is at most one value $\alpha\in I$ such that $\gamma(\alpha)$ is parallel to $v$.

With these assumptions, we check that $[D,\Gamma]$ is simple: Suppose $t\in I$ satisfies \[(\gamma(t)\times\gamma(\alpha))^2=d_1\]\[(\gamma(t)\times\gamma(\beta))^2=d_2\] for some distinct $\alpha,\beta\in I$ and $d_1,d_2\ge 0$. If either of $d_1$ or $d_2$ is $0$ then $t$ is determined uniquely (as $\alpha$ or $\beta$) since $\gamma(t)$ is parallel to $\gamma(\alpha)$ if and only if $t=\alpha$ and similarly for $\beta$. If $d_1, d_2>0$ then $\gamma(t)$ is parallel to a vector of the form $\pm |d_1|^{-1/2}\gamma(\alpha)\pm |d_2|^{-1/2}\gamma(\beta)$. Since $\gamma(\alpha)$ is not parallel to $\gamma(\beta)$, any vector of the form $\pm|d_1|^{-1/2}\gamma(\alpha)\pm|d_2|^{-1/2}\gamma(\beta)$ is non-zero, so $t$ is uniquely determined. Thus the injectivity condition in the definition is satisfied. 

The submersion condition is also satisfied. Indeed, for $\alpha\ne\beta$,
\[\nabla_{(\alpha,\beta)}(\gamma(\alpha)\times\gamma(\beta))^2=(\gamma(\alpha)\times\gamma(\beta))\big(\dot\gamma(\alpha)\times\gamma(\beta), \gamma(\alpha)\times\dot\gamma(\beta)\big).\]
Suppose for contradiction that this derivative vanishes. Since $\gamma(\alpha)$ is not parallel to $\gamma(\beta)$, it follows that $\gamma(\alpha)$ is parallel to $\dot\gamma(\beta)$ and $\gamma(\beta)$ is parallel to $\dot\gamma(\alpha)$. Setting $z(t)=\frac{\dot y(t)}{\dot x(t)}+\frac{y(t)}{x(t)}$, it follows that $z(\alpha)=z(\beta)$ and so the rational function $\dot z(t)$ vanishes at some $t\in I$; this contradicts our assumptions.

Finally, $D$ is a distance polynomial for $\gamma$ since $\gamma(\alpha)$ is parallel to $\gamma(\beta)$ if and only if $\alpha=\beta$.

Therefore, we have reduced to the situation where $[D,\Gamma]$ is simple in the special case when the curve is rationally parametrized. Similarly to what was done in Section~\ref{sect:simplicity}, we also can reduce to simple $[D,\Gamma]$ in the case of a general real algebraic curve $\Gamma$ in $\reals^2$; we omit the details.

\subsection{Which curves are $(D,\mathcal{T})$-degenerate?}

By Theorem~\ref{thm:general}, it now follows that $\Gamma$ is $(D,\mathcal{T})$-degenerate and it remains to classify such curves to complete the proof.

We assume in this section that $\Gamma\subset\reals^2$ has a real-analytic singularity-free parametrization $\gamma:I\to\reals^2$, $[D,\Gamma]$ is simple and $\Gamma$ is $(D,\mathcal{T})$-degenerate. Write $\gamma(t)=(x(t),y(t))$.

Choose $\alpha, \beta\in I$ be such that $\gamma(\alpha)$ is not parallel to $\gamma(\beta)$. The conclusion of Theorem~\ref{thm:algareas} is invariant under the action of $GL_2(\reals)$ so we may assume that $\gamma(\alpha)=(1,0)$ and $\gamma(\beta)=(0,1)$.

Then, for $\mathcal{H}_{\alpha\beta}^D$ defined as in Section~\ref{sect:rig},  \[\mathcal{H}^D_{\alpha\beta}=\frac{\big(\dot\gamma(\beta)\cdot(-2yx,2x^2)\big)\big(\dot\gamma\cdot (0,2y)\big)}{\big(\dot\gamma(\alpha)\cdot (2y^2,-2xy)\big)\big(\dot\gamma\cdot (2x,0)\big)}=\frac{\big(\dot\gamma(\beta)\cdot(-2y,2x)\big)\big(\dot\gamma\cdot (0,2)\big)}{\big(\dot\gamma(\alpha)\cdot (2y,-2x)\big)\big(\dot\gamma\cdot (2,0)\big)}\] is constant and equal to  \[-\frac{\dot\gamma(\beta)\cdot(0,2)}{\dot\gamma(\alpha)\cdot (2,0)}.\]

Therefore,
\[-y\dot y\dot x(\alpha)\dot x(\beta)+x\dot y\dot x(\alpha)\dot y(\beta) + y\dot x\dot x(\alpha)\dot y(\beta) - x\dot x\dot y(\alpha)\dot y(\beta)\equiv 0\]
and hence
\[-y^2\dot x(\alpha)\dot x(\beta)+2xy\dot x(\alpha)\dot y(\beta) - x^2\dot y(\alpha)\dot y(\beta)=C\]
for some real constant $C$.

Thus $\gamma$ parametrizes an open subset of an ellipse or hyperbola centered at the origin and the proof of Theorem~\ref{thm:algareas} is complete.
\qed

\section{Further remarks}\label{sect:further}

Our results may be interpreted as a statement about the expansion of $D|_{\Gamma\times\Gamma}$ for small (in our case, finite) subsets $P\subset\Gamma$. The conclusions are similar in spirit to results such as \cite{hartli} in finite fields.

In this direction, there is the rather general result of Elekes and Szab\'{o} \cite{elekesszabo}: they consider the question of intersections \[(A\times B\times C)\bigcap V\] where $A$, $B$ and $C$ are finite subsets of varieties of the same dimension, $|A|=|B|=|C|$ and $V$ is a suitable variety. In the context of the problem we are considering, they show in particular that there exists a universal constant $\eta>0$ such that if $\Gamma\subset\reals^d$ is a real algebraic curve, $D:\reals^d\times\reals^d\to\reals$ is a polynomial, $V_D\subset\Gamma\times\Gamma\times\reals$ is the variety \[V_D=\{(x,y,z)\in\Gamma\times\Gamma\times\reals\,\,|\,\,z=D(x,y)\}\] and $P\subset\Gamma$, $Q\subset \Delta(P)$ are finite subsets with cardinality $\sim N$ which satisfy
\[|(P\times P\times Q)\bigcap V_D|\gtrsim N^{2-\eta},\]
then the variety $V_D$ is special in a certain sense; essentially, there is an algebraic group acting in the background and the variety is the image of the graph of its multiplication function.

Our results above can be phrased in a similar form, by an elementary averaging argument: for any $\epsilon>0$, if $[D,\Gamma]$ is simple and
\[|(P\times P\times Q)\bigcap V_D|\gtrsim N^{2-\frac{1}{4}+\epsilon},\] then $\Gamma$ is $(D,\mathcal{T})$-degenerate. 

Nonetheless, the $\eta$ obtained by following the proof in \cite{elekesszabo} directly is less than $(\frac{1}{4}-\epsilon)$. With the notation of \cite{elekesszabo}, for small $\epsilon>0$, \[\eta<\frac{\eta'}{2}\le \frac{3-2(\alpha+\beta)}{2}=\alpha-\frac{1}{2}=\frac{1}{2(2D-1)}-\epsilon\le\frac{1}{2(2(4k)-1)}-\epsilon\le\frac{1}{14}-\epsilon.\]

A few weeks after an initial preprint of this paper was released, Pach and de Zeeuw \cite{pachdezeeuw} improved the exponent in Theorem~\ref{thm:algdist} from $1+\frac{1}{4}$ to $1+\frac{1}{3}$ in the special case where the ambient dimension is $2$ (and the polynomial in question is the square distance function). Their method is mostly algebraic and simpler than our argument for that particular case; when the curve $\Gamma$ is planar, there is no need to appeal to more advanced tools from algebraic geometry such as the Thom-Milnor Theorem or the theory of Gr\"{o}bner bases. 

An obstruction to improving the exponent in our argument is Proposition~\ref{prop:xixi}. In \cite{pachdezeeuw}, a different set of curves is considered instead of the Elekes curves considered here; the analogue using our notation would be to consider the curves given implicitly by \[D(\gamma(t), p) - D(\gamma(s), q) = 0\] for $(t,s)\in\reals^2$. This algebraic problem is more complicated and less directly amenable to analytical tools, but does indicate a natural approach to consider when trying to improve the exponent in Theorem~\ref{thm:algdist} (and its variations) when the ambient dimension is not necessarily $2$.

This set of curves was previously also considered by Sharir, Sheffer and Solymosi in \cite{distlines} who improved the bound of Elekes from \cite{elekes} for the quantitative version of Purdy's Conjecture from $1+\frac{1}{4}$ to $1+\frac{1}{3}$. More precisely, they show that if $ P_1$ and $P_2$ are two sets of $N$ points in the plane so that $P_1$ is contained in a line $L_1$, $P_2$ is contained in a line $L_2$, and $L_1$ and $L_2$ are neither parallel nor orthogonal, then the number of distinct distances determined by the pairs $P_1\times P_2$ is $\gtrsim N^{1 + \frac{1}{3}}$.

In both \cite{distlines} and \cite{pachdezeeuw}, the problem considered is, in fact, a bipartite problem: there are two curves $\Gamma_1, \Gamma_2$ and two finite subsets $P_1, P_2$ and the aim is to obtain a lower bound on the cardinality of the set \[\{D(p,q)\,\,|\,\,p\in P_1,  q\in P_2\}.\] While the argument in Section~\ref{sect:gen} and some of the rigidity results still apply almost verbatim (with appropriate modifications) in this bipartite setting, some of the central rigidity ideas, for example $\mathcal{T}$-degeneracy, do not seem to carry over easily. In order to keep the link to rigidity and the main result Theorem~\ref{thm:general} as clear as possible, we have chosen not to attempt to discuss the bipartite version of the problem here.

\vspace{5mm}
\paragraph{\bf{Acknowledgements}}

The author would like to thank M. Christ for several useful conversations and also S. Brodsky and R. Vianna for a helpful discussion on algebraic and geometric aspects of curves.

The author is especially grateful to the two anonymous referees who gave a long list of incredibly constructive comments; this resulted in a significant improvement to the exposition.

\bibliographystyle{amsplain}{}
\bibliography{ddpcurvesbib}

\providecommand{\bysame}{\leavevmode\hbox to3em{\hrulefill}\thinspace}
\providecommand{\MR}{\relax\ifhmode\unskip\space\fi MR }
\providecommand{\MRhref}[2]{%
  \href{http://www.ams.org/mathscinet-getitem?mr=#1}{#2}
}
\providecommand{\href}[2]{#2}
\begin{thebibliography}{10}

\bibitem{rigid1}
L.~Asimow and B.~Roth, \emph{The rigidity of graphs}, Trans. Am. Math. Soc.
  \textbf{245} (1978), 279--289.

\bibitem{ideals}
D.~A. Cox, J.~B. Little, and D.~O'Shea, \emph{{I}deals, {V}arieties and
  {A}lgorithms}, Springer, 2007.

\bibitem{helices}
J.~P. D'Angelo and J.~T. Tyson, \emph{Helical {CR} structures and
  sub-{R}iemannian geodesics}, Complex Var. Elliptic Equ. \textbf{54} (2009),
  205--221.

\bibitem{dubebound}
T.~W. Dub\'{e}, \emph{The structure of polynomial ideals and {G}r{\"{o}}bner
  bases}, SIAM Journal of Computing \textbf{19} (1990), 750.

\bibitem{elekes}
G.~Elekes, \emph{A note on the number of distinct distances}, Period. Math.
  Hung. \textbf{38} (1999), 173--177.

\bibitem{elekesronyai}
G.~Elekes and L.~R\'{o}nyai, \emph{A combinatorial problem on polynomials and
  rational functions}, Jour. of Comb. Theory (2000).

\bibitem{elekesszabo}
G.~Elekes and E.~Szab\'{o}, \emph{How to find groups? (and how to use them in
  {E}rd{\"{o}}s geometry?)}, Combinatorica \textbf{32} (2012), no.~5, 537--571.

\bibitem{erdosdist}
P.~Erd\H{o}s, \emph{On sets of distances of $n$ points}, Am. Math. Mon.
  \textbf{53} (1946), 248--250.

\bibitem{guthkatz}
L.~Guth and N.~H. Katz, \emph{On the {Erdos} distinct distance problem in the
  plane}, pre-print, {\url{http://arxiv.org/abs/1011.4105/}}.

\bibitem{harnack}
C.~G.~A. Harnack, \emph{Uber {V}ieltheiligkeit der ebenen algebraischen
  {C}urven}, Math. Ann. \textbf{10} (1876), 189--199.

\bibitem{hartli}
D.~Hart, L.~Li, and C.~Shen, \emph{Fourier analysis and expanding phenomena in
  finite fields}, Proc. Amer. Math. Soc. \textbf{141} (2013), 461--473.

\bibitem{hartshorne}
R.~Hartshorne, \emph{{A}lgebraic {G}eometry}, Springer, 1977.

\bibitem{heintz}
J.~Heintz, \emph{Definability and fast quantifier elimination in algebraically
  closed fields.}, Theoret. Comput. Sci. \textbf{24} (1983), no.~3.

\bibitem{iosevichrnrudnev}
A.~Iosevich, O.~Roche-Newton, and M.~Rudnev, \emph{On an application of
  {G}uth-{K}atz theorem}, pre-print, {\url{/http://arxiv.org/abs/1103.1354/}}.

\bibitem{milnor}
J.~Milnor, \emph{On the {B}etti numbers of real varieties}, Proc. Amer. Math.
  Soc. \textbf{15} (1964), 275--280.

\bibitem{rigid2}
A.~Nixon, J.~C. Owen, and S.~C. Power, \emph{Rigidity of frameworks supported
  on surfaces}, SIAM J. Discrete Math. \textbf{26} (2012), no.~4, 1733--1757.

\bibitem{pachdezeeuw}
J.~Pach and F.~de~Zeeuw, \emph{Distinct distances on algebraic curves in the
  plane}, pre-print, {\url{/http://arxiv.org/abs/1308.0177/}}.

\bibitem{szemtrot}
J.~Pach and M.~Sharir, \emph{On the number of incidences between points and
  curves}, Combinat. Probab. Comput. (1998), 121--127.

\bibitem{pinchasi}
R.~Pinchasi, \emph{The minimum number of distinct areas of triangles determined
  by a set of $n$ points in the plane}, to appear in SIDMA.

\bibitem{rationalintro}
J.~R. Sendra, F.~Winkler, and S.~P\'{e}rez-Diaz, \emph{{R}ational {A}lgebraic
  {C}urves: {A} {C}omputer {A}lgebra {A}pproach}, Springer, 2007.

\bibitem{shafarevich}
I.~R. Shafarevich, \emph{{B}asic {A}lgebraic {G}eometry {V}olume 1: {V}arieties
  in {P}rojective {S}pace}, Springer-Verlag, 1994.

\bibitem{distlines}
M.~Sharir, A.~Sheffer, and J.~Solymosi, \emph{Distinct distances on two lines},
  J. Combinat. Theory A \textbf{120} (2013), 1732--1736.

\bibitem{spendsymm}
T.~Tao, \emph{{S}pending {S}ymmetry}, in preparation, draft available at
  {\url{http://terrytao.wordpress.com/books/spending-symmetry/}}.

\bibitem{thom}
R.~Thom, \emph{Sur l'homologie des vari\'{e}t\'{e}s alg\'{e}briques
  r\'{e}elles}, Differential and Combinatorial Topology (1965), 255--265.

\end{thebibliography}

\end{document}